% June 10, 2022 FV
\documentclass[12pt,reqno]{amsart}
\usepackage{amsmath, amssymb, amsthm}
\usepackage{mathtools}
\usepackage[margin=1in]{geometry}
\overfullrule=5pt

\usepackage{color}
%\usepackage{dsfont}

%%%%%%%%%%%%%
%\usepackage{tikz-cd}

%\usepackage{tikz-cd}
%\usetikzlibrary{arrows}

%\usepackage{hyperref}

%%%%%%%%%%%%%%

%\usepackage{tcolorbox}
%\def\Xint#1{\mathchoice
%{\XXint\displaystyle\textstyle{#1}}
%{\XXint\textstyle\scriptstyle{#1}}
%{\XXint\scriptstyle\scriptscriptstyle{#1}}
%{\XXint\scriptscriptstyle\scriptscriptstyle{#1}}
%\!\int}
%\def\XXint#1#2#3{{\setbox0=\hbox{$#1{#2#3}{\int}$ }
%\vcenter{\hbox{$#2#3$ }}\kern-.6\wd0}}
%\def\ddashint{\Xint=}
%\def\dashint{\Xint-}

%%%%%%%%%

%%%%%

\newcommand{\ord}{\mathrm{ord}}

\newcommand{\wt}{\widetilde}
\newcommand{\blue}{\color{blue}}

\newcommand{\dd}{\mathrm{d}}
\newcommand{\edit}[1]{{\color{red}{$\clubsuit$#1$\clubsuit$}}}
 % THIS IS TO MARK COMMENTS, CHANGES
\newcommand{\Ss}{{\mathbb S}}
\newcommand{\bcs}{\begin{cases}}
\newcommand{\ecs}{\end{cases}}

\newtheorem{theorem}{Theorem}[section]

\newtheorem{proposition}[theorem]{Proposition}
\newtheorem{corollary}[theorem]{Corollary}
\newtheorem{lemma}[theorem]{Lemma}
\newtheorem{remark}[theorem]{Remark}
\newtheorem{definition}[theorem]{Definition}

\numberwithin{equation}{section}

\newcommand{\Q}{{\mathbb Q}}
\newcommand{\Z}{{\mathbb Z}}
%%%%%

\newcommand{\acal}{{\mathcal A}}
\newcommand{\hcal}{{\mathcal H}}
\newcommand{\ical}{{\mathcal I}}

\numberwithin{equation}{section}

\newtheorem{theo}{{\sc Theorem}}

\newenvironment{rem}{\medskip\noindent{\it Remark:\/} }{\medskip}

\newcommand{\R}{{\mathbb R}}

\newtheorem*{main-theorem}{Main Theorem}
\newtheorem*{old-thm}{Theorem}
\theoremstyle{definition}
\numberwithin{equation}{section}

\newcommand{\fcal}{{\mathcal F}}
\def\11{\mathds{1}}

\def\Re{\,\mathrm{Re}\,}
\def\Im{\,\mathrm{Im}\,}

\def\supp{\mathrm{supp}\,}

\def\SS{{\mathbb S}}

\def\phi{\varphi}
\def\half{{\frac{1}{2}}}

\def\be{\begin{eqnarray*}}
\def\ee{\end{eqnarray*}}
\def\ben{\begin{eqnarray}}
\def\een{\end{eqnarray}}
\def\ker{\text{ker}}

\def\L2R{L_{\text{Rest}}^2}

\newcommand{\N}{{\mathbb N}}
\newcommand{\ccal}{\mathcal{C}}

\newcommand{\ecal}{\mathcal{E}}
\newcommand{\gcal}{\mathcal{G}}

\newcommand{\pcal}{\mathcal{P}}

\newcommand{\ocal}{\mathcal{O}}

\newcommand{\scal}{\mathcal{S}}

\newcommand{\Char}{\operatorname{Char}}

%%%%%
\begin{document}
\title[]{Restriction of eigenfunctions to totally geodesic submanifolds}

\author{Steve Zelditch}
\address{Department of Mathematics, Northwestern University, Chicago IL}
\email{s-zelditch@northwestern.edu}

\thanks{Research partially supported by NSF grant DMS-1810747}

\begin{abstract}  We prove a number of results on the Fourier coefficients $\langle \gamma_H \phi_j, e_k \rangle_{L^2(H)}$  of restrictions $\gamma_H \phi_j$ of Laplace eigenfunctions $\phi_j$ of eigenvalue $-\lambda_j^2$
of a compact Riemannian manifold $(M,g)$ of dimension $n$   relative to the eigenfunctions $\{e_k\}$ of eigenvalues $-\mu_k^2$ of a totally geodesic submanifold $H$ of dimension $d$. The results pertain to the  `edge case' $c=1$ where $|\mu_k - \lambda_j| \leq \epsilon$ for some $\epsilon  >0$   of  Kuznecov-Weyl sums $$N^1_{\epsilon, H}(\lambda)  =  \sum_{j,  \lambda_j \leq \lambda}  \sum_{k: | \mu_k -   \lambda_j| \leq \epsilon}   \left| \int_{H} \phi_j \overline{e_k}dV_H \right|^2.$$  We prove a universal  asymptotic formula  $N_{\epsilon, H}^{1} (\lambda) \sim
C_{n,d} \;\; a_{I} ^0(H, \epsilon) \lambda^{\frac{n+d}{2}   } $,  together with  universal estimates on the remainder and on jumps in   $N_{\epsilon, H}^{1} (\lambda)$. The growth of the Kuznecov-Weyl sums depends on $d = \dim H$, in contrast to    the  ``bulk cases" where $ | \mu_k - c   \lambda_j| \leq \epsilon, 0 < c < 1$, where the order of growth is $\lambda^{n-1}$ for submanifolds of any dimension (as 
shown  by Y. Xi, E. Wyman and the author).

\end{abstract} 

\maketitle

\tableofcontents

\section{Introduction}

Let   $(M, g)$ be  an $n$-dimensional  compact Riemannian manifold without boundary, let $\Delta_M = \Delta_g$ denote its  Laplacian, and let $\{\phi_j\}_{j=1}^{\infty} $ be an orthonormal basis of its eigenfunctions,
$$(\Delta_M + \lambda_j^2) \phi_j  = 0, \qquad \int_M \phi_j \overline{\phi_k} dV_M = \delta_{jk}, $$ 
where $dV_M$ is the volume form of $g$, and where the eigenvalues are enumerated in increasing order, 
$\lambda_0 =0 < \lambda_1 \leq \lambda_2 \cdots \uparrow \infty.$
Let  $H \subset M$ be a $d$-dimensional  embedded totally geodesic  submanifold with induced metric $g |_{H}$ with
volume form $dV_H$, let $\Delta_H$ denote the Laplacian of $(H, g |_H)$, and let
 $\{e_k\}_{k=1}^{\infty} $ be an orthonormal basis of its eigenfunctions on $H$,
$$(\Delta_H + \mu_k^2) e_k = 0, \qquad \int_H e_k \overline{e_j} dV_H = \delta_{jk}. $$ 
The purpose of this article is to study the  Fourier coefficients in the  eigenfunction expansion of the restriction
 $\gamma_H \phi_j = \phi_j |_H$ of $\phi_j$ to $H$,
\begin{equation} \label{FC}
\gamma_H \phi_j (y) = \sum_{k=1}^{\infty} \langle \gamma_H \phi_j, e_k \rangle_H\; e_k(y), \;\;  \langle \gamma_H \phi_j, e_k \rangle_{L^2(H)} : = \int_H \phi_j(y) \overline{e}_k(y) dV_H,
\end{equation}  
in the `edge case' where $\mu_k \simeq \lambda_j$ in the sense made precise below. 

Ideally, we would like to obtain asymptotics or estimates on the Fourier coefficients of individual eigenfunctions.  In special cases this is possible, e.g.
for closed geodesics on certain surfaces. But in general the Fourier coefficients may vary erratically as $\lambda_j$  varies or as $\mu_k$ varies, and 
to obtain asymptotics of Fourier coefficients  it is usually necessary  to study averages of squares of Fourier coefficients 
 in both the $\mu_k$ and $\lambda_j$ spectral parameters.  
We therefore study  thin window  or ``ladder Kuznecov sums'' in the sense of \cite{WXZ21},  \begin{equation} \label{cpsi}
  \left\{ \begin{array}{l}  
 N^{c} _{\psi, H  }(\lambda): = 
\sum_{j,  \lambda_j \leq \lambda}  \sum_{k=0}^{\infty}  \psi(c \lambda_j - \mu_k)  \left| \int_{H} \phi_j \overline{e_k}dV_H \right|^2,
 \\ \\
 N^{c} _{\epsilon, H  }(\lambda):= 
\sum_{j,  \lambda_j \leq \lambda}  \sum_{k: |c  \lambda_j - \mu_k| \leq \epsilon}   \left| \int_{H} \phi_j \overline{e_k}dV_H \right|^2, \end{array} \right.
 \end{equation}
 where  the test function $\psi \in \scal(\R)$ (Schwartz class).  It is shown in \cite{WXZ21} that the sums decay rapidly if $c > 1$. 
 The  asymptotics  for $c=0$ were first determined in \cite{Zel92} and those are those for $0 \leq c < 1$ are determined in 
 \cite{WXZ21} for any submanifold. In this article, and its sequel \cite{Z22}, the asymptotics are studied for the `edge case' $c=1$
 in the case where $H$ is a totally geodesic submanifold. It turns out that even the order of growth of the sums \eqref{cpsi}  are quite different from the case $c < 1$ and depend on $d = \dim H$. This is because  the edge Fourier coefficients  often have enhanced magnitudes compared to `bulk' Fourier coefficients with $c < 1$. The enhancement depends on the second fundamental
form of $H$, and is largest when $H$ is totally geodesic. The order of growth of the sums is smaller when $H$ has non-degenerate 
second fundamental form, and that case is not considered here.  We refer to Section \ref{COMPSECTINTRO} and Section \ref{COMPSECT} for a comparison of the 
cases $c < 1$ and $c=1$.

\begin{remark} The `ordering' $\lambda_j - \mu_k$ in \eqref{cpsi} and  \eqref{SpsiDEF} is important, and is adhered
to throughout, because it will
imply that asymptotically the argument of $\psi$ is positive. \end{remark}

 The Kuznecov sums \eqref{cpsi}  involve two types of localization: (i) localization of
 $\phi_j$ along $H$; and (ii) Fourier localization of $\phi_j |_H$ to a thin window of Fourier modes on $H$ of frequencies $\mu_j$ close to $\lambda_j$.  In \cite{WXZ21} the first sum in \eqref{cpsi} is called `smooth-sharp' since it involves the smoothed inner sum with $\psi$; the second sum is called `sharp-sharp' since it involves indicator functions in both
 $\lambda_j$ and in $\mu_k$.  The smoothed sum $ N^{1} _{\psi, H  }(\lambda)$ is technically simpler to work with
 and, perhaps surprisingly, often has better applications. Indeed, the inner sum of the sharp-sharp average can jump at certain $\epsilon$, as discussed in \cite{WXZ21}.  
  
 To state  first result we need some notation. By the injectivity radius $\rm{inj}(M,g)$ we mean the largest $R > 0$
 so that $\exp_x : B_x(R) \to M$ is a diffeomorphism to its image for all $x \in M$; $B_x(R) \subset T_x M$ denotes
 the ball of radius $r$ in the tangent space.  
 We say that the geodesic flow $G_X^t$ of any Riemannian manifold $(X,g)$ is `aperiodic'  if the set of closed geodesics of $(X, g)$  has Liouville measure zero in $S^*X$. We denote the volume form in geodesic coordinates $y = \exp_x \xi$ based at $x \in M$
 by $\Theta(x, y) dy. $ 
If we change
to geodesic  polar coordinates $(r, \omega)$, we get $dV_g = J(r, \omega) dr d \omega$ where \begin{equation} \label{VOLDEN} J(r, \omega) = r^{n-1} \Theta(r, \omega, y)
=  ||V_1(r) \wedge \cdots \wedge V_{n-1}(r) \wedge \frac{\partial}{\partial r} || \end{equation}
where $\Theta(r, \omega)= \Theta(x, y)$ when $x = (r, \omega)$. 
%Above, we use the  well-known formula that
%$J(r, \omega) = ||V_1(r) \wedge \cdots \wedge V_{n-1}(r) \wedge \frac{\partial}{\partial r} ||$
Here, $V_j$ is a basis of vertical Jacobi fields along the geodesic with direction $\omega$ and initial point $x$, i.e. Jacobi
fields satisfying $V_j(0) = 0$ and $\frac{DV_j}{D t}(0)$ is an orthonormal basis of the normal space to the geodesic. Also,
$\frac{\partial}{\partial r} $ is the unit tangent vector to the geodesic. For  example, on the standard sphere $\Ss^n$,  $J(r, \omega) =\sin^{n-1} r$.

 We assume throughout that $d = \dim H \geq 1$. If $d= 0$ and $H = \{x_0\}$ is a single point, then 
the   eigenfunctions of $H$ are constants, the only Fourier coefficient \eqref{FC} is the pointwise square $|\phi_j(x_0)|^2$, and
the Kuznecov-Weyl asymptotics of \eqref{cpsi}   reduce to pointwise Weyl law results (see \cite{DG75, HoIV, SV} for background).  The main result 
is a generalization of such pointwise Weyl laws to higher dimensional submanifolds.

\begin{theorem} \label{main 1}
Let $\dim M = n$ and let $\dim H =d \geq 1$.  Assume that $H$ is totally geodesic. Let $\hat{\psi} \in C_0^{\infty}(\R)$ be a real, positive, even test function supported   in the set $(-r_0, r_0)$ where  $r_0 < \rm{inj}(M,g)$. Then,  there exist universal constants $C_{n,d}$ such that for any $\epsilon > 0$,
$$N_{\psi, H}^{1} (\lambda) =
C_{n,d} \;\;   a^0_1(H, \psi) \; \lambda^{\frac{n+d}{2}   } + R_{\psi, H}^{1} (\lambda), \;\; \rm{where}\; R_{\psi, H}^{1} (\lambda) =   O(\lambda^{\frac{n+d}{2} -1}).$$
The leading coefficient is given by,
\begin{equation} \label{acHpsi1}  a^0_1(H, \psi): =  \; \int_{\R} \int_H \int_{S_q^*H}  \hat{\psi} (s) \;(s + i 0)^{- \frac{n-d}{2}} 
\Theta^{-\half }_M(q,\exp_q s \omega)  \Theta^{-\half}_H(q, \exp_q s \omega) ds dV_H(q) d S(\omega). \end{equation}

If the geodesic flow $G^t_H$ of $H$ is aperiodic, then 
\begin{equation} \label{APERIODIC} R_{\psi, H}^{1} (\lambda) =   o(\lambda^{\frac{n+d}{2} -1}). \end{equation}

\end{theorem}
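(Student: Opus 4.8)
The strategy is the standard Fourier-integral-operator approach to Kuznecov–Weyl sums, adapted to the edge case $c=1$. The key object is the smoothed spectral sum, which I would realize as the trace of a composition of operators: restriction to $H$, multiplication by a test function in the spectral parameter $\mu_k$, and restriction of the adjoint back to $H$. Concretely, define the operator on $L^2(H)$
\[
\mathcal{K}_\psi(\lambda) := \sum_j \mathbf 1_{[0,\lambda]}(\lambda_j)\, \psi(\lambda_j - \sqrt{-\Delta_H})\,(\gamma_H \phi_j)\otimes \overline{(\gamma_H\phi_j)},
\]
so that $N^1_{\psi,H}(\lambda) = \Tr_{L^2(H)} \mathcal{K}_\psi(\lambda)$. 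Writing $\psi = \widehat{\hat\psi}$ and using the wave group $U_M(t)=e^{it\sqrt{-\Delta_M}}$ on $M$ together with $U_H(s)=e^{is\sqrt{-\Delta_H}}$ on $H$, this trace becomes an oscillatory integral over $(t,s)\in \R^2$ against $\hat\psi(s)$ (times a Fourier-Tauberian cutoff in $t$ coming from $\mathbf 1_{[0,\lambda]}$) of the kernel
\[
\int_H\int_H \gamma_H U_M(t)\gamma_H^*(y,y')\, U_H(s)(y',y)\, dV_H(y)\,dV_H(y').
\]
The first step is therefore to identify $\gamma_H U_M(t) \gamma_H^*$ as a Fourier integral operator on $H$: this is the pullback of the wave kernel on $M$ to $H\times H$, and because $H$ is \emph{totally geodesic} its canonical relation is controlled by the geodesic flow of $M$ restricted to $S^*H$ — precisely the flowout of the conormal geometry that makes $c=1$ the borderline case.

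The second step is the stationary-phase / normal-form analysis. After restricting to $t$ near $0$ via $\hat\psi$'s support condition $r_0 < \mathrm{inj}(M,g)$, the $M$-wave kernel on the diagonal neighborhood is given by the Hadamard parametrix, whose amplitude carries the volume density factors $\Theta_M$. Restricting to $H\times H$, the phase in the $(t,s,\xi)$ variables (with $\xi$ the cotangent fiber variable) has a critical manifold along which $t$ and $s$ are linked by the requirement that an $M$-geodesic of length $\simeq$ the relevant parameter, issued tangent to $H$, return near $H$; since $H$ is totally geodesic, tangent geodesics stay in $H$, so the critical set is large and the effective phase degenerates in exactly the way that produces the factor $(s+i0)^{-(n-d)/2}$ — this is the homogeneous distribution coming from integrating out the $n-d$ normal cotangent directions. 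Carrying out stationary phase in the fiber variables and in $t$ (Fourier–Tauberian in $\lambda$) yields the leading term $C_{n,d}\, a^0_1(H,\psi)\,\lambda^{(n+d)/2}$ with the integral formula \eqref{acHpsi1}, the two half-powers of $\Theta_M$ and $\Theta_H$ arising from the $M$- and $H$-Hadamard amplitudes respectively.

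The third step is the remainder estimate. The $O(\lambda^{(n+d)/2-1})$ bound is the generic Fourier-Tauberian loss of one power, obtained from the standard argument that the contribution of $|t|$ bounded away from $0$ is $O(\lambda^{(n+d)/2-1})$ uniformly, using only that $\hat\psi$ is compactly supported away from the global return times of $M$-geodesics tangent to $H$ (guaranteed by $r_0 < \mathrm{inj}(M,g)$), plus the non-degeneracy of the stationary phase in the small-$t$ regime. For the improvement to $o(\lambda^{(n+d)/2-1})$ under the aperiodicity hypothesis on $G^t_H$: here one must push the $t$-cutoff out to all times and show the large-$t$ contribution is $o(\lambda^{(n+d)/2-1})$. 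The relevant singularities now sit over the periodic set of the $H$-geodesic flow — because the canonical relation of $\gamma_H U_M(t)\gamma_H^*$, intersected with the diagonal of $S^*H$ and propagated by $U_H(s)$, contributes a singularity only when the $H$-geodesic flow is periodic on that point; aperiodicity of $G^t_H$ (Liouville-null periodic set in $S^*H$) makes this contribution a measure-zero set, and a standard Duistermaat–Guillemin / Safarov cosine-transform argument with a partition of unity in $t$ upgrades the $O$ to $o$. The main obstacle I anticipate is precisely the bookkeeping in the second step: tracking the degenerate stationary phase that produces $(s+i0)^{-(n-d)/2}$, verifying that the density factors combine exactly as in \eqref{acHpsi1}, and confirming that the sub-principal contributions are genuinely lower order; getting the constant $C_{n,d}$ right (and seeing it is universal, independent of $(M,g)$ and of $\epsilon$) requires a careful model computation, most cleanly checked against $M=\Ss^n$, $H=\Ss^d$ where $\Theta_M=\Theta_H\equiv$ the explicit $\sin$-Jacobian, or against the known $d=0$ pointwise Weyl law.
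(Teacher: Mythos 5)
Your outline follows the paper's route in its broad architecture (wave trace of $\gamma_H U_M(t)\gamma_H^*$ composed with $U_H(s)$, Hadamard parametrix for the densities $\Theta_M^{-\half}\Theta_H^{-\half}$, Tauberian theorem for the sharp count, long-time analysis for the aperiodic improvement), but there is a genuine gap at the heart of your second step. You treat the appearance of $(s+i0)^{-\frac{n-d}{2}}$ as stationary-phase bookkeeping in the fiber variables. At $s=0$, however, the phase is not even clean: in the coordinates of Lemma \ref{UNCLEAN2} the normal block of the Hessian is $-|y|I_{n-1}$, so the rank drops by $n-d$ on the diagonal of $H\times H$, the stationary-phase expansion is not uniform down to $y=0$, and the wavelength-scale region $|y|\lesssim \lambda^{-1}$ contributes a priori a \emph{larger} power of $\lambda$ than the claimed main term (exactly as the asymptotics of $(\lambda r)^{-\frac{n-2}{2}}J_{\frac{n-2}{2}}(\lambda r)$ are valid only for $\lambda r\to\infty$; cf. Section \ref{BESSELSECT}). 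Hence stationary phase alone neither identifies the leading coefficient nor determines the regularization of $s^{-\frac{n-d}{2}}$ at $s=0$, nor does it rule out an additional term supported at $s=0$. The paper resolves this with a hybrid argument: stationary phase only in the variables where the Hessian is uniformly nondegenerate (Lemma \ref{NONDEGd-1}), an exact Fourier-transform evaluation in the distinguished variable and the normal radial variable in the model integral of Section \ref{MODELSECT}, and then an a priori positivity and support argument (Lemma \ref{a10LEM}) showing the correct boundary value in \eqref{acHpsi1} is $(s+i0)^{-\frac{n-d}{2}}$ with no component concentrated at $s=0$. Some argument of this kind is indispensable; without it the step you yourself flag as ``bookkeeping'' is precisely where the proof fails.

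A secondary point: for the aperiodicity statement \eqref{APERIODIC} you invoke a ``standard'' Duistermaat--Guillemin/Safarov argument, but since cleanliness of the fixed-point sets of $G^t_H$ cannot be assumed, the paper instead introduces pseudodifferential cutoffs $B_T$, $b_T$ on $L^2(H)$ adapted to the Liouville measure of the periodic set, estimates the diagonal and off-diagonal terms of the resulting four-fold decomposition separately before applying the Tauberian theorem, and additionally verifies that the subprincipal term vanishes (Section \ref{SUBPRINCIPAL}); the $o(\lambda^{\frac{n+d}{2}-1})$ conclusion requires both ingredients, not merely a partition of unity in $t$.
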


In the sequel \cite{Z22}, we study two term asymptotics of \eqref{cpsi}, which give necessary conditions to obtain maximal growth of individual terms
(see Section \ref{2TERMINTRO} for further remarks).  The last statement for aperiodic flows is a two-term asymptotic in which the second term
of order $\lambda^{\frac{n+d}{2} -1}$ vanishes. The proof that it vanishes is the same as in \cite[Section 5.3]{WXZ21} but will be reviewed in 
Section \ref{SUBPRINCIPAL}.
Before stating  further results,  or discussing the proof of Theorem \ref{main 1}, we address some issues in both the assumptions and conclusions
of the theorem  which explain the organization and the length  of this article. 

Note that when $d = n-1$, the density $(s + i 0)^{- \frac{n-d}{2}} $ is integrable and there is no need for regularization. When $d < n-1$,
the asymptotics of Theorem \ref{main 1} 
are somewhat subtle  because the  contribution of very short (wave-length scale) distances in $s$ is  larger than the asymptotics in Theorem \ref{main 1},
i.e. substantial cancellation is required in the integral over $\rm{supp} \hat{ \psi}$  to produce the relatively small order of growth $\lambda^{\frac{n+d}{2}}$.
The unusual leading coefficient \eqref{acHpsi1} signals the existence of a `blow-down' singularity in the
oscillatory integrals used in the proof of Theorem \ref{main 1}. The geometric origin of the blow-down singularity
is simple to understand:  it is due to the collapse of distance spheres on $H$ when $s=0$ (see Section \ref{BIANGLESECT} and Section \ref{BDSECT}). This singularity does not occur for $c < 1$ in \cite{WXZ21}. It indicates that 
the asymptotics in Theorem \ref{main 1} cannot be deduced purely from the Fourier integral operator theory
under clean compositions, or equivalently, by a stationary phase analysis. To prove the formula for the leading term, we use stationary phase outside
of a small interval around $s=0$ and then reduce to a universal model integral for the short distance part (Section \ref{MODELSECT}). 
It is    proved in Section \ref{FULLAMP} that the procedure used in the model case extends to give the result for the general
case, completing the proof of the leading term and remainder in Theorem \ref{main 2} but without calculating the full amplitude; that is done
in Section \ref{HADPAR}.
 It is simple to obtain the coefficient under the (rather artificial)  assumption that $\hat{\psi} =0$ in some interval
$[-\epsilon, \epsilon]$ around $s=0$, but not so simple to determine the regularization or to show that there does
not exist another term supported at $s =0$, as for $c < 1$ in \cite{WXZ21}.  
The evaluation of the coefficient \eqref{acHpsi1} is given in   Section \ref{SINGINTRO} and is indirect: Once the existence of the
asymptotic expansion is proved, it  is fairly obvious that $\psi \to a_1^0(H, \psi)$ 
is a positive distribution, hence a positive measure. Due to  the ordering $\lambda_j - \mu_k \geq 0$ it is supported in $\R_+$. 
As a result,  the  Fourier transform of the distribution  \eqref{acHpsi1}  has
a holomorphic extension to the upper half plane, as indicated in \eqref{acHpsi1}.

The most computable examples corroborating the leading order term and proving sharpness of the remainder are that of totally geodesic 
spheres $\Ss^d \subset \Ss^n$ of standard spheres,   subspaces of $\R^n$ (Section \ref{BESSELSECT}) and  non-degenerate closed geodesics  of any Riemannian manifold $(M,g)$. In \cite{Z22},  the asymptotics are derived by Gaussian beam techniques.

A second  aspect of the asymptotics is due to the dependence on the test function $\psi$. In special cases, again such as
 $\Ss^d \subset \Ss^n$, the eigenvalue differences $\lambda_j - \mu_k$ are essentially differences $N - M$ of positive integers. This raises
the general  question (for any $c \in (0,1])$  of when the ordered difference spectrum, \begin{equation} \label{SigmaHM} \Sigma_{M,H}(c) = \{c\lambda_j - \mu_k\}\end{equation}
is dense in $\R$.
  If instead
of assuming that $\hat{\psi} \in C_0^{\infty}(\R)$ we assumed that $\psi \in C_0^{\infty}(\R)$, we could take $\psi$ supported in a short
interval $[-\half, \half]$ which is a gap in the difference spectrum $N - M$, and for such $\psi$ \eqref{cpsi} would equal zero. Thus,
we see that there are different types of asymptotics problems, some involving $\hat{\psi}$ with small support
around $0$, including wave-length support, and some involving the opposite regime where $\psi$ has small support,
and that different techniques are needed for the different regimes.  The question of when  \eqref{SigmaHM} is dense is reminiscent of the 
Helton clustering theorem; it appears that the remainder term $R^1_{\psi, H}$ is maximal only when the  difference spectrum \eqref{SigmaHM} fails to be dense. We leave these questions to future work.

In this article we assume
 that $\hat{\psi} \in C_0^{\infty}$ and $\rm{Supp} \hat{\psi} \subset (-r_0, r_0)$. 
The assumption that  $\hat{\psi} \in C_0^{\infty}$ often arises in a Fourier integral analysis of spectral problems to 
limit the number of singularities of the dual sum,
\begin{equation}\label{SpsiDEF}
\begin{array}{lll}
S(t, \psi): & = &  \sum_{j,k} e^{it \lambda_j } \psi (\lambda_j - \mu_k)  \left| \int_{H} \phi_j \overline{e_k}dV_H \right|^2.  \end{array}
\end{equation}  By studying the  wave equations of $(M,g)$ and of $(H, g |_H)$, one finds that for 
$\hat{\psi} \in C_0^{\infty}(\R)$ the leading order term \eqref{acHpsi1} involves the times $t$ of periodic orbits of the geodesic flow
$G_M^t$ of $(M,g)$ and all times $s$, including  the times $s$ of periodic orbits, of the geodesic flow of $G_H^s$ of $(H, g|_H)$ (see Section \ref{WFSECT}). 
It is evident from the factor $\Theta^{-\half }_M(q,\exp_q s \omega)  \Theta^{-\half}_H(q, \exp_q s \omega)$ in \eqref{acHpsi1} that the distribution  $ a^0_1(H, \psi) $  for general $\hat{\psi} \in C_0^{\infty}$ depends on the structure of periodic orbits and conjugate points.  The formula
\eqref{acHpsi1}  uses the assumption that $\hat{\psi}$ is supported in the interval $(-r_0, r_0)$, since the volume densities
$\Theta_M(q, q')$, resp. $\Theta_H(q,q')$ become singular when $q'$ is
a conjugate point of $q$. The formula suggests that the leading coefficient is valid for any $\hat{\psi} \in C_0^{\infty}$ if one suitably 
regularizes the integrand at conjugate points. For instance, in the case of a totally geodesic subsphere $\Ss^d \subset \Ss^n$, one has the
global in $s$ formula,
\begin{equation} \label{a1SPHERE}  a^0_1(\Ss^d, \psi) = \int_{\R} \hat{\psi}(s) (\sin (s + i0))^{- \frac{n-d}{2}}  ds. \end{equation} 

The additional assumption that $\rm{Supp} \; \hat{\psi}\; \subset (-r_0, r_0)$ ensures that there is only one singularity of \eqref{SpsiDEF}, namely at $t=0$, and it does not involve any periods or conjugate points of the geodesic
flow on $H$. This is already sufficient to prove one term asymptotics
with a sharp remainder.    If we allow $\psi \in C_0^{\infty}(\R)$, the Fourier integral operator techniques of
this article would require a  sum over all periods of the geodesic flows and the test function $\hat{\psi}(s)$ would have to have a compensating decay.   This is far from saying that the case of $\psi \in C_0^{\infty}(\R)$ is uninteresting. Such high localization in the difference 
spectrum is often of the highest interest in applications. But it explains why we do not consider that regime in this article.For further discussion of the singularity of \eqref{acHpsi1} we refer to Section \ref{SINGINTRO}.

Two final notation remarks.
First, the  constant $C_{n,d}$ is  computed from the Hadamard parametrix method
in \cite{Be} (see Section \ref{HADPAR}).  The formula for \eqref{acHpsi1}  requires determining the precise regularization
of the distribution $s^{- \frac{n-d}{2}}$ at $s=0$.  We often make use of the relations \cite[Page 93 and Page 172]{GS},

\begin{equation}\label{GS} \left\{\begin{array}{l} (s \pm i 0)^{- \frac{n-d}{2}} = s_+^{- \frac{n-d}{2}} + e^{\mp i \pi  \frac{n-d}{2}} s_-^{- \frac{n-d}{2}} = s_+^{- \frac{n-d}{2}} + (\mp i)^{n-d} s_-^{- \frac{n-d}{2}}  ,\\ \\ \int_0^{\infty} e^{i t \sigma} t_+^{\lambda} d t
= i e^{i \lambda \pi/2} \Gamma(\lambda + 1) (\sigma + i
0)^{-\lambda - 1} \end{array}  \right.
\end{equation}
which explain the origins of the regularization and of the normalizing constants $C_{n,d}$.   Second, we use the convention that the
propagator of an operator $P$  by $U(t) = e^{it P}$ (rather than $e^{- it P}$) and always integrate it against $e^{- it \lambda}$.

 \subsection{\label{INTROJUMPSECT} Jumps in the Kuznecov-Weyl sums}

Theorem \ref{main 1} concerns asymptotics of  a double average \eqref{cpsi}, an outer average over the eigenvalues $\lambda_j$ of $\sqrt{-\Delta}_M$ and an inner average over
the eigenvalues $\mu_k $ of $\sqrt{-\Delta}_H$.  
One may  obtain results on  eigenfunctions of individual eigenvalues to some extent by studying  the jumps of the  Kuznecov-Weyl sums \eqref{cpsi} at the eigenvalues $\lambda_j$,
weighted by either smooth test functions  $\psi$ or sharp indicator functions:
\begin{equation} \label{JDEFpsi}  \left\{ \begin{array}{l} J^{1} _{ \psi, H  }(\lambda_j): =   \sum_{\ell: \lambda_{\ell} = \lambda_j} 
\sum_{ k}  \psi( \lambda_j -  \mu_k)  \left| \int_{H} \phi_{\ell} \overline{e_k}dV_H \right|^2, \\ \\
 J^{1} _{ \epsilon, H  }(\lambda_j): = \sum_{\ell: \lambda_{\ell} = \lambda_j} 
\sum_{  \substack{k: | \lambda_j - \mu_k|  \leq \epsilon}}   \left| \int_{H} \phi_{\ell} \overline{e_k}dV_H \right|^2.

\end{array} \right.\end{equation}
The sum over $\ell$ is a sum over an orthonormal basis for the eigenspace $\hcal(\lambda_j)$ of $\Delta_M$ of eigenvalue
$\lambda_j$.  If the $M$-eigenvalues have multiplicity one, then there is a single term in the $\lambda_j$ sum.  On the other hand,
unless the $H$-eigenvalues come in clusters with high multiplicities and  are  separated by gaps, the $\mu_k$-sum runs over
roughly $\ocal(\lambda_j^{d-1})$ eigenvalues. We refer to the sums of  \eqref{JDEFpsi} as the `inner sums'. 

Theorem \ref{main 1} implies the following universal bounds on remainders and jumps.

%From Theorem \ref{main 1} and \eqref{JUMPREM}, we obtain, 
 \begin{corollary} \label{theo JUMP}  With the same assumptions and notations as in Theorem \ref{main 1},  for
any postive test function $\psi$ with $\hat{\psi} \in C_0^{\infty}(\R)$ having small support,  there exists a constant $C_{\psi}  > 0$ such that $$
%\left\{ 
\begin{array}{ll} (i) &
 J_{\psi, H}^{1} (\lambda) \leq	C_{\psi} \; \lambda^{\frac{n + d}{2} -1} 
 %\\  & \\(ii) & 
 %J^{1}_{\epsilon, H}(\lambda_j) \leq	C_{\epsilon} \; \lambda^{\frac{n + d}{2} -1}
 . \end{array}
 % \right.
  $$
  Moreover, for any $\epsilon > 0$ there exists $C_{\epsilon} > 0$ so that
 $$ \begin{array}{ll} (ii) &
 J^{1}_{\epsilon, H}(\lambda_j) \leq	C_{\epsilon} \; \lambda^{\frac{n + d}{2} -1}
 . \end{array}  $$
 In the aperiodic case, 
 \begin{equation} \label{JUMPAP} J_{\psi, H}^c(\lambda) = o(\lambda^{\frac{n + d}{2} -1}). \end{equation}
\end{corollary}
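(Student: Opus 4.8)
The plan is to derive Corollary~\ref{theo JUMP} directly from the one-term asymptotic in Theorem~\ref{main 1}, exploiting only two soft facts: the leading term $C_{n,d}\,a^0_1(H,\psi)\,\lambda^{\frac{n+d}{2}}$ is a continuous function of $\lambda$, and the positivity of $\psi$ makes every summand in \eqref{cpsi} and \eqref{JDEFpsi} nonnegative.

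First I would record that, because $\psi>0$, the counting function $\lambda\mapsto N^1_{\psi,H}(\lambda)$ is non-decreasing and right-continuous, and its jump at a point $\lambda$ equals
\[
N^1_{\psi,H}(\lambda)-\lim_{\mu\uparrow\lambda}N^1_{\psi,H}(\mu)
=\sum_{\ell:\lambda_\ell=\lambda}\ \sum_k\psi(\lambda-\mu_k)\Big|\int_H\phi_\ell\overline{e_k}\,dV_H\Big|^2
=J^1_{\psi,H}(\lambda),
\]
which vanishes unless $\lambda$ is a Laplace eigenvalue of $M$. Writing $N^1_{\psi,H}(\lambda)=C_{n,d}\,a^0_1(H,\psi)\,\lambda^{\frac{n+d}{2}}+R^1_{\psi,H}(\lambda)$ and subtracting the continuous leading term, this jump equals $R^1_{\psi,H}(\lambda)-\lim_{\mu\uparrow\lambda}R^1_{\psi,H}(\mu)$, so
\[
0\le J^1_{\psi,H}(\lambda)\le 2\sup_{\mu\in[\lambda-1,\lambda]}\big|R^1_{\psi,H}(\mu)\big|,
\]
which is $O(\lambda^{\frac{n+d}{2}-1})$ by the remainder estimate of Theorem~\ref{main 1}; this gives (i), with $C_\psi$ the constant hidden in that estimate. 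For (iii), aperiodicity of $G^t_H$ upgrades $R^1_{\psi,H}(\lambda)$ to $o(\lambda^{\frac{n+d}{2}-1})$ by \eqref{APERIODIC}, and the same inequality yields $J^1_{\psi,H}(\lambda)=o(\lambda^{\frac{n+d}{2}-1})$.

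For (ii) I would fix $\epsilon>0$ and choose a nonnegative majorant $\psi_\epsilon$ of $\mathbf{1}_{[-\epsilon,\epsilon]}$ with $\widehat{\psi_\epsilon}\in C_0^\infty(\mathbb R)$ supported in $(-r_0,r_0)$ and $\psi_\epsilon\ge 1$ on $[-\epsilon,\epsilon]$. Such a function exists: for instance a suitably normalized finite sum of dilated and translated Fej\'er-type kernels $(\sin(tx)/x)^2$ with $t<r_0/2$, each nonnegative with Fourier transform compactly supported in $(-r_0,r_0)$ (positivity and the band-limit being preserved under the operations involved), the normalizing constant chosen to exceed $1$ throughout $[-\epsilon,\epsilon]$. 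Since $\mathbf{1}_{[-\epsilon,\epsilon]}\le\psi_\epsilon$ pointwise and all summands are nonnegative, termwise comparison gives $J^1_{\epsilon,H}(\lambda_j)\le J^1_{\psi_\epsilon,H}(\lambda_j)\le C_{\psi_\epsilon}\lambda_j^{\frac{n+d}{2}-1}$ by (i), so (ii) holds with $C_\epsilon=C_{\psi_\epsilon}$.

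The whole argument is soft given Theorem~\ref{main 1}. The only point needing care is the construction in (ii) of a nonnegative Paley--Wiener majorant of $\mathbf{1}_{[-\epsilon,\epsilon]}$ with Fourier support inside a prescribed small interval; there it is essential that Theorem~\ref{main 1} only bounds $\mathrm{supp}\,\widehat\psi$ from above, so no lower bound on the support is imposed and the construction is unobstructed. I do not anticipate any other difficulty.
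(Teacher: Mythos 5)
Your proposal is correct and follows essentially the same route as the paper: (i) and the aperiodic statement come from the jump identity $J^1_{\psi,H}(\lambda_j)=R^1_{\psi,H}(\lambda_j)-R^1_{\psi,H}(\lambda_j-0)$ (the leading term being continuous) together with the remainder bounds of Theorem \ref{main 1}, and (ii) comes from majorizing $\mathbf{1}_{[-\epsilon,\epsilon]}$ by a nonnegative test function with $\hat\psi\in C_0^\infty$ supported in $(-r_0,r_0)$ and comparing termwise. The only cosmetic difference is your explicit Fej\'er-kernel majorant, where the paper instead squares a Schwartz function with band-limited Fourier transform and rescales, following \cite[Lemma 2.3]{DG75} — the same construction in substance.
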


The first statement  follows from the  standard observation that,\begin{equation} \label{epjump} \begin{array}{lll} J_{\psi, H}^1 (\lambda_j) & = & 
N^1_{\psi, H} (\lambda_j) - N^1_{\psi, H} (\lambda_j - 0) \\&&\\& = & 
R^1_{\psi, H} (\lambda_j) - R^1_{\psi, H} (\lambda_j - 0),   \end{array} \end{equation}
since the leading order term in Theorem \ref{main 1} is continuous. The  remainder bound (i)  thus follows from  Theorem \ref{main 1}. To 
prove (ii) we use (i) and an appropriate choice of $\psi$ (see Section \ref{PFTHEOJUMP}).

 To obtain sharper results, it is necessary to study the long time behavior of geodesic
flows and wave groups of $(M,g)$ and of $(H, g |_H)$.  This is carried out in the sequel \cite{Z22}. 
 The proof of Theorem \ref{main 1} and of the other results of this article mainly  use the `small time' behavior of the geodesic flows $G^t_M$ of $(M, g)$, resp. $G^s_H$ of $(H, g |_H)$. An exception is the statement
\eqref{APERIODIC}, which follows from the wave front set analysis in Section \ref{WFSECT}, analogous
to the wave front analysis in \cite{WXZ21}. In the short time analysis we employ both the H\"ormander parametrix (Section \ref{HORPAR}) and the Hadamard parametrix method (Section \ref{HADPAR}).

\subsection{\label{COMPSECTINTRO}  Comparison to $c < 1$}
Let us compare Theorem \ref{main 1} to the analogous result  \cite[Theorem 1.1]{WXZ21} in the case  $c < 1$. In that case,  \begin{equation}
\label{c<1} N_{\psi, H}^{c} (\lambda) =
% \left\{ \begin{array}{ll}
 C_{n,d} \;\; a_c^0(H, \psi) \lambda^{n-1 } +   O(\lambda^{n-2}), 
 %& 0 < c < 1 \\ &\\C_{n,d} \;a_c^0(H, \psi) \;  \lambda^{\frac{n+d}{2}} +   O(\lambda^{\frac{n+d}{2}-1}), & c=1,\ H \; {\rm is \; totally\; geodesic}. \end{array} \right.
 \end{equation}
where the leading coefficient is given by the temperate  distribution,
$$ a_c^0(H, \psi): = 
%\left\{ \begin{array}{ll}
 \hat{\psi}(0) \; c^{d-1} (1 - c^2)^{\frac{n-d-2}{2}}  \hcal^{d}(H)$$
 if $\hat{\psi}$ is supported in a sufficiently small interval around  $s=0$. 
Note that  the power of $\lambda$  in  the $c < 1$ case  is independent of $d = \dim H$. To bridge the two results, note that 
if $c = 1 - \frac{\epsilon}{\lambda}$ then
$  (1 - c)^{\frac{n-d-2}{2}}  =  (\frac{\epsilon}{\lambda})^{\frac{n-d-2}{2}}  = r(\epsilon) \lambda^{- ^{\frac{n-d-2}{2}}}, $
and 
$  (1 - c)^{\frac{n-d-2}{2}} \lambda^{n-1} = r(\epsilon) \lambda^{- ^{\frac{n-d-2}{2}}}  \lambda^{n-1} = r(\epsilon)  \lambda^{\frac{n+d}{2}}. $

If $\hat{\psi} = 0$ near $s =0$, then the order of magnitude of $N_{\psi, H}^c(\lambda)$ drops by $1$ when $c < 1$, but not when $c =1$. A further result,  \cite[Theorem 1.18]{WXZ21} in the case 
$c < 1$ shows that the leading coefficient increases as the support of $\hat{\psi}$ increases (assuming $\hat{\psi} \geq 0)$, with jumps at special
values $s_j$. In contrast, Theorem \ref{main 1} shows that when $c=1$, the leading coefficient is continuous in $s$ and increases with the 
support of $\hat{\psi}$ (again, assuming $\hat{\psi} \geq 0$). Further comparisons are given in Section \ref{GB0} and in  Section 
\ref{COMPSECT}.

We also see that the order of growth $\lambda^{\frac{n+d}{2}}$ in the case $c=1$ is greater
than the order of growth $\lambda^{n-1}$ for $c< 1$ if and only if $d= n -1$ (the hypersurface case),  that the two orders are equal
when $d = n-2$ and that the asymptotics  for $c < 1$ are of higher order than for $c=1$ if $d \leq n-3$.

The very different powers in Theorem \ref{main 1} and for  \eqref{c<1} is due to the different type of contributions of conormal directions to $H$.
It is explained below in Section \ref{BIANGLESECT} that the powers of $\lambda$ are controlled by the dimension of the set of initial
vectors $\xi \in S_H^*M$ of geodesic bi-angles. The parameter $c$ is defined by $c = \frac{|\pi_H \xi|}{|\xi|}$ where $\pi_H: T_q^* M \to T_q^*H$ is the
orthogonal projection at $q \in H$. We denote  by $S^c_H M$ the covectors this equality with footpoint on $H$, and we denote the decomposition into tangent and normal components by   $\xi = \xi_T + \xi^{\perp}$. When $c < 1$,  $|\xi^{\perp} |^2 = \sqrt{1 - |\xi_T|^2} 
= \sqrt{1 - c^2}$ and there the set of conormal parts of unit vectors  has dimension $n-d-2$.   When $c=1$ the co-normal part  $\xi^{\perp}$ must vanish, the dimension drops by $n-d-2$, and the power drops by $\frac{n-d-2}{2}$.  As mentioned above, the singularity  is of   the  ``blow-down''  type,  exhibited by integrals of Bessel functions (cf. Section \ref{BESSELSECT}).  
 We refer to Lemma \ref{ORDERLEM} for the modified order calculation following the methods of \cite{WXZ21}, and to Section
  \ref{BDSECT} for discussion of the blow-down singularity.

\subsection{\label{BIANGLESECT}  Geodesic geometry}  Before sketching the proof of Theorem \ref{main 1} and before stating additional results, we discuss the  geodesic geometry that is responsible for the singularity at $s=0$ of the  coefficient \eqref{acHpsi1}, and the geodesic geometry which is responsible for maximal jumps in Corollary 
\ref{theo JUMP}. We will prove the statements on maximal jumps in \cite{Z22}.

In the case $c < 1$ of  \cite{WXZ21}, the relevant dynamics was given by geodesic bi-angle geometry.    For any $c$, the singularities of \eqref{SpsiDEF}  are governed by the dynamical equation,
 \begin{equation}\label{GEO1} G_H^{-s} \pi_H G_M^{t + c s} (q, \xi) = \pi_H(q, \xi), \;\; q \in H, \xi \in S^c H. \end{equation}
 where $G_M^t$ (resp. $G_H^s$) is the geodesic flow of $(M,g)$ resp. $(H, g|_{TH})$), where  $\pi_H: S_q^* M \to S_q^*H$ is the orthogonal projection, where $c = \frac{|\pi_H \xi|}{|\xi|}$ and where $S_H^c M $ is the set of covectors $\xi \in T^*_H M$ satisfying this constraint.  To be precise, \eqref{GEO1} is the equation that holds on the critical set (or wave front relation)
 of the governing Fourier integral operator. We think of the condition $\xi \in S^c_H M$ as a `constraint' on the shape of the bi-angles.  For $c < 1$, the solutions $(t, s, q, \xi)$ correspond to geodesic bi-angles starting at $q$ and ending at $\exp_q s  \pi_H \xi$,  consisting of one  leg given by an $M$-geodesic of  length $s + t$, one leg consisting of an $H$-geodesic of length $s$, and with compatible initial and terminal velocity vectors. When $c=1$, and $H$ is totally geodesic, $\xi \in  S^*H$ and $G_H^s(q, \xi) = G_M^s(q, \xi)$, so  the equation
\eqref{GEO1} simplifies   to 
  \begin{equation} \label{GEO2} G_H^{t}  (q, \xi) = (q, \xi), \;\; q \in H, \xi \in S^*H.  \end{equation}
The equation is independent of $s$, explaining why \eqref{acHpsi1}  is an integral over all $s$.  It also indicates that an important geodesic condition is periodicity  of the geodesic flow is that of $H$. In general, we say that the geodesic flow
of a Riemannian manifold $(X, g)$ is periodic if there exists $T \not=0$ such that $G^T_X = Id$. 

The  simplest example where  maximal jumps occur
is the case of totally geodesic subspheres $\Ss^d \subset \Ss^n$ of spheres, where both $G^s_H$ and  $G^t_M$ are periodic. Examples where maximal jumps do not occur
are  Riemannian products $\SS^d \times \Ss^{n-d}$, despite periodicity of $G_H^s$ (see Section \ref{MODSECT}). This  raises the question whether periodicity of $G_M^t$ is also necessary for maximal jumps. In fact, it is not necessary. An example is where $M$ is a convex surface of
revolution and $H$ is its unique rotationally invariant geodesic. The conditions for maximal jumps to occur will be given
in \cite{Z22}. Some indications of what is involved are given in Section \ref{FURTHER}.

To tie the discussion of the equation \eqref{GEO1} together with that in \cite{WXZ21}, we recall the following
standard definition from \cite{DG75}.
\begin{definition} \label{CLEAN}
We say that the set $\gcal_c$,  resp. $\gcal_c^0$ of solutions of \eqref{GEO1}, resp. \eqref{GEO2}, is {\it clean} if $\gcal_c$, resp. $\gcal_c^0$, is a submanifold of $\R \times \R\times S^c_H M$, resp. $\R \times S^c_H M$, and if its tangent space at each point is the subspace fixed by $D_{\zeta}  G_H^{ - s} \circ \pi_H \circ G_M^{cs + t} $ (resp. the same with $s = 0$), where $\zeta$ denotes a  point of $S^c_H M$.
\end{definition}
  
The equation \eqref{GEO2} is simply the equation for periodic geodesics of $G^t_H$ and cleanliness is the condition
that for each period $T$, the set of closed geodesics of period $T$ is a submanifold of $S^*M$ whose tangent space
is the fixed point set of $DG^t_H$ (see \cite{DG75} for this case). Even when this occurs, the Fourier integral 
operator compositions of Section \ref{WFSECT} fail to be clean when $c =1$ and $s=0$ essentially due to the collapse of the fibers of the co-normal sphere bundle, as discussed in Section \ref{COMPSECTINTRO} (see also Section
\ref{COMPSECT}). Since this failure is universal, it does not depend on the metrics or geodesic flows. 

\subsection{Outline of the proof of Theorems \ref{main 1} and further results }

To prove Theorem \ref{main 1} we first study a doubly-smoothed version, 
\begin{equation} \label{NpsirhoDEF} N^{1} _{\psi, \rho, H  }(\lambda) = \sum_{j, k=0}^{\infty} \rho(\lambda - \lambda_j)  \psi(\lambda_j - \mu_k)  \left| \int_{H} \phi_j \overline{e_k}dV_H \right|^2, \end{equation}
with a second test function $\rho \in \scal(\R)$ with $\hat{\rho} \in C_0^{\infty}$.  To obtain the asymptotics of \eqref{NpsirhoDEF},
we study the singularities  of the  Fourier transform, $$N^{1} _{\psi, \rho, H  }(\lambda) = \int_{\R} \hat{\rho}(t) e^{it \lambda} S(t, \psi)dt, \; $$ 
where $S(t, \psi)$ is defined in \eqref{SpsiDEF}.  In terms of wave kernels, we define
\begin{equation} \label{S(s,t)DEF} S(s, t) = \int_H \int_H \gamma_H U_M\gamma_H^* (t  + s, q, q') U_H(-s, q, q') dV_H(q) dV_H(q'), \end{equation}
and then, 
\begin{equation} \label{S(t, psi)DEF} S(t, \psi) = \int_{\R} \hat{\psi}(s) S(s, t) ds. \end{equation}

In Theorem \ref{main 1} we assumed that the only period of $G^s_H$ in the support of $\hat{\psi}$ is $s= 0$. In studying 
\eqref{NpsirhoDEF}, we further need assumptions of the periods of $G_M^t$ in the support of $\hat{\rho}$.
We study the short time  singularities of $S(t, \psi)$ using a reduction to a model phase,  unlike the methods employed in \cite{WXZ21}. 
The model phase is most visible if one uses the H\"ormander type parametrix, whose phase is linear in $t$ (Section \ref{HORPAR}).
In Section \ref{HADPAR} we use a Hadamard parametrix to calculate the amplitude.   

To prove the `aperiodicity' result of Theorem \ref{main 1} we also need some input from long time
singularities from Section \ref{WFSECT}. To this end,
we repeat two  definitions from \cite{WXZ21} but point out that they simplify
when $c=1$ and when $H$ is totally geodesic. The following is the $c=1$ analogue of \cite[Definition 1.15]{WXZ21}.

 \begin{definition} \label{SOJOURNDEF} Let  $$\Sigma^1 := \{t: \rm{Fix} (G^t_H) \not= \emptyset\} $$
 be the set of periods of closed geodesics of $H$.

\end{definition}

In \cite[Definition 1.15]{WXZ21}, the analogous set $\Sigma^c(\psi ) $ for $c < 1$ was defined by 
 as  the set of singular points $t$  of $S(t,\psi)$; it  consists of 
 of $t$ for which there exist solutions of \eqref{GEO1} with $s \in \supp \hat{\psi}.$  When   $c=1$ and $H$ is totally geodesic, it follows from   \eqref{GEO2} that $\Sigma^1(\psi)$ is independent of $\psi$. 
 The order of
the singularity at $t$  corresponds to the dimension of the solution set of \eqref{GEO2}.

 \begin{definition} \label{DOMDEF}
 We say that the singularity at $t=0$ is {\it dominant} if the dimension of the solution set  \eqref{GEO1}   at $t=0$  is strictly 
  greater than any other $t \in \Sigma^1$. \end{definition}
  By the remarks above, the singularity at $t=0$ is dominant except when $G^t_H$ has a positive measure
  of periodic orbits. In the clean case (cf. Definition \ref{CLEAN}), this would mean that $G^t_H$ is periodic, i.e. that
  $(H, g_H)$ is a Zoll manifold.  
  
Although $\Sigma^1$ is independent of $\psi$, the periods $s \in \Sigma^1$ in the support
  of $\hat{\psi}$ play an important role in the coefficient \eqref{acHpsi1} and indeed cause the difficulties which 
  led to the assumption in Theorem \ref{main 1} that $\rm{Supp} \hat{\psi} \subset (-r_0, r_0)$. These periods do
  not affect the order of the singularities at $t \in \Sigma^1$ (including $t=0$) but they do affect the coefficient
  generalizing \eqref{acHpsi1}.

  The following theorem is analogous
  to Theorem \ref{main 1} but employs a smooth test function $\rho$ instead of the sharp interval sum. Its assumptions
  qualify it as a `short-time' theorem.

%Since the test function $\psi$ is arbitrary it is  natural to simultaneously  consider the singularities of 
%  \begin{equation} \label{Spsis} S_{H}(t,s) = \sum_{j=0}^{\infty} \sum_{k=0}^{\infty} e^{it \lambda_j}   e^{i s ( \mu_k -  c \lambda_j)}   \left| \int_{H} %%\phi_j\overline{e_k}dV_H \right|^2,  (s, t) \in \R^2.\end{equation}

\begin{theorem} \label{main 2} Let $\dim M = n, \dim H = d$, and assume $H$ is totally geodesic.  Let $\psi, \rho \in \scal(\R)$ with $\hat{\psi}, \hat{\rho} \in C_0^{\infty}(\R)$.  Assume that
$\rm{supp} \;\hat{\rho} \cap \Sigma^1= \{0\}$ and  $\hat{\rho}(0) =1$.   Let $\hat{\psi} \in C_0^{\infty}(\R)$ be a real, even test function supported   in the set $(-r_0, r_0)$ where  $r_0 < \rm{inj}(M,g)$.   Then, there exists a complete asymptotic expansion of $N^{1} _{\rho, \psi, H  }(\lambda)$ of the form, 
$$  N^{1} _{\rho, \psi, H  }(\lambda) \sim    \lambda^{\frac{n +d}{2}-1}   \sum_{j=0}^{\infty} \beta_j \; \lambda^{-j}, $$
with  $\beta_0 = a_1^0(H, \psi)$ \eqref{acHpsi1}, and
where $C_{n,d}$ is a universal dimensional constant.

\end{theorem}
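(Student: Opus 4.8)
The plan is to derive Theorem \ref{main 2} from a Fourier-integral analysis of the dual sum $S(t,\psi)$ near $t=0$, combined with a reduction to the universal model integral. First I would express the doubly-smoothed counting function as
\[
N^{1}_{\rho,\psi,H}(\lambda) = \int_{\R} \hat{\rho}(t)\, e^{it\lambda}\, S(t,\psi)\, dt = \int_{\R}\int_{\R} \hat\rho(t)\,\hat\psi(s)\, e^{it\lambda}\, S(s,t)\, ds\, dt,
\]
and substitute for $S(s,t)$ the composition of wave kernels in \eqref{S(s,t)DEF}. Using the H\"ormander parametrix for $\gamma_H U_M \gamma_H^*$, whose phase is linear in $t$, and the corresponding parametrix for $U_H(-s)$ on the totally geodesic submanifold $H$ (where $G_H^s = G_M^s|_{S^*H}$), the kernel $S(s,t)$ becomes an oscillatory integral over $H \times S^*_q H$ with phase governed by the dynamical equation \eqref{GEO2}. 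Since $\rm{supp}\,\hat\rho \cap \Sigma^1 = \{0\}$, the only contributing critical point in $t$ is $t=0$, and the geodesic-flow condition reduces to the identity, so the $s$-integral runs over all of $\rm{supp}\,\hat\psi \subset (-r_0,r_0)$, with the amplitude carrying the volume-density factors $\Theta_M^{-1/2}(q,\exp_q s\omega)\,\Theta_H^{-1/2}(q,\exp_q s\omega)$.

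Next I would perform a stationary-phase analysis in the $(t, \xi)$ variables away from the degenerate locus $s=0$. Outside a small interval $|s| > \delta$, the phase is nondegenerate in the remaining variables and ordinary stationary phase produces a full asymptotic expansion in powers of $\lambda^{-1}$ whose leading term, after integrating the density $(s+i0)^{-\frac{n-d}{2}}$ against $\hat\psi(s)$, matches $a_1^0(H,\psi)$ restricted to $|s|>\delta$. The delicate part is the region $|s| < \delta$: here the distance spheres $S_q^*H$ collapse as $s\to 0$, the composition ceases to be clean, and a naive stationary-phase count would overestimate the growth. To handle this I would, following Section \ref{MODELSECT} and Section \ref{FULLAMP}, freeze the amplitude to its value at $s=0$ and $q$ fixed, and recognize the resulting integral over a shrinking ball as a rescaled copy of the universal model integral (a Bessel-type integral, cf. Section \ref{BESSELSECT}). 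The model integral is exactly $\lambda^{\frac{n+d}{2}-1}$ times a constant $C_{n,d}$ depending only on $n,d$ through the Gamma-function identities \eqref{GS}, together with the regularized distributional pairing $\int \hat\psi(s)(s+i0)^{-\frac{n-d}{2}}ds$. One then checks that the error from replacing the true amplitude by its frozen value, and from the $\delta$-cutoff overlap, is lower order, and that the expansion in $\lambda^{-1}$ can be iterated to all orders by expanding the amplitude and Jacobian in Taylor series at $s=0$; this yields the claimed complete asymptotic expansion $\lambda^{\frac{n+d}{2}-1}\sum_j \beta_j \lambda^{-j}$ with $\beta_0 = a_1^0(H,\psi)$.

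The identification of the leading coefficient with the precise regularized distribution \eqref{acHpsi1} — rather than merely "a constant times $\lambda^{\frac{n+d}{2}-1}$" — I would obtain indirectly, as indicated in Section \ref{SINGINTRO}: once the existence of the expansion is established, positivity of $N^1_{\rho,\psi,H}$ forces $\psi \mapsto \beta_0(\psi)$ to be a positive distribution supported in $\R_+$ (because of the ordering $\lambda_j - \mu_k \geq 0$), hence its Fourier transform extends holomorphically to the upper half-plane, which pins down the $(s+i0)^{-\frac{n-d}{2}}$ regularization; the exact value of $C_{n,d}$ comes from the Hadamard parametrix computation of Section \ref{HADPAR} via \eqref{GS}. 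The main obstacle is controlling the $|s|<\delta$ contribution: one must show that the substantial cancellation in the $s$-integral near $s=0$ — where pointwise the integrand is of size larger than the final answer — is captured faithfully by the model reduction, and that no additional distributional term supported at $s=0$ sneaks into $\beta_0$ (the analogue of the jump terms present for $c<1$ in \cite{WXZ21} must be shown to be absent here). This is precisely where the assumption $\rm{Supp}\,\hat\psi \subset (-r_0,r_0)$ is used, to avoid conjugate points and periods of $G_H^s$ that would spoil the clean model structure.
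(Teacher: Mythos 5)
Your proposal is correct and follows essentially the same route as the paper: H\"ormander parametrix reduction to the universal phase, stationary phase away from the blow-down at $s=0$, reduction of the degenerate region to the model (Bessel-type) integral of Section \ref{MODELSECT}--\ref{FULLAMP}, the indirect positivity argument of Section \ref{SINGINTRO} to pin down the $(s+i0)^{-\frac{n-d}{2}}$ regularization and rule out a term supported at $s=0$, and the Hadamard parametrix of Section \ref{HADPAR} for the amplitude and constants. The only cosmetic difference is that where you cut at $|s|=\delta$ and freeze the amplitude, the paper instead performs stationary phase uniformly in the block of variables whose Hessian stays non-degenerate down to $y_d=0$ (Lemma \ref{NONDEGd-1}) and evaluates the remaining $y_d$--radial integral exactly via \eqref{GS}, which avoids matching errors across a cutoff.
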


This theorem seems similar to \cite[Proposition 2.1]{DG75} and many similar asymptotics results that are
proved under clean composition hypotheses. Equivalently, they are proved by the stationary phase method.  As mentioned above, the compositions relevant to Theorem \ref{main 2}
are not clean due to a blow-down singularity at $s=0$ and cannot be proved solely by stationary phase methods. 
Rather we take a hybrid approach in Section \ref{MODELSECT} that combines stationary phase in certain variables
and a direct integration in others.  The order of the asymptotics is consistent with a formal application of stationary phase, but it is not clear apriori that this formal order is correct because the blow-down singularity could dominate the asymptotics.  We emphasize this point with the Euclidean case (Bessel integrals) in Section \ref{BESSELSECT}.

In Proposition \ref{MORESINGS} we extend the proof of Theorems \ref{main 1}- Theorem \ref{main 2} to test functions $\rho$ with 
arbitrary support but retain the assumption that $\rm{Supp} \hat{\psi} \subset (-r_0, r_0)$. The extension then does
not involve any new inputs beyond the wave front analysis in Section \ref{WFSECT}.

Once Theorem \ref{main 2} is proved, the  `sharp' Weyl asymptotics of Theorem \ref{main 1} (except for the last
statement about aperiodic flows)  follow from Theorem \ref{main 2} by a standard cosine Tauberian theorem (Section \ref{SS Tauberian}).  We discuss the proof of the aperiodic flow in the next subsection.

To prove Theorem \ref{main 2}, 
we  construct parametrices  for \eqref{NpsirhoDEF} when $\rm{Supp} \hat{\rho} $ lies in a sufficiently
short interval around $t=0$ so that no other singularities of \eqref{SpsiDEF}  lie in the interval. We use the parametrix to calculate the leading
order contribution in Theorem \ref{main 1}. 
Due to the fact that the clean composition calculus breaks down at small distances, we rely on a small time parametrices when studying small distances. 
When the geodesic flow of $(M, g)$ is periodic, the small distance problem will recur for larger times as well; but the main term of the  asymptotics is due to the
$t =0$ singularity and that can be analysed using the small time parametrix.

\subsection{The remainder estimate in the case where $G_H^s$ is aperiodic} The last statement of Theorem \ref{main 1} says that the remainder is `small oh'' of the universal remainder when $G_H^s$ is aperiodic. This requires inputs from the long time behavior of the wave group and geodesic flow. But the inputs are less than those necessary to generalize Theorem \ref{main 1} to general $\hat{\rho}, \hat{\psi} \in C_0^{\infty}$.  We retain the assumptions on $\hat{\psi}$ but relax the assumption on $\hat{\rho}$ and let its support be arbitrarily large, i.e. we consider all singularities
of $S(t, \psi)$ where  $\rm{Supp} \hat{\psi}
\subset (-r_0, r_0)$. 

%The only new problem when $c=1$ is the contribution of the blow-down singularities visible in \eqref{acHpsi1} under the assumption that $\rm{Supp} \hat{\psi}
%\subset (-r_0, r_0)$. If we remove this assumption we get new blow-down singularities of different types when
%$s = S$ is a period of $G_H^s$; the type depends on the cleanliness of $G_H^s$ and the dimension of the fixed point
%set of $G_H^S$.

\subsection{Dependence of $N^1_{\epsilon, H}$ and of the jumps $J^1_{\epsilon, H}(\lambda_j) $ on $\epsilon$}\label{epsilonSECT} 
It may seem more interesting to analyse the sharp-sharp sums  $N^1_{\epsilon, H}$ of \eqref{cpsi}. 
However, they have an erratic dependence on $\epsilon$, just as $N^1_{\psi, H}(\lambda)$ has a complicated dependence
on $\psi$ if we drop the support assumptions in Theorem \ref{main 1}. The situation is similar to that for $c < 1$, discussed in detail in   \cite[Section 1.10]{WXZ21} (see \cite[Theorem 1.24]{WXZ21}. The  asymptotics of the sharp-sharp sum $N^c_{\epsilon, H}(\lambda)$
for $0 < c< 1$ 
are proved to  have the form, $$N^c_{\epsilon, H}(\lambda) = a_c^0(H, \epsilon) \lambda^{n-1} + o(\lambda^{n-1}), $$ 
where the leading coefficient $a_c^0(H, \epsilon)$ is linear in $\epsilon$. The surprisingly large remainder estimate is sharp  on spheres, due to
jumps in the jump $J^c_{\epsilon, H} $ at special values of $\epsilon$. Note that $\hat{\psi}(0) = \int \psi =  2 \epsilon$ when $\psi = {\bf 1}_{ [-\epsilon, \epsilon]}$. This case is similar to (but more irregular than) the case where $\psi \in C_0^{\infty}$.

\subsection{\label{FURTHER} Background, related results and problems}

Fourier coefficients of restrictions of eigenfunctions are central to the theory of automorphic forms. They were studied classically by Hermite and Jacobi for Fourier coefficients of modular forms  around closed horocycles for the
modular surface ${\mathbb H}^2/SL(2, \Z)$, and later by H. Petersson \cite{P32}  around closed geodesics. The study of $C^{\infty}$ eigenfunctions  of the Laplacian on a hyperbolic surface was developed by H. Masss and was studied
systematically by N.V. Kuznecov \cite{K80} using the Kuznecov sum  formula. Since then, the study of Fourier coefficients around various submanifolds has formed an important part of automorphic forms on very general arithmetic  locally symmetric manifolds. By far the sharpest estimates on individual Fourier coefficients
are those of  \cite{ M16} for geodesic Fourier coefficients of Hecke eigenfunctions.

Averages of Fourier coefficients have been studied in many articles since Kuznecov's article \cite{K80}.  In the automorphic forms literature, the weights or test functions used in Kuznecov's formula are adapted to the setting of hyperbolic quotients or other locally symmetric manifolds. In this article, as 
 in \cite{Zel92, WXZ20, WXZ21}, we use the wave equation and associated  test functions. As often happens,  wave equation methods give sharper remainder terms than other methods, and this is true in the present applications.

 A distant goal is to obtain asymptotics of  ``empirical measure of the Fourier coefficients"  \eqref{FC}  of an individual eigenfunction $\phi_j$ as $\mu_k$ varies, i.e. the measure whose mass at $\mu_k$ is the modulus square $|\int_H \phi_j e_k|^2$
of the Fourier coefficient.  The results of this article and of \cite{WXZ21} concern the integrals of the empirical measure over short intervals.

 We have already discussed the long-time refinement of the results of Theorem \ref{main 1} above; they are studied 
 in \cite{Z22}. In addition, we point out the following closely related problems.
 
 \subsubsection{\label{SIMPLE} Simple generalizations}

 With no additional effort, all results  of this article extend to the more general Kuznecov-Weyl sums, 
for any $c \in [0,1]$, and for any $f \in C^{\infty}(H)$,
\begin{equation} \label{cpsif}
 N^{c} _{\psi, H, f }(\lambda): = 
\sum_{j,  \lambda_j \leq \lambda}  \sum_{k=0}^{\infty}  \psi( \mu_k -  c \lambda_j)  \left| \int_{H} f  \phi_j \overline{e_k}dV_H \right|^2.
 \end{equation}
In the  leading coefficient \eqref{acHpsi1}, for $c=1$,  $\hcal^d(H)$ is replaced by
$  (\int_H f d V_H)$. More generally, we could replace $f$ by a semi-classical pseudo-differential operator  $Op_H(a)$ along $H$ and 
replace the inner products by $\langle Op_H(a) \gamma_H \phi_j, e_k \rangle_{L^2(H)}$ and then the coefficient is $\int_{S^*H} a_0 d\mu_H$
where $a_0$ is the principal symbol and $d\mu_H$ is the Liouville measure on $S^*H $. We will use the generalization
to $Op_H(a)$ in Section \ref{Aperiodic}.

\subsubsection{\label{LOG} More refined remainder estimates}

The Hadamard parametrix of Section \ref{HADPAR} can be used  on manifolds without conjugate points  to obtain a global-in-time
parametrix for the half-wave group  \cite{Be}. Such manifolds always have aperiodic geodesic flows. 
By controlling the exponential growth rate of Jacobi fields and of the number of
periodic orbits, one can probably obtain logarithmic improvements to the remainder estimates of Theorem \ref{main 1}; see \cite{SXZh17, CG21} among many
papers on logarithmic improvements  on related problems.

\subsection{\label{2TERMINTRO} Two term asymptotics}
We briefly indicate the results of  the long time analysis in \cite{Z22}. 
There, Theorem \ref{main 1} is strengthened to a   two
term asymptotics roughly of the form, $$C_{n,d} a^0_I(H, \psi) \lambda^{\frac{n+d}{2}} + Q_H(\lambda) \lambda^{\frac{n+d}{2} -1} + 
o( \lambda^{\frac{n+d}{2} -1}), $$ where $Q_H(\lambda)$ is a bounded, oscillatory function. To be more precise, we prove  somewhat weaker asymptotic inequalities of the type proved for the pointwise Weyl law  by Yu. Safarov \cite{SV}. 
In statement  \eqref{APERIODIC} of Theorem \ref{main 1}, the second $Q_H$ term vanishes due to aperiodicity of $G^s_H$. 
 In general,  $Q_H(\lambda)$   may be continuous or discontinuous, depending  on the long time
dynamics of the geodesic flows. When it is discontinuous,   the  two-term asymptotics imply that the jump estimates 
above are sharp.  Recently, E. L. Wyman and Y. Xi have proved a two-term asymptotics for the $c=0$ Kuznecov formula (i.e. integrals
of a fixed function $f$ against restricted eigenfunctions) \cite{WX22}.

We give two examples to illustrate the signficance of the continuity of $Q_H(\lambda)$ when it is non-zero.
A model case is supplied by totally geodesic subspheres $\Ss^d \subset \Ss^n$ of spheres (see \cite{Z22}). When $ n=2$
and $H = \gamma$ is the equator, the standard basis $\{Y_N^m\}$ of spherical harmonics has the property that
$Y_N^m |_{\gamma}$ has a single non-zero Fourier coefficient. Its size depends on the ratio $\frac{m}{N}$, illustrating the purpose  of the constraint $|c N -  m | < \epsilon$. The edge case  $m = N$ corresponds to highest weight
spherical harmonics, which are special cases of Gaussian beams (see Section \ref{GB0} for background).
The  Nth Fourier coefficient of the restriction  to a stable elliptic closed geodesic $\gamma$  of a general  Gaussian beam $\{\phi_N^{\gamma}\}_{M=1}^{\infty} $ of frequency $\sim N$  has the size of its $L^2$ norm, $||\phi_N^{\gamma}||_{L^2(\gamma)}$. As this shows,   universal bounds
on individual Fourier coefficients (i.e. without any assumptions on $(M, g, H))$  are the same as universal bounds on the restricted $L^2$ norm (see \cite{BGT} for the relevant results). On the other hand,
restrictions of other spherical harmonics $Y_N^m |_{\gamma}$ have again just one non-zero Fourier coefficient but
in general it is $O(1)$ when $\frac{m}{N} = c < 1$. More drastically,
 if we restrict $\phi_N^{\gamma}$ to 
another geodesic $\gamma' \not= \gamma$,  the restricted Fourier coefficients and $L^2$ norms are exponentially
decaying in $N$. 

A more general example when $\dim M=2$ consists of general closed geodesics.  The remainder estimates are sharp when one can construct a Gaussian beam
along the closed geodesic. Such a Gaussian beam exists along the unique rotationally invariant closed geodesic of a convex
surface of revolution, but does not exist along a hyperbolic closed geodesic for an hourglass of revolution.    In the latter case, $Q_H(\lambda)$ is continuous  and does
not contribute to the jumps. The results of \cite{Z22} prove  such heuristic statements.

\subsubsection{\label{GB0} The role of Gaussian beams} 

Maximal Fourier coefficients for $c=1, d=1$ arise when  there exist Gaussian beams along elliptic  closed geodesics. The classical examples are  highest weight spherical harmonics, which are roughly  of the form, $\phi^{n, \gamma}_{N}(s, y)
= C(n, 1, N)  e^{\frac{2 \pi i N s}{L}} e^{- N |y|^2/2}$  of eigenvalue $-\lambda^2 \sim N^2$; they oscillate along a stable elliptic closed geodesic $s \in \gamma$ of length
$L$,  and have Gaussian decay in the normal directions $y$. Gaussian beams are the  most highly localized eigenfunctions, both in phase space $S^*M$ and  in configuration space $M$; they    concentrate in tubes of radius $\lambda^{-\half} \sim N^{-\half}$ around the phase space geodesic. 
Their Fourier coefficients concentrate at the `edge' $\mu_k = N$, so they  are a $c=1$ phenomenon.  Restrictions of Gaussian beams  do not contribute strongly to the asymptotics
of $J^c_{\epsilon,  H}(\lambda)$ for $c < 1$, since the Fourier coefficients of   $\phi_N^{2, \gamma} $ concentrate at the edge. They are studied in detail in 
\cite{Z22}.

\subsubsection{Quantum Birkhoff normal form analysis when $H$ is a closed geodesic}  For  $d=1$, the  eigenvalues
of $|\frac{d^2}{ds^2}|^{\half}$ form the arithmetic progression $2 \pi \Z$ and have multiplicity at most $2$. When $H$ is a closed geodesic,  it is possible to fix $n$ and 
study the sums $\sum_j \rho(\lambda - \lambda_j) |\int_H \phi_j e_n |^2. $  In higher
dimensions, the eigenvalues are generically distributed uniformly modulo one  \cite{DG75}  and can have various types of multiplicities. 

In  the case of a closed geodesic, one could use quantum 
Birkhoff normal form techniques to study Fourier coefficients; such techniques do not seem available for higher dimensional totally geodesic submanifolds.

 One could also let $H$ be an arc of a non-closed geodesic, but there never exist maximal jumps 
for restrictions to such geodesic arcs and in applications they do not seem important.

\subsubsection{Submanifolds with non-degenerate second fundamental form}
 
In this article, we only study   totally geodesic submanifolds $H \subset M$ in the case $c =1$. These are the extremal cases 
for Kuznecov-Weyl asymptotics, and have been the 
 focus of eigenfunction restriction problems (e.g. \cite{G79, BGT, CG21,  M16, T09}. But the case of
 manifolds with non-degenerate second fundamental form  and with $c=1$ are also important, and in addition are generic. For instance, horocycles and distance
 spheres are non-degenerate and are fundamental in the theory of automorphic forms. The proof of Proposition \ref{N1PARAM}
 breaks down at the steps  in Section \ref{HTG} where $\exp_M^{-1}$ and $\exp_H^{-1}$ are equated on $TH$ and for the same
 reason \eqref{GEO1} is more complicated than \eqref{GEO2}. In the case of a hypersurface  $H$ with non-degenerate second fundamental form  phase function in Proposition \ref{N1PARAM} has a  degeneracy of fold type when $c=1$  rather than the collapse of $N^* H$ along $\rm{Diag}(H \times H)$. The
 fold singularity when $(n,d) = (2,1)$ and $(M,g)$ is a finite area hyperbolic surface with cusps is responsible for the analytic results
 in \cite{Wo04}. It would be interesting to generalize the results of this article (and the case $c< 1$ in \cite{WXZ21}, in which $H$
 is a general submanifold) to the case $c=1$ and $H$ has non-degenerate second fundamental form.  Explicit examples where one can
 expect relative extremals for Kuznecov-Weyl asymptotics
 are non-equatorial  latitude spheres $\Ss^d \subset \Ss^n$. 
 
 \subsection{Acknowledgements} 
 
 This article began as a collaboration with E. L. Wyman and Y. Xi, continuing our work in \cite{WXZ20, WXZ21}, and owes much to the many
 discussion we have had on Fourier coefficient Kuznecov formulae.   
\section{Geodesic geometry} 
 In this section we discuss the geodesic geometry underlying the Kuznecov-Weyl asymptotics, in particular the
 role of \eqref{GEO1}. We begin with a short list of examples of $(M, g, H)$ where $H \subset M$ is totally geodesic.
 A study of Kuznecov-Weyl asymptotics in these examples illuminates the basic question of when the jumps in 
 Corollary \ref{theo JUMP} are of maximal size.

 \subsection{\label{MODSECT} Model examples of totally geodesic $H \subset M$} Closed geodesics
 of Riemannian manifolds are always totally geodesic submanifolds of dimension $d=1$ and many articles
 are devoted to norms and Fourier coefficients of restrictions. See for instance \cite{P32, G83, GRS17,   M16}.
 
  A generic Riemannian manifold does not possess even the germ of a totally geodesic
 submanifold of dimension $d > 1$. Hence we provide a number of  well-known  models which do have such submanifolds.
 
 \begin{itemize}
 \item Totally geodesic subspheres $\Ss^d \subset \Ss^n$ of spheres.  The jump estimates are shown to be sharp for certain cases of $(n,d)$ in 
\cite{Z22}.  Spheres are special in that they are compact rank one symmetric spaces, and the eigenspaces of 
$\Delta$ are spanned in an appropriate sense by Gaussian beams along closed geodesics  (see Section \ref{GB0}). The leading term and
second term  of the Kuznecov-Weyl sums reflect the different types of    restriction of Gaussian beams of $\Ss^n$ to $\Ss^d$, in particular of Gaussian beams of $\Ss^n$ along closed geodesics of $\Ss^d$ versus Gaussian beams along closed geodesics transverse to $\Ss^d$. 
Spheres also illustrate the need for the sum $\sum_{\ell}$ over repeated eigenvalues in an eigenspace
in \eqref{JDEFpsi}. In some cases of $(n,d)$, there exist classical results on asymptotics of Legendre functions which 
also prove the sharpness of the  jump estimates.  \bigskip

\item Totally geodesics submanifolds of other compact rank one symmetric spaces. For instance, the results for
 subspheres $\Ss^d \subset \Ss^n$ of spheres generalize to sub-projective spaces ${\mathbb C} {\mathbb P}^d \subset {\mathbb C}{\mathbb P}^n$. These are examples of Zoll manifolds, all of whose geodesics are closed, and their eigenspaces are spanned by Gaussian beams. One would not expect maximal jumps to occur on general Zoll manifolds, where the Gaussian beams are only quasi-modes and not actual eigenfunctions. \bigskip

\item  Maximal flats of higher rank compact symmetric spaces or of compact  locally symmetric quotients  are
totally geodesic. The asymptotics of  eigenfunctions of   higher rank compact symmetric spaces  are  studied in \cite{G79}, although not the Fourier coefficients of restrictions.  $L^p$ norms of eigenfunctions on higher rank
locally symmetric quotients are studied in \cite{M15}. \bigskip

\item QCI (Quantum Completely integrable) systems \cite{TZ03, T09}. Quantum integrability means that $\Delta$ commutes with $n = \dim M$ 
independent first order pseudo-differential operators. Compact symmetric spaces are QCI and compact locally
symmetric quotients of rank $r$  are partially QCI ($\Delta$ commutes with $n-r$ additional operators).  An open question is whether there exist examples other
than compact rank one symmetric spaces where  jumps achieve their
maximal growth, since joint eigenfunctions concentrate on level sets of the moment map and as in \cite{TZ03, T09}, the eigenfunctions
which concentrate on singular levels have large restricted $L^p$ norms.

  Examples not already discussed include ellipsoids $\ecal_n \subset \R^{n+1}$
of the form $\sum_{j=1}^{d} x_j^2 + \sum_{j= d+1}^{n+1} \frac{x_j^2}{a_j} $ where $\{1, a_j\}_{j=d+1}^{n+1}$ are independent over $\Q$.
The subsphere $\Ss^{d-1} \subset \ecal_n$ is totally geodesic and $G^t_{\Ss^{d-1}}$ is periodic but $G^t_{\ecal_n}$ is aperiodic.  \bigskip

\item Riemannian products $M = H \times K$; $H \times \{k\}$ or $\{h\} \times K$ is  totally geodesic for any $h\in H, k \in K$. For instance if $M = \Ss^n \times \Ss^d$, the geodesic flow is periodic on $H$ but
not on $M$. Flat tori are often products of lower dimensional tori. Maximal jumps are never achieved on product manifolds (see \cite{Z22}).  \bigskip

\item Warped product metrics $M = K \times_w H$. Given  metrics $h$ on $H$ and $k$ on $K$, a warped product has the form $k \oplus  w h$  where 
$w: K \to \R_+$ is a positive smooth function. One often views $M $ as a bundle over $K$ with fiber $H$. A submanifold $S \times K \subset M$ is totally geodesic
in $M$ if and only if $S$ is totally geodesic in $H$ and in particular each submanifold $H \times \{k\}$ is totally geodesic.  More generally, Riemannian submersions with
totally geodesic fibers are examples with totally geodesic submanifolds. \bigskip

%\item Manifolds without conjugate points, such as negatively curved manifolds or flat tori. Although flat tori are quantum completely integrable,
%the non-existence of conjugate points prohibits eigenfunctions from having large Fourier coefficients along totally geodesic submanifolds. One
%expects to get quantitative improvements on the generic remainder bounds in these examples. \edit{Which have TGS? Higher rank locally
%symmetric spaces. }
 \end{itemize}

\subsection{\label{WFSECT} Wave front calculations}

In this section,  we generalize the results of \cite[Sections 3-5]{WXZ21} on the compositions of canonical relations relevant to smoothed Kuznecov-Weyl sums \eqref{NpsirhoDEF}  to the case $c=1$. Much of the  analysis in the case $c=1$ is almost identical
to that for $c < 1$,  and the proofs are omitted 
when they are essentially the same for $c < 1$ in \cite{WXZ21}  and $c=1$. However, it is necessary to repeat some of the calculations because the order of $S(t, \psi)$ \eqref{SpsiDEF} at $t=0$ in Lemma \ref{ORDERLEM}  is very different from the order in the case
$c <1$, and we need to track down the change in order. 

A second major  change is the existence of a blow-down singularity at $s=0$, leading to the singularity in the leading coefficient \eqref{acHpsi1}, viewed  as a distribution
on the test function $\psi$.  The calculations in this section are used to explain the relevance of the equation \eqref{GEO1} to Kuznecov-Weyl asymptotics and are also used to
prove \eqref{APERIODIC} of Theorem \ref{main 1}.

The analysis in \cite{WXZ21} for the case $c <1$ was based on ladder theory for Fourier integral operators, in the sense of \cite{GU89}. The geodesic
geometry arises in the analysis of the compositions of the Fourier integral operators in Section \ref{WFSECT}. 

The smoothed  Kuznecov-Weyl sums \eqref{NpsirhoDEF} are Fourier dual to traces \eqref{SpsiDEF} arising from the following
operators on  $C^{\infty}(M \times H)$, 
\begin{equation} \label{PQ} \left\{\begin{array}{l} P : = P_M: =   \sqrt{-\Delta}_M \otimes I,\;\;\; \qquad P_H = I \otimes  \sqrt{-\Delta_H}
% + I \otimes \sqrt{-\Delta}_{H} = P_M \clubsuit + P_H
 , \\ \\ 
 Q_1:=  \sqrt{-\Delta}_M \otimes I -  I \otimes \sqrt{-\Delta}_{H} =  P_M - P_H. \end{array} \right. \end{equation}

As discussed at length in \cite{WXZ21}, 
the system $(P, Q_1)$ is elliptic;  $Q_1 $ is  a non-elliptic first order pseudo-differential operator of real principal type with characteristic variety, \begin{equation}
\label{CHARQ} \Char(Q_1): \{(x, \xi, q, \eta) \in T^*M \times T^*H:  |\xi|_g - |\eta|_{g_H} = 0\}. \end{equation} 
Here, $g_H$ denotes the restriction of $g$ to $TH$.
 
Given $\psi \in \scal(\R)$ with $\hat{\psi} \in C_0^{\infty}(\R)$, we  define the `fuzzy ladder projection',
\begin{equation} \label{FLP}
	\psi(Q_1) \;: L^2(M \times H) \to L^2(M \times H), \;\;
	\psi(Q_1) \;= \int_{\R} \hat{\psi}(s) e^{i s Q_1} ds
\end{equation}

Then, the trace \eqref{SpsiDEF} is given by,

\begin{align} \label{St}
S^1(t, \psi) &= \Pi_*(\Delta_H \times \Delta_H)^* (\gamma_H \otimes I ) e^{i t P} \psi(Q_1) (\gamma_H \otimes I)^* \\
\nonumber
&= \sum_{j,k} e^{it \lambda_j} \psi(\lambda_j - \mu_k) \left| \int_H \phi_{j,k}(x,x) dV_H (x) \right|^2.
\end{align}
They are Fourier dual to the \eqref{NpsirhoDEF} in the sense that, 
\begin{align} \label{FTFORM}S^1(t, \psi) = 
{\mathcal F}_{\lambda \to t} d N^{1} _{\psi, H  }(t) 
%&= \sum_{j,k} e^{it\lambda_j} \psi(\mu_k -c  \lambda_j) \left| \int_H \phi_j \overline{\psi_k} \, dV_H  \right|^2.
\end{align}
The composition theory of \eqref{St} is discussed in Section \ref{WFSECT} below.

The following Lemma is analogous to \cite[Lemma 3.1]{WXZ21} and the proof is the same.
\begin{lemma} \label{WFQc}
$\psi(Q_1)$ of \eqref{FLP} is a Fourier integral operator in the class $ I^{-\half}((M \times H) \times (M \times H)), {\ical_{\psi}^c}')$ with canonical relation
\begin{multline*}
	\ical^c_{\psi} :=  \{(x, \xi, q, \eta; x', \xi',  q', \eta') \in  {\Char}(Q_1)  \times  {\Char}(Q_1) : \\
	\exists s \in \supp(\hat{\psi}) \text{ such that } G^{ s}_M \times G^{-s}_{H}(x, \xi, q, \eta) = (x', \xi', q', \eta') \}.
\end{multline*}
%\edit{In the set definition above, we needed to apply the flow to $(x', \xi', q', \eta')$ to get $(x, \xi, q, \eta)$. Fixed.}
The symbol of $\psi(Q_1)$ is the transport of $(2 \pi)^{-\frac{1}{2}} \hat{\psi}(s) |\dd s|^{\half} \otimes |\dd \mu_L|^{\half}$ via the implied parametrization $(s,\zeta) \mapsto (\zeta, G_M^{s} \times G_H^{-s}(\zeta))$, where $\mu_L$ is Liouville surface measure on $\Char(Q_1)$. % \edit{$\leftarrow$ Simplified.}
\end{lemma}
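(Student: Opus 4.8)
The plan is to recognize $\psi(Q_1)$ as a standard oscillatory-integral representation obtained by writing the half-wave propagator $e^{isQ_1} = e^{isP_M}\otimes e^{-isP_H}$ and integrating against $\hat\psi(s)$, and then to read off its Fourier integral class and canonical relation from the known FIO structure of the two half-wave groups. First I would recall that $e^{isP_M}\in I^{-1/4}(M\times M,\Lambda_M')$, where $\Lambda_M = \{(x,\xi;y,\eta): (x,\xi)=G_M^s(y,\eta),\ s\ \text{the time}\}$ is the graph of the geodesic flow (as in H\"ormander or \cite{DG75}), with principal symbol the half-density transported by the flow together with the Maslov factor; likewise for $e^{-isP_H}$ on $H\times H$. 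The tensor product $e^{isP_M}\otimes e^{-isP_H}$ is then an FIO on $(M\times H)\times(M\times H)$ whose canonical relation is the product graph $G_M^s\times G_H^{-s}$.

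The key step is the $s$-integration $\psi(Q_1)=\int_{\R}\hat\psi(s)\,e^{isP_M}\otimes e^{-isP_H}\,ds$. Here one applies the standard principle (see the treatment of spectral projectors in \cite{DG75, HoIV}) that integrating a family of FIOs parametrized by $s$, with phase containing the term $s(|\xi|_g-|\eta|_{g_H})$ coming from the symbols of $P_M$ and $P_H$, performs a stationary-phase / pushforward reduction: the extra integration variable $s$ raises the order by $1/2$ (one new phase variable integrated out against a non-degenerate quadratic-in-momenta critical locus) and restricts the canonical relation to the locus where $\partial_s(\text{phase})=0$, i.e. to $|\xi|_g=|\eta|_{g_H}$, which is exactly $\Char(Q_1)\times\Char(Q_1)$ intersected with $\{(x,\xi,q,\eta;x',\xi',q',\eta'): G_M^s\times G_H^{-s}(x,\xi,q,\eta)=(x',\xi',q',\eta'),\ s\in\supp\hat\psi\}$. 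Since $e^{isP_M}\otimes e^{-isP_H}$ lies in $I^{-1/2}$ on the $(2n+2d)$-dimensional product (each factor contributing $-1/4$ with the convention that counts the base dimension), the $s$-integration produces $\psi(Q_1)\in I^{-1/2}$; tracking the orders carefully with the half-density/Maslov conventions of the paper gives the stated class $I^{-1/2}((M\times H)\times(M\times H),(\ical_\psi^c)')$.

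For the symbol claim I would use that the principal symbol of $e^{isP_M}$ (resp. $e^{-isP_H}$) is the canonical half-density on $\Lambda_M$ (resp. $\Lambda_H$) given by transporting $|d\mu_L|^{1/2}$ along the Hamilton flow, so the symbol of the tensor product is the corresponding half-density on the product graph. The $s$-integration against $\hat\psi(s)\,|ds|^{1/2}$ then, by the composition/pushforward formula for symbols of FIOs, yields the half-density on $\ical_\psi^c$ obtained by transporting $(2\pi)^{-1/2}\hat\psi(s)\,|ds|^{1/2}\otimes|d\mu_L|^{1/2}$ under the parametrization $(s,\zeta)\mapsto(\zeta,G_M^s\times G_H^{-s}(\zeta))$ of $\ical_\psi^c$, which is precisely the asserted formula. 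Since the argument is word-for-word the one in \cite[Lemma 3.1]{WXZ21} with the only change being that the second geodesic flow is $G_H^{-s}$ rather than $G_H^{-cs}$ (and $Q_1=P_M-P_H$ rather than $P_M-cP_H$), I would simply indicate these substitutions and refer to \cite{WXZ21} for the details.

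\textbf{Main obstacle.} The genuinely delicate point is verifying that the $s$-integration is a clean (non-degenerate) pushforward so that the order bookkeeping is exactly $-1/4-1/4+1/2 = -1/2$: one must check that the phase is non-stationary transversally to $\Char(Q_1)$ in the $s$-variable, i.e. that $\partial_s^2(\text{phase})$ is non-zero on the fiber — equivalently that $Q_1$ is of real principal type so that $\{|\xi|_g=|\eta|_{g_H}\}$ is a smooth hypersurface cut out transversally. This is exactly the ellipticity/real-principal-type statement for $(P,Q_1)$ recalled just before the Lemma (and established at length in \cite{WXZ21}), so in practice it is already available; the proof therefore reduces to invoking that structural fact and quoting the composition calculus.
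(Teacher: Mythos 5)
Your proposal takes essentially the same route as the paper, which in fact gives no independent argument here: it states that the proof is identical to \cite[Lemma 3.1]{WXZ21}, i.e. precisely the Guillemin--Uribe-style argument you sketch (write $\psi(Q_1)=\int_{\R}\hat\psi(s)\,e^{isP_M}\otimes e^{-isP_H}\,ds$, use the FIO structure of the two half-wave groups, let the $s$-integration enforce $|\xi|_g=|\eta|_{g_H}$ and produce the flowout relation with the transported symbol), so in substance your argument is correct and matches the intended one. Two slips in your write-up are worth fixing, though neither affects the conclusion: the order bookkeeping is internally inconsistent as stated (at fixed $s$ the product propagator has order $0$, its space-time kernel order $-\tfrac14$, and smearing in $s$ against $\hat\psi$ \emph{lowers} the order by $\tfrac14$ from that, giving the stated $-\tfrac12$; your ``each factor contributes $-1/4$'' together with ``raises the order by $1/2$'' cannot both be reconciled with $I^{-1/2}$), and the relevant non-degeneracy is not $\partial_s^2(\mathrm{phase})\neq 0$ --- the phase is linear in $s$, so that second derivative vanishes identically --- but rather $d_\zeta q_1\neq 0$ on $\Char(Q_1)$ (real principal type), equivalently the unit cross-term $\partial_s\partial_\mu (s\mu)$ after taking $q_1$ as a transverse coordinate $\mu$ to the characteristic variety, which is the condition you correctly invoke in the end.
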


We  introduce a second smooth cutoff $\rho \in \scal(\R), $ with $\hat{\rho} \in C_0^{\infty}$ and define
$$
	\rho(P - \lambda) = \frac{1}{2\pi} \int_{\R} \hat{\rho}(t)  e^{- it \lambda} e^{i t P} dt.
$$
By Fourier inversion, \begin{equation} \label{PQ2} \begin{array}{lll} 
	\rho(P- \lambda) \psi(Q_1) & = &  \frac{1}{2\pi} \int_{\R}  \hat{\rho}(t)  e^{- it \lambda} e^{i t P} \psi(Q_1) \, dt \\ &&\\
	& =& 
	 \frac{1}{2\pi} \int_{\R}  \int_{\R} \hat{\rho}(t) \hat{\psi}(s)  e^{- it \lambda} e^{i t P} e^{- is Q_1}  \, ds dt. \end{array}
\end{equation}

As is well-known \cite{DG75}, $e^{it P}
\in I^{-\frac{1}{4}}( \R \times M \times M, \wt{Graph}(g^t)\}$ where $\wt{Graph}(g^t) = \{(t, \tau, g^t(\zeta), \zeta): \tau + \sigma_P(\zeta) = 0\}$
is the space-time graph of the flow.
For simplicity of notation we denote $\zeta = (\zeta_1, \zeta_2) \in T^*(M \times H)$.  Since the canonical relation of $e^{itP}$ is
the graph of the bicharacteristic flow of the symbol $\sigma_P$ of $P$ on $T^*(M \times H)$, the composition theorem for Fourier integral operators gives,

The following Lemma is analogous to \cite[Lemma 3.4]{WXZ21}. 
\begin{lemma} \label{WFsPIc} $e^{it P} \psi(Q_1): L^2(M \times H) \to L^2(\R \times M \times H)$ is a Fourier integral operator in the class $I^{-\frac{3}{4}}((\R \times M \times H) \times (M \times H), {\ccal^1_{\psi}}')$, with canonical relation
\begin{multline*}
\ccal^1_{\psi} := \{(t, \tau, G_M^{s + t} \times G_H^{-s}(\zeta), \zeta) \in T^* \R \times  \Char(Q_1)  \times  \Char(Q_1): \\ 
%\chi_{\epsilon}(G^{cs + t}(\zeta_M) \not= 0,
s \in \supp(\hat{\psi}), \ \tau + |\zeta_M|_g = 0\}
\end{multline*}
In the natural parametrization of $\ccal_\psi^1$ by $(s,t,\zeta) \in \supp \hat \psi \times \R \times \Char(Q_1)$ given by
$$
(t, - |\zeta_M|_g, G_M^{s + t} \times G_H^{-s}(\zeta), \zeta),
$$
the symbol of $e^{itP} \psi(Q_1)$ is
$(2 \pi)^{-\frac{1}{2}} \hat{\psi}(s)|ds|^{\half} \otimes |\dd t|^{\half} \otimes |\dd \mu_L|^{\half}$,
where $\mu_L$ is Liouville surface measure on $\Char(Q_1).$
\end{lemma}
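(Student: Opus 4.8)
\textbf{Proof proposal for Lemma \ref{WFsPIc}.}

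The plan is to obtain the statement as a standard Fourier integral operator composition, reading the canonical relation off the two factors $e^{itP}$ and $\psi(Q_1)$, and then tracking the symbol through the composition. First I would recall that, as stated just above the Lemma, $e^{itP}$ (acting on $C^\infty(M\times H)$, with $P=P_M=\sqrt{-\Delta}_M\otimes I$) lies in $I^{-1/4}(\R\times(M\times H)\times(M\times H),\widetilde{\mathrm{Graph}}(g^t)')$, where the canonical relation is the space-time graph $\{(t,\tau,G_M^t\times\id(\zeta),\zeta):\tau+|\zeta_M|_g=0\}$ — note that $P$ does not differentiate in the $H$-variables, so the flow on $T^*(M\times H)$ is $G_M^t$ on the first factor and the identity on the second. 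By Lemma \ref{WFQc}, $\psi(Q_1)\in I^{-1/2}((M\times H)^2,{\ical_\psi^c}')$ with canonical relation $\ical_\psi^c$ consisting of pairs $(\zeta',\zeta)\in\Char(Q_1)\times\Char(Q_1)$ with $\zeta'=G_M^s\times G_H^{-s}(\zeta)$ for some $s\in\supp\hat\psi$.

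Next I would verify that the composition of these two canonical relations is transverse (or at least clean) in the sense required by the Fourier integral operator composition theorem (e.g.\ \cite{DG75, HoIV}). Writing $\Gamma_1=\widetilde{\mathrm{Graph}}(g^t)$ and $\Gamma_2=\ical_\psi^c$, the composed relation $\Gamma_1\circ\Gamma_2$ consists of all $(t,\tau,\eta,\zeta)$ with $\zeta\in\Char(Q_1)$, $\eta=G_M^t\times\id(\zeta')$ and $\zeta'=G_M^s\times G_H^{-s}(\zeta)$; since $G_M^t$ and $G_M^s$ commute this is exactly $\eta=G_M^{s+t}\times G_H^{-s}(\zeta)$ with $\tau+|\zeta_M|_g=0$, which is the displayed $\ccal_\psi^1$. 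Transversality is straightforward here because the intermediate variable $\zeta'$ is determined by $\zeta$ and $s$ alone and the flows are diffeomorphisms; one checks that $T\Gamma_1\oplus T\Gamma_2$ surjects onto the relevant tangent space exactly as in the $c<1$ case of \cite[Lemma 3.4]{WXZ21}. The order adds: $-\tfrac14+(-\tfrac12)=-\tfrac34$, which matches the class $I^{-3/4}$ claimed, after accounting for the shift in the dimension of the base coming from the extra $\R$-factor (this bookkeeping is the same as in \cite{WXZ21} and I would simply cite it).

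Finally I would transport the symbol. The symbol of $e^{itP}$ in its natural graph parametrization is (up to the universal constant from the $\tfrac14$-density normalization) $|dt|^{1/2}$ tensored with the canonical half-density on $T^*(M\times H)$; the symbol of $\psi(Q_1)$ is, by Lemma \ref{WFQc}, the transport of $(2\pi)^{-1/2}\hat\psi(s)\,|ds|^{1/2}\otimes|d\mu_L|^{1/2}$ via $(s,\zeta)\mapsto(\zeta,G_M^s\times G_H^{-s}(\zeta))$. Under composition the two half-densities on $T^*(M\times H)$ pair to a single half-density, and the product structure in $(s,t)$ is preserved because $s$ parametrizes the first factor's intermediate point and $t$ the free space-time variable. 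Hence in the parametrization $(s,t,\zeta)\mapsto(t,-|\zeta_M|_g,G_M^{s+t}\times G_H^{-s}(\zeta),\zeta)$ the symbol is $(2\pi)^{-1/2}\hat\psi(s)\,|ds|^{1/2}\otimes|dt|^{1/2}\otimes|d\mu_L|^{1/2}$, as asserted. The main obstacle — though a mild one — is the transversality/cleanliness check and the careful Maslov-index and density bookkeeping in the composition; but since the geometry here is a genuine graph composition and differs from \cite[Lemma 3.4]{WXZ21} only in replacing the factor $c$ by $1$ and using $G_H^{-s}=G_M^{-s}$ on $T^*H$ (because $H$ is totally geodesic), I would carry it out by direct reference to that lemma, indicating only the points where $c=1$ simplifies the formulas.
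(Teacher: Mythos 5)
Your proposal is correct and follows essentially the same route as the paper: the paper also obtains the Lemma by composing the space-time graph relation of $e^{itP}$ (order $-\tfrac14$, flow $G_M^t\times\mathrm{id}$) with $\psi(Q_1)$ from Lemma \ref{WFQc} (order $-\tfrac12$), noting as in \cite{WXZ21} that the composition is transversal so the orders add to $-\tfrac34$ and the symbols transport as stated. The only difference is one of detail: the paper simply cites the FIO composition theorem and the analogous \cite[Lemma 3.4]{WXZ21}, whereas you spell out the transversality and half-density bookkeeping, which is consistent with what the paper leaves implicit.
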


\begin{remark} Geometrically, this wave front relation consists (initial, terminal) data  of pairs of geodesic arcs, an $H$ arc of length
$s$ and an $M$-arc of length $t +s$, the only constraint on the initial and terminal covectors being that the $H$ initial
(resp. terminal) covectors have the same length as the $M$ initial (resp. terminal) covectors.
\end{remark}

As in \cite{WXZ21} for $c < 1$,  $e^{it P} \circ \psi(Q_1)$ is a transversal composition, and therefore its order is the sum of the
order $\frac{-1}{4}$ of $e^{it P}$ \cite{DG75}  and the order $-\half$ of $\psi(Q_1)$ (Lemma \ref{WFQc}).

To reduce to $H$, we introduce
the restriction operator,
$	\gamma_H \otimes I: C(M \times H) \to C(H \times H),$
and define 
\begin{equation} \label{LRLEM}
	\gamma_{\R \times H \times H} \circ (B_{\epsilon}(x,D) \otimes I) e^{it P} \psi(Q_1) \circ \gamma_{H \times H}^*
\end{equation}

Here, $B_{\epsilon}(x, D)$ is a cutoff operator away from normal directions. 
We refer to \cite{WXZ21} for a discussion of such cutoff operators,  but note that in the totally geodesic case
the cutoff away from tangential directions is un-necessary.  
For fixed $\epsilon >0,$ we define the cutoff operator $\chi^{(n)}_{\epsilon}(x,D) = Op(\chi_{\epsilon}^{(n)}) \in Op(S^0_{cl}(T^*M))$ has its homogeneous symbol  $\chi^{(n)}_{\epsilon}(x,\xi)$ supported in an $\epsilon$-conic neighbourhood of $N^*H$ with $\chi^{(n)}_{\epsilon} \equiv 1$ in an $\frac{\epsilon}{2}$ subcone. We put, 
$B_{\epsilon}(x, D) = I - \chi^{(n)}_{\epsilon} (x, D).$

The following is the analogue of \cite[Lemma Proposition 4.7]{WXZ21} when $c=1$.
\begin{proposition} \label{LRLEMPROP2} The wave front relation of \eqref{LRLEM} is given by,
\begin{multline*}
\Gamma^1_{\psi, \epsilon} := (\pi_{\R \times H \times H} \times \pi_{H \times H}) \ccal_{\psi, \epsilon}^1 \cap (T^*\R \times T^*_{H } M \times T^*H \times T^*_H M \times T^*H ) \\
= \{(t, \tau, \pi_{H \times H} \zeta, \pi_{H \times H} (G^{ s +t}_M \times G^{-s }_{H}) (\zeta)) : |\zeta_M|_g + \tau = 0,\\
\zeta \in \Char Q_1 \cap T^*_{H \times H} (M \times H), \ G^{ s +t}_M(\zeta_M) \in T^*_H M \\
 (1- \chi_{\epsilon}) (G^{ s +t}_M(\zeta_M)) \not=0,  \ s \in \supp \hat \psi \} \\
\subset T^*\R \times (T^*H \times T^*H \times T^*H \times T^*H).
\end{multline*}
Moreover, on the support of the cutoff operator $B_{\epsilon}(x, D)$ away from $N^*H$, $\Gamma^1_{\psi,\epsilon}$
is a Lagrangian submanifold and 
the `reduced' Fourier integral operator 
\begin{equation} \label{LRLEM2}
	\gamma_{\R \times H \times H} \circ (B_{\epsilon}(x,D) \otimes I) e^{it P} \psi(Q_1) \circ \gamma_{H \times H}^*
\end{equation}
belongs to  the class 

$$
	I^{\rho(m, d)} (\R \times (H \times H) \times (H\times H), \Gamma^1_{\psi,\epsilon}),
$$
with 
$$\rho(m, d) = \ord  e^{it P} \psi(Q_1)  + \half (n-d) + 2 d +\half -\half  (4 d+ 1) = \ord  e^{it P} \psi(Q_1)  + \half (n-d) . $$

\end{proposition}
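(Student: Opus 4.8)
The plan is to prove Proposition~\ref{LRLEMPROP2} by the standard Fourier integral operator composition calculus, tracking the geometry of the three composition steps separately: first the composition $\gamma_{\R\times H\times H}\circ(\cdots)$, namely restriction of the $M$-factor to $H$ on the left; second $(\cdots)\circ\gamma^*_{H\times H}$, the restriction on the right; and finally the intersection with the diagonal-type submanifolds coming from the cutoff $B_\epsilon(x,D)$. By Lemma~\ref{WFsPIc} we already know $e^{itP}\psi(Q_1)$ is a Fourier integral operator with canonical relation $\ccal^1_\psi$ and order $-\tfrac34$ (transversal composition of $e^{itP}$ of order $-\tfrac14$ and $\psi(Q_1)$ of order $-\tfrac12$). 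The pseudodifferential cutoff $B_\epsilon(x,D)\otimes I$ does not change the canonical relation (it only trims away the conic neighborhood of $N^*H$) nor the order, so the starting point is an operator of order $-\tfrac34$ with relation $\ccal^1_\psi$.

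\medskip

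First I would set up the geometry of composing with the restriction $\gamma_H$. The restriction operator $\gamma_H\colon C^\infty(M)\to C^\infty(H)$ is a Fourier integral operator of order $\tfrac14(n-d)$ (this is the standard order of a restriction to a codimension-$(n-d)$ submanifold; see the Guillemin--Uhlmann / Tataru-type formula used in \cite{WXZ21}) whose canonical relation is the pullback $\{(q,\pi_H\xi;\,q,\xi):q\in H,\ \xi\in T^*_qM\}$. Composing on both sides of the $M\times H$ operator amounts to restricting the $M$-factor only, which contributes $\tfrac14(n-d)$ on the left and $\tfrac14(n-d)$ on the right, i.e. $\tfrac12(n-d)$ total to the order, and geometrically cuts $\ccal^1_\psi$ down to covectors whose $M$-footpoints lie on $H$, projecting the $M$-covectors to $T^*H$ via $\pi_H$. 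One must check that each of these is a clean (or transversal) composition on the support of $B_\epsilon$: precisely, away from the conic neighborhood of $N^*H$ the projection $\pi_H\colon T^*_HM\to T^*H$ is submersive on the relevant covectors, so the excess vanishes there and the compositions are transversal. This is exactly where the cutoff $B_\epsilon(x,D)$ earns its keep, and it is the same mechanism as in \cite[\S4]{WXZ21}; the new feature for $c=1$ is that no cutoff away from tangential directions is needed, since $H$ is totally geodesic and the constraint $\zeta\in\Char(Q_1)\cap T^*_{H\times H}(M\times H)$ forces $|\zeta_M|=|\zeta_H|$, which when $\zeta_M\in T^*H$ (guaranteed on the support of $B_\epsilon$ after we insist the endpoint $G^{s+t}_M(\zeta_M)$ returns to $T^*_HM$) makes $\pi_H\zeta_M=\zeta_M$ automatically.

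\medskip

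Next I would assemble the order count. Writing $m=\ord(e^{itP}\psi(Q_1))=-\tfrac34$, the two restriction compositions add $\tfrac12(n-d)$, but one must correct for the dimension shift in passing from $I^\bullet$ on $\R\times M\times H\times M\times H$ to $I^\bullet$ on $\R\times H\times H\times H\times H$, because the order of a Fourier integral operator depends on the total dimension of the manifolds it acts between through the $-\tfrac14(\dim)$ normalization in the symbol class. This is the bookkeeping encoded in the displayed formula: the $+2d$ and $+\tfrac12$ come from the new target/source dimensions ($H\times H$ has dimension $2d$, and $\R$ contributes the $\tfrac12$ half-density normalization), while $-\tfrac12(4d+1)$ subtracts the old contribution — $\R\times(H\times H)\times(H\times H)$ would have the wrong normalization relative to the intermediate spaces. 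I would verify this by carefully applying the Hörmander order-addition rule $\ord(A\circ B)=\ord A+\ord B$ for transversal compositions together with the convention $I^m(X\times Y,C)$ normalized so that $m$ is the order as an operator, and then simplify $\tfrac12(n-d)+2d+\tfrac12-\tfrac12(4d+1)=\tfrac12(n-d)+2d+\tfrac12-2d-\tfrac12=\tfrac12(n-d)$, which gives the stated $\rho(m,d)=\ord(e^{itP}\psi(Q_1))+\tfrac12(n-d)$.

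\medskip

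The main obstacle I expect is establishing that $\Gamma^1_{\psi,\epsilon}$ is genuinely a Lagrangian submanifold (not merely an isotropic set with excess) on the support of $B_\epsilon$, i.e. verifying the cleanliness/transversality of the fiber-product compositions with $\gamma_H$ precisely in the region where the cutoff is supported — and, dually, being honest that without the cutoff, near $N^*H$, the composition degenerates (this is the blow-down phenomenon the paper emphasizes, and it is why the proposition is stated only on $\supp B_\epsilon$). The remedy is the same transversality computation as in \cite[Prop.~4.7]{WXZ21}: parametrize $\Gamma^1_{\psi,\epsilon}$ by $(s,t,\zeta)$ as in Lemma~\ref{WFsPIc}, impose the three conditions $\zeta_M\in T^*_HM$, $G^{s+t}_M(\zeta_M)\in T^*_HM$, $(1-\chi_\epsilon)(G^{s+t}_M(\zeta_M))\neq0$, and check that the differentials of the constraint maps have maximal rank once $\chi_\epsilon$ is cut away, so the intersection defining the reduction is clean with zero excess. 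Since this is word-for-word parallel to the $c<1$ argument except for the simplifications noted above (the $\pi_H$ on $T^*H$-covectors being the identity, the redundancy of the tangential cutoff), I would invoke \cite{WXZ21} for the transversality bookkeeping and only write out the order computation and the two or three lines where the totally-geodesic hypothesis changes the picture.
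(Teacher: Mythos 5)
Your proposal is correct and follows essentially the same route as the paper, which omits the proof entirely and refers to the transversal/clean composition argument of \cite[Proposition 4.7]{WXZ21}: the restriction operators contribute $\tfrac14(n-d)$ each, the cutoff $B_\epsilon(x,D)$ guarantees the compositions are clean away from $N^*H$, and the dimensional normalization terms cancel exactly as in your computation, yielding $\rho(m,d)=\ord\, e^{itP}\psi(Q_1)+\tfrac12(n-d)$.
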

The statement and proof are essentially the same as in the case $c < 1$ of  \cite[Lemma Proposition 4.7]{WXZ21} and
we therefore omit the proof and refer there for the details.

Next we pullback under the diagonal embedding  to obtain the following is analogue of \cite[Proposition 5.2]{WXZ21}. 
  We use the notation,  $$\left\{ \begin{array}{l} \zeta_H = (q, \eta), \zeta_H' = (q', \eta'),\; G^s(q, \eta) = (q', \eta') \\ \\ \zeta = (\zeta_M, \zeta_H) =  (x, \xi, y, \eta'')  \in {\rm Char}(Q_1), (x,  \xi) \in T^*_H M,   G^{ s +t}_M(x, \xi) \in T^*_H M\end{array} \right. $$  

\begin{proposition} \label{M0PROP} The wave front relation of  $(\Delta_H \times \Delta_H)^*\Gamma^1_{\psi, \epsilon} \subset T^*\R \times T^*H \times T^*H$ is given by,
%\begin{multline*}
%(\Delta_H \times \Delta_H)^*\Gamma^1_{\psi, \epsilon}  =  \{(t, \tau, \zeta_H; \zeta_H') \in T^*\R \times T^*H \times %T^*H: \exists \zeta_M \in T^* H,  |\zeta_M|_g + \tau = 0,
%\\
%(1 - \chi_{\epsilon})(G^{t + s}(\zeta_M)) \neq 0, \\
%(\zeta_H, \zeta_H, \zeta_H', \zeta_H') = ( \pi_{H \times H} \zeta, \ \pi_{H \times H} (G^{ s +t}_M \times G^{-s }_{H}) %(\zeta)) \},
%\end{multline*}

\begin{multline*}
(\Delta_H \times \Delta_H)^*\Gamma^1_{\psi, \epsilon} = \{(t, \tau, (q,\eta -\pi_H \xi) ; (q', \eta' - \pi_H  \xi')) \in T^*\R \times T^*H \times T^*H: \\
\xi \in T_q M, \xi' \in T_{q'} M, \exists (s, \sigma): G^s(q, \eta) = (q', \eta'), G^{t +s}(q, \xi) = (q', \xi'),\\ 
\tau = - |\eta| = - |\xi|, |\eta'| = |\xi'|\}.
\end{multline*}
.\end{proposition}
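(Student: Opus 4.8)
The plan is to obtain $(\Delta_H \times \Delta_H)^* \Gamma^1_{\psi,\epsilon}$ by pulling back the Lagrangian $\Gamma^1_{\psi,\epsilon}$ of Proposition \ref{LRLEMPROP2} under the diagonal embedding $\Delta_H \times \Delta_H \colon H \times H \hookrightarrow (H \times H) \times (H\times H)$, which on the wave front level amounts to intersecting $\Gamma^1_{\psi,\epsilon}$ with the conormal bundle of the two diagonals and then summing the covector components that become conjugate under the restriction. Concretely, a point of $\Gamma^1_{\psi,\epsilon}$ is of the form $(t,\tau, \pi_{H\times H}\zeta, \pi_{H\times H}(G^{s+t}_M \times G^{-s}_H)(\zeta))$ with $\zeta = (\zeta_M,\zeta_H) = (x,\xi,y,\eta'') \in \Char(Q_1)\cap T^*_{H\times H}(M\times H)$ and $G^{s+t}_M(\zeta_M) \in T^*_H M$. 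First I would unwind this: the $H$-fiber part $\pi_{H\times H}\zeta$ contributes a covector $(q, \pi_H \xi)$ on the first copy of $H$ coming from the restriction of $\zeta_M$, together with $(q,\eta)$ coming from $\zeta_H$ — here writing $q = x$ (the footpoint lies on $H$) and $\eta = \eta''$; similarly the terminal part gives $(q', \pi_H\xi')$ and $(q',\eta')$ where $(x',\xi') = G^{s+t}_M(\zeta_M)$, $q' = x'$, and $(q',\eta') = G^s_H(q,\eta)$ since $\zeta_H$ is evolved by $G^{-s}_H$ and we read off the relation in the forward direction.

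The next step is the pullback under the diagonal. Restricting a Schwartz kernel to the diagonal $\Delta_H$ corresponds, microlocally, to the composition with the (half-density) canonical relation of $\Delta_H^*$, whose effect is: keep the common footpoint, and \emph{add} the two fiber covectors at that footpoint with the appropriate signs dictated by the kernel convention. Since $\Gamma^1_{\psi,\epsilon}$ sits inside $T^*\R \times (T^*H)^4$ as a relation between an input pair $(\zeta_H, \pi_H\zeta_M)$-type data on $H\times H$ and an output pair on $H\times H$, pulling back under $\Delta_H$ on the input side forces the two input footpoints to coincide (automatic here, both equal $q$) and replaces the two input covectors $(q,\eta)$ and $(q,\pi_H\xi)$ by their difference $(q, \eta - \pi_H\xi)$; symmetrically on the output side one gets $(q', \eta' - \pi_H\xi')$. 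The sign (difference rather than sum) is forced by the fact that $\gamma_{H\times H}^*$ appears on the right in \eqref{LRLEM2} and $\gamma_{\R\times H\times H}$ on the left, i.e. one diagonal is a pullback and the other a pushforward; this is exactly parallel to the bookkeeping in \cite[Proposition 5.2]{WXZ21} and I would cite that computation rather than redo it. The constraints carried along are precisely those already present: $\xi \in T_q M$, $\xi' \in T_{q'}M$, existence of $s$ with $G^s_H(q,\eta) = (q',\eta')$ — which, since $H$ is totally geodesic, is the same as $G^s_M(q,\eta) = (q',\eta')$ — and existence of $t+s$ with $G^{t+s}_M(q,\xi) = (q',\xi')$; the characteristic condition $|\eta|_{g_H} = |\xi|_g$ from $\Char(Q_1)$ together with the space-time constraint $\tau + |\zeta_M|_g = 0$ becomes $\tau = -|\eta| = -|\xi|$, and flow-invariance of the norm gives $|\eta'| = |\xi'|$.

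To organize the writeup I would proceed in three short steps: (1) recall that $\Delta_H^*$ (pullback to the diagonal) has the canonical relation $\{(q,\eta_1+\eta_2;\, q,\eta_1;\, q,\eta_2)\}$ up to the standard half-density factor, so that the clean/transversal composition $(\Delta_H\times\Delta_H)^*$ applied to a Lagrangian in $T^*\R\times(T^*H)^4$ outputs a Lagrangian in $T^*\R\times(T^*H)^2$ obtained by imposing equality of the paired footpoints and adding (with the convention-dictated sign) the paired covectors; (2) substitute the explicit description of $\Gamma^1_{\psi,\epsilon}$ from Proposition \ref{LRLEMPROP2} and read off the footpoints $q,q'$, the evolved covectors, and the combined covectors $\eta-\pi_H\xi$, $\eta'-\pi_H\xi'$; (3) collect the surviving constraints, using total geodesy of $H$ to identify $G^s_H$ with $G^s_M|_{T^*H}$ where needed, and use homogeneity/flow-invariance to get the norm relations on $\tau$. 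The main obstacle I anticipate is purely a matter of sign and convention bookkeeping: getting the $\eta - \pi_H\xi$ (rather than $\eta + \pi_H\xi$) correct, tracking which diagonal is a pullback and which a pushforward given the propagator convention $U(t)=e^{itP}$ integrated against $e^{-it\lambda}$, and checking that the composition is genuinely clean (so that the stated wave front relation is the whole answer, not just an upper bound) — but cleanliness here is inherited from the transversality of $e^{itP}\circ\psi(Q_1)$ noted after Lemma \ref{WFsPIc} together with the transversality of the diagonal embeddings to the projected relation, exactly as in \cite[Section 5]{WXZ21}, so no new geometric input beyond what is already in the excerpt is required.
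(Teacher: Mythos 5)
Your proposal is correct and follows essentially the same route as the paper: pull back $\Gamma^1_{\psi,\epsilon}$ under the double diagonal, which (as in \cite[(1.20)]{DG75} and \cite[Proposition 5.2]{WXZ21}) identifies the paired footpoints and subtracts the paired covectors, then unwind the constraints from $\Char(Q_1)$, the flows, and total geodesy to get $\eta-\pi_H\xi$, $\eta'-\pi_H\xi'$ and $\tau=-|\eta|=-|\xi|$, $|\eta'|=|\xi'|$. The only differences are cosmetic (your sign justification via "pullback vs.\ pushforward" is really the usual twist in the kernel's canonical relation, and your cleanliness remark is an extra precaution the paper does not spell out here), so no substantive divergence.
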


\begin{remark} This wave front set consists of analogues for bi-angles of geodesic loops. A geodesic loop 
of length $T$ at a point $x \in M$ is given by a geodesic arc $\exp_x t \xi$ satisfying $\exp_x T \xi = x$. Unlike
a closed geodesic, the initial and term directions do not have to be the same. A bi-angle is the analogue of a closed
geodesic but the `bi-angle-loop'  consists of two geodesic arcs, an $M$-arc and an $H$-arc  from $q$ to $q'$, with 
no constraint that the projection of the initial or terminal directions of the $M$ arc agree with those of the $H$ arc.

\end{remark}

\begin{proof} The calculation is similar to that of  \cite[(1.20)]{DG75} for the pullback to the `single diagonal' in $M \times M$. The pullback to the `double-diagonal' 
 $\Delta_{H \times H} \subset H \times H \times H \times H$ subtracts  the two covectors at the same base points in the double-diagonal.

  In terms of the above notation,
 \begin{multline*}
(\Delta_H \times \Delta_H)^*\Gamma^1_{\psi, \epsilon} = \{(t, \tau, (q,\eta -\pi_H \xi) ; (q', \eta' - \pi_H  \xi')) \in T^*\R \times T^*H \times T^*H: \exists s \\ 
(t, \tau, (q, \eta), (q, \pi_H \xi), (G^s(q, \eta), \pi_H G^{t + s}(q, \xi)))  \in \Gamma^1_{\psi, \epsilon}\} \\ \\
= 
\{(t, \tau, (q,\eta -\pi_H \xi) 
\exists (s, \sigma, \pi_{H \times H} (x, \xi, y, \eta'') , \pi_{H \times H} (G^{ s +t}_M \times G^{-s }_{H}) ((x, \xi, y, \eta'') )) \in \Gamma^1_{\psi, \epsilon}, \\
(q, q, q', q') =  (x, y, \pi G_M^{s + t}(x, \xi), \pi G_H^{-s}(y, \eta'') ), \\
(\zeta_H, \zeta_H') =   (\Delta_H \times \Delta_H)^*  (\pi_{H \times H} \zeta, \pi_{H \times H} (G^{ s +t}_M \times G^{-s }_{H}) (\zeta))) \} \end{multline*}

Indeed, on   the base $M \times H$, the pullback restricts to the double diagonal,
\begin{multline*}
	\Delta_H \times \Delta_H (q, q') =(q,q, q',q') = \pi  \left(\pi_{H \times H} \zeta, \pi_{H \times H} (G^{ s +t}_M \times G^{-s }_{H}) (\zeta)\right) \\
\iff q= x = y, \;\; q' =  \pi G^{s + t}_M(x, \xi) = \pi G^{-s}_H (y, \eta'').   \end{multline*}
  Moreover, on  the (co-)vector level, $\eta'' = \eta$  and,
   $$\begin{array}{lll} (\zeta_H, \zeta_H') & = &  ( \Delta_H \times \Delta_H)^*\pi_{H \times H} (\zeta, (G_M^{t + s} \times G_H^{-s})^* (\zeta)), 
   \\&&\\ &\iff & 
   \pi_H (q, \xi) = (q, \eta) ,  \; \pi_H G_M^{t + s} (q, \xi) = (q', \eta') = G_H^{-s} (q, \eta).   
   \end{array} $$  
   These conditions imply that $(q, \xi) \in S^* H$, since $\pi_H \xi = \eta$ and $ |\xi| = |\eta|$.
\end{proof}

The next step is to pushforward under $\Pi: \R \times H \times H \to \R$, which results in `closing' the bi-angle-loop wave front set to the set of 
`closed bi-angles'.

\begin{proposition} \label{MAINFIOPROP} The pushforward wave front set is given by, \begin{align} \nonumber
\Lambda^1_{\psi} &:= \Pi_*(\Delta_H \times \Delta_H)^*\Gamma^1_{\psi, \epsilon} \subset T^*\R \\
\label{PUSHPULL} &= \{(t, \tau) \in T^*\R: \exists \{(q, \eta): G^t(q, \eta) = (q, \eta)\} \end{align}
\end{proposition}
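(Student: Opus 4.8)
The plan is to compute the pushforward $\Pi_*(\Delta_H \times \Delta_H)^*\Gamma^1_{\psi,\epsilon}$ by simply applying the general recipe for pushforward of a wave front set (or of a Lagrangian) under a submersion $\Pi: \R \times H \times H \to \R$: a covector $(t,\tau) \in T^*\R$ lies in the pushforward if and only if $(t,\tau;0,0) \in (\Delta_H\times\Delta_H)^*\Gamma^1_{\psi,\epsilon}$, i.e. if and only if the $H\times H$ covector components of the pulled-back relation vanish. Concretely, starting from the description of $(\Delta_H\times\Delta_H)^*\Gamma^1_{\psi,\epsilon}$ in Proposition \ref{M0PROP}, I would set the two base-fiber covectors $\eta - \pi_H\xi$ and $\eta' - \pi_H\xi'$ equal to $0$ and read off the surviving constraints.

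First I would recall from Proposition \ref{M0PROP} that a point of $(\Delta_H\times\Delta_H)^*\Gamma^1_{\psi,\epsilon}$ has the form $(t,\tau,(q,\eta-\pi_H\xi);(q',\eta'-\pi_H\xi'))$ subject to: there exist $s$ with $G^s_H(q,\eta)=(q',\eta')$ and $G^{t+s}_M(q,\xi)=(q',\xi')$, together with $\tau = -|\eta| = -|\xi|$ and $|\eta'|=|\xi'|$; moreover the proof of Proposition \ref{M0PROP} already established $\pi_H\xi=\eta$, hence $(q,\xi)\in S^*H$ and, since $H$ is totally geodesic, $G^{t+s}_M(q,\xi) = G^{t+s}_H(q,\xi)$. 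Imposing the pushforward condition $\eta-\pi_H\xi=0$ and $\eta'-\pi_H\xi'=0$: the first is automatic from $\pi_H\xi=\eta$; the second forces $\pi_H\xi' = \eta'$, and combined with $G^s_H(q,\eta)=(q',\eta')$ and $G^{t+s}_H(q,\xi)=(q',\xi')$ this gives $G^s_H(q,\eta) = \pi_H G^{t+s}_H(q,\xi) = G^{t+s}_H(q,\xi)$ (the last equality because the flow stays in $T^*H$ when started there and $H$ is totally geodesic, so $\pi_H$ acts trivially). Thus $G^s_H(q,\eta) = G^{t+s}_H(q,\eta)$, i.e. $G^t_H(q,\eta)=(q,\eta)$: the $s$-dependence cancels, leaving exactly the periodicity condition \eqref{GEO2}.

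Conversely, I would check that any $(t,\tau)$ with $\tau = -|\eta|$ for some $(q,\eta)$ satisfying $G^t_H(q,\eta)=(q,\eta)$ is realized: pick any $s \in \supp\hat\psi$ (the interval contains $0$ by hypothesis, so this is nonempty), set $(q',\eta')=(q',\xi')=G^s_H(q,\eta)$ and $\xi = \eta$ (viewed in $T^*_qM$ via the totally geodesic inclusion); the cutoff condition $(1-\chi_\epsilon)(G^{s+t}_M(\zeta_M))\ne 0$ is satisfied because $G^{s+t}_M(q,\xi)=G^{s+t}_H(q,\eta)\in T^*H$, which is transverse to $N^*H$ and hence lies off the support of $\chi_\epsilon$. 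This produces a point of $(\Delta_H\times\Delta_H)^*\Gamma^1_{\psi,\epsilon}$ with vanishing $H\times H$ covectors, so $(t,\tau)$ is in the pushforward. Combining the two inclusions yields
$$\Lambda^1_\psi = \{(t,\tau)\in T^*\R : \exists (q,\eta)\in S^*H,\ G^t_H(q,\eta)=(q,\eta),\ \tau = -|\eta|_g\},$$
which is the asserted set \eqref{PUSHPULL} (with the homogeneity in $|\eta|$ understood, so that one may equivalently write $\exists\{(q,\eta): G^t(q,\eta)=(q,\eta)\}$).

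The one point that requires genuine care — rather than bookkeeping — is the verification that the pushforward of a conic Lagrangian (or wave front set) under the submersion $\Pi$ really is computed by this "set the dropped fiber variables to zero" rule, and that the cleanliness hypotheses needed for the composition to stay within the FIO calculus are met; this is where the blow-down degeneracy at $s=0$ discussed in Section \ref{COMPSECTINTRO} could in principle interfere. However, since we only need the \emph{underlying set} $\Lambda^1_\psi$ and not the order or symbol of the resulting distribution, the set-level computation above suffices, and the degeneracy affects only the amplitude, treated separately in Sections \ref{MODELSECT} and \ref{HADPAR}. So the main obstacle is conceptual packaging rather than a hard estimate: one must argue that the cancellation of $s$ is forced (not merely possible) for every point of the pushforward, which is exactly the content of the forward inclusion above.
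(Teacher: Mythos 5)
Your argument is correct and follows essentially the same route as the paper: the pushforward under $\Pi$ retains exactly those points of $(\Delta_H\times\Delta_H)^*\Gamma^1_{\psi,\epsilon}$ whose $H\times H$ covector components $\eta-\pi_H\xi$ and $\eta'-\pi_H\xi'$ vanish (the rule of \cite[(1.21)]{DG75}), and then the totally geodesic hypothesis identifies the $M$-flow with the $H$-flow on $T^*H$ so that $G^{t+s}(q,\eta)=G^s(q,\eta)$ and the $s$ cancels, yielding \eqref{GEO2}. Your write-up merely makes explicit the converse inclusion and the cutoff verification that the paper leaves implicit, so there is nothing substantively different to compare.
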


\begin{proof} As in  \cite[(1.21)]{DG75}, the pushforward
operation erases  points of $$ (\Delta_H \times \Delta_H)^*\Gamma^1_{\psi, \epsilon} = \{(t, \tau, (q,\eta -\pi_H \xi) ; (q', \eta' - \pi_H  \xi'))\} $$
unless $\eta -\pi_H \xi) =  \eta' - \pi_H \xi' = 0$ and the output for such vectors is $(t, \tau)$. Since $H$ is totally geodesic,
and $\pi_H \xi = \eta, $ one has $G^{t +s} (q, \eta) = G^s (q, \eta)$.  Cancelling the $s$ factors gives the result.

\end{proof}

\subsection{Cleanliness issues}
We recall that except for the last statement of  Theorem \ref{main 1} and Theorem \ref{main 2}, it is assumed 
that $\rm{supp} \hat{\psi} , \rm{supp} \hat{\rho} \subset (-r_0, r_0)$. The wave front calculations above explain the
difficult cleanliness issues that arise if we drop the assumption that $ \rm{supp} \hat{\rho} \subset (-r_0, r_0)$. 
In the intermediate wave front set calculations, one would need that 
the set of arcs from $q$ to $q'$ of arbitrary length form clean submanifolds. This is obviously true for $H$ arcs or $M$-arcs of lengths in $(-r_0, r_0)$, where the arc from $q$ to $q'$ is unique but is rarely true for long arcs. 
In particular, there are problems if  $q,q'$ are  conjugate points along an $H$ arc.

It is possible that the wave front relations are not Lagrangian submanifolds until the last step, where the the set of
arcs consists of closed geodesics. In that case, we would need that the fixed point sets $\rm{Fix}G^t_H$ are all 
clean in the sense of \cite{DG75}; this is substantially simpler than cleanliness of the arc-sets in the intermediate steps, but is still non-generic.

\subsection{The order of \eqref{SpsiDEF}  at $t=0$}
We now  calculate the order of \eqref{SpsiDEF} after  further composing with the diagonal pullback to
 $(H \times H) \times (H \times H)$ and then the pushforward to $\R$.     The trace is the composition defined by the fiber product diagram
of $\Gamma^1_{\psi, \epsilon}$ with the conormal bundle of the diagonal of $H \times H$ (see \cite[(8.6)]{WXZ21}). 
Note that the order at $t=0$ is quite different from the case $c < 1$ in \cite{WXZ21},
 and justifies including the  material in the previous section.

In a neighborhood of  $t=0$, \emph{?}qref{SpsiDEF} is a Lagrangian distribution and we may calculate its order using
the calculus of Fourier integral operators despite the blow-down singularity at $s=0$.

\begin{lemma} \label{ORDERLEM}   As in \cite[(5.10)]{WXZ21}, if $\hat{\psi} =0$ in an interval around $s=0$, then  the order of $S(t, \psi)$ is 
 $\frac{-3}{4} + \half (n-d)  + \half \dim \gcal_1^T$, where $\gcal_1^T = \rm{Fix}(G^t_H)$. From the fact that $\dim \gcal_1^0 = \dim S^*H = 2d -1$,  $$\rm{ord} S(t, \psi) |_{t =0} = -\frac{3}{4} + \half(n-d)
+ d - \half = -\frac{5}{4}  + \frac{n+d}{2}. $$
Recall that $I^{\frac{\nu}{2} - \frac{1}{4}}$ has symbols of order $s^{\frac{\nu - 1}{2}}$.  \end{lemma}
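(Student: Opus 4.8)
\textbf{Proof proposal for Lemma \ref{ORDERLEM}.}

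The plan is to track the order of the Fourier integral operator through the chain of compositions recorded in Section \ref{WFSECT}, starting from the known orders of $e^{itP}$ and $\psi(Q_1)$ and applying the standard order-counting rules for transversal compositions, pullbacks by submanifold embeddings, and pushforwards. First I would record the starting point: by Lemma \ref{WFQc}, $\psi(Q_1) \in I^{-1/2}$, and $e^{itP} \in I^{-1/4}$ by \cite{DG75}; since the composition $e^{itP}\circ\psi(Q_1)$ is transversal (as noted after Lemma \ref{WFsPIc}), the composed operator has order $-1/4 + (-1/2) = -3/4$, i.e.\ $\ord\, e^{itP}\psi(Q_1) = -3/4$. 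This is the input already quoted in the displayed formula for $\rho(m,d)$ in Proposition \ref{LRLEMPROP2}.

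Next I would invoke Proposition \ref{LRLEMPROP2} directly: restricting via $\gamma_H\otimes I$ on both sides and inserting the cutoff $B_\epsilon$ away from $N^*H$ produces an operator in the class $I^{\rho(m,d)}$ with $\rho(m,d) = \ord\, e^{itP}\psi(Q_1) + \tfrac12(n-d) = -\tfrac34 + \tfrac12(n-d)$. Here the jump by $\tfrac12(n-d)$ rather than the value that would appear for $c<1$ is precisely the manifestation of the constraint $\xi\in S^*H$ forcing the conormal component to vanish; this is the order computation that \cite{WXZ21} does not supply for $c=1$, and it is what makes repeating the section necessary. I would then pull back along the diagonal embedding $\Delta_H\times\Delta_H$: this is a pullback along an embedding of $H\times H$ into $(H\times H)\times(H\times H)$, of codimension $2d$, which raises the order by $\tfrac14\cdot(2d)\cdot 2 \cdot \tfrac12$-type bookkeeping — more precisely, following \cite[(8.6)]{WXZ21} and the DG75 conventions, one obtains the order on $T^*\R\times T^*H\times T^*H$ from Proposition \ref{M0PROP}. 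Finally I would push forward along $\Pi:\R\times H\times H\to\R$ (fibre dimension $2d$), which by the pushforward rule lowers the order by half the fibre dimension but is compensated by the dimension of the critical manifold. The cleanest way to organize the last two steps, used in the lemma statement, is the formula: for $\hat\psi$ vanishing near $s=0$, the wave front set at $t$ is the fixed point set $\gcal_1^T = \mathrm{Fix}(G^t_H)$, and the order of $S(t,\psi)$ is $-\tfrac34 + \tfrac12(n-d) + \tfrac12\dim\gcal_1^T$, this being the standard clean-intersection order formula of \cite[Proposition 2.1]{DG75} applied to the trace composition. Specializing at $t=0$, where $\gcal_1^0 = S^*H$ has dimension $2d-1$, gives
\[
\ord\, S(t,\psi)\big|_{t=0} = -\tfrac34 + \tfrac12(n-d) + \tfrac12(2d-1) = -\tfrac54 + \tfrac{n+d}{2},
\]
and the final sentence about $I^{\nu/2-1/4}$ having symbols of order $s^{(\nu-1)/2}$ is just the translation between the Hörmander order index and the symbol growth, which I would state as a consequence of the normalization conventions.

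The main obstacle — and the reason the lemma is stated under the hypothesis that $\hat\psi$ vanishes near $s=0$ — is that the clean-composition order calculus is not valid across $s=0$ because of the blow-down singularity identified in Section \ref{BIANGLESECT}: the fibres of the conormal sphere bundle collapse and the composition ceases to be clean. So the honest proof only establishes the order away from the blow-down, and one must separately argue (as the surrounding text promises to do in Sections \ref{MODELSECT} and \ref{FULLAMP}) that reinstating the $s=0$ contribution does not raise the order. For the purposes of this lemma, though, I would simply note that in a neighborhood of $t=0$ the distribution $S(t,\psi)$ is Lagrangian and the order is computed by the calculus as above, with the caveat that the coefficient (not the order) is affected by the $s=0$ piece; the order statement itself is robust because the model analysis of Section \ref{MODELSECT} shows the blow-down contributes at the same order, not a higher one. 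Thus the subtlety is entirely in the coefficient, and the order computation reduces to the routine bookkeeping of transversal composition, embedding pullback, and submersion pushforward outlined above.
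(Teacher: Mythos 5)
Your proposal is correct and follows essentially the same route as the paper: the paper's proof likewise invokes cleanliness away from $s=0$ and computes the order exactly as in \cite{DG75} and \cite[(5.7)-(5.8)]{WXZ21}, with the only change being the excess (equivalently, the dimension of the fixed point set $\gcal_1^T$) at $c=1$, which is the same bookkeeping you carry out via Lemma \ref{WFQc}, Proposition \ref{LRLEMPROP2}, and the clean-intersection trace formula. Your closing remarks about the blow-down at $s=0$ and the role of the hypothesis on $\hat\psi$ match the paper's framing as well.
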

 
 \begin{proof}
 The composition is clean away from the singularity at $s=0$. The  order and composition are therefore  computed precisely as in 
\cite{DG75} and \cite[(5.7)-(5.8)]{WXZ21}.  However the excess $e(0)$ is different and results in the different order.
\end{proof}

\section{\label{main1SECT} Proof of Theorem \ref{main 1} and Theorem \ref{main 2} using H\"ormander parametrices}

In this section, we use the H\"ormander  parametrices of the wave proof   to proveTheorem \ref{main 2} for $\psi$ such that $\hat{\psi} = 0$ in some
interval around $0$. Although this does not clarify the regularization at $s=0$, its phase is simple and exhibits  the blow-down singularity responsible
for the singular coefficient in Theorem \ref{main 1}. In fact, it is precisely the phase of a certain Bessel-type integral
that we refer to as a ``double-Bessel function" \eqref{DB}. In Section \ref{MODELSECT} we build on this parametrix approach to construct model
oscillatory integrals. We then reduce the actual problem to the model case. The H\"ormander  parametrix method is also used in \cite{WXZ21} and 
makes it easy to compare with the case $c < 1$ (see Section \ref{COMPSECT}).

\subsection{\label{HORPAR} H\"ormander small time parametrices}

In this section we review the H\"ormander parametrix of the half-wave kernel and use it to derive the following,

\begin{proposition}\label{N1PARAM}
Let $(M, g)$ be any compact Riemannian manifold of dimension $n$, and let $H \subset M$  be a totally geodesic
submanifold  of dimension $d \leq n-1$. Then, there exists a semi-classical amplitude $\wt A( \langle y,\omega \rangle, \langle y, \wt \omega \rangle,  q, y,  \omega,   \wt \omega) $ of order zero such that,
\begin{equation} \label{FV} \begin{array}{l} N^{1} _{\psi, \rho, H  }(\lambda) = \lambda^{n + d -2} \int_0^{\infty}
\int_H \int_{T_q H} \int_{S_q^* H} \int_{S_q^* M} \hat{\psi}( \langle y, \wt \omega \rangle) \hat{\rho}( \langle y, \omega  \rangle)   e^{i \lambda  \langle y, \omega + \wt \omega \rangle } \\ \\ \wt A( \langle y,\omega \rangle, \langle y, \wt \omega \rangle,  q,  \omega,  c \wt \omega) \;  dV_H(y)  dV_H(q)  d S_q(\omega)   d S_q(\wt \omega), \end{array}\end{equation}
When $d=1$, $S_q^*H = \{\pm e_1\}$ and the integral is a sum over $\pm$.  
\end{proposition}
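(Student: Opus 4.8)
The plan is to insert the Hörmander small-time parametrix for the half-wave group $e^{itP_M}$ and for $e^{isP_H}$ (the latter available because $H$ is totally geodesic, so a geodesic normal coordinate patch on $M$ restricts to one on $H$), and to track the resulting oscillatory integral through the two Fourier localizations built into $N^1_{\psi,\rho,H}$. Starting from the identity $N^1_{\psi,\rho,H}(\lambda)=\frac{1}{2\pi}\int_{\R}\int_{\R}\hat\rho(t)\hat\psi(s)\,e^{-it\lambda}\,S(s,t)\,ds\,dt$ with $S(s,t)$ as in \eqref{S(s,t)DEF}, I would substitute the parametrix $\gamma_H U_M\gamma_H^*(t,q,q')\sim \int_{\R^n} e^{i(\langle \exp_q^{-1}q',\xi\rangle + t|\xi|_q)} a(t,q,q',\xi)\,d\xi$ and the analogous formula on $H$ for $U_H(-s,q,q')$. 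Since $\mathrm{supp}\,\hat\rho,\mathrm{supp}\,\hat\psi\subset(-r_0,r_0)$ with $r_0<\mathrm{inj}(M,g)$, the exponential maps $\exp_q$ (on $M$) and $\exp_q|_{T_qH}$ (on $H$) are diffeomorphisms onto their images on the relevant balls, so $\exp_q^{-1}q'$ is well defined and smooth; this is exactly where the hypothesis on the supports is used.

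Next I would rescale the frequency variables $\xi\mapsto\lambda\xi$ in both the $M$-integral and the $H$-integral, converting the $\xi$-integrals over $\R^n$ and over $T_q^*H\cong\R^d$ into integrals over the co-spheres $S_q^*M$ and $S_q^*H$ times radial variables, and carrying out the stationary phase / Fourier inversion in $t$ against $e^{-it\lambda}$. The $t$-integral forces the $M$-frequency to have modulus $\lambda$ (reproducing the $\lambda_j\le\lambda$ truncation as a smooth cutoff via $\hat\rho$), and after integrating out the radial $H$-frequency the phase collapses to $\lambda\langle y,\omega+\wt\omega\rangle$ where $y=\exp_q^{-1}(q')\in T_qH$ is the new integration variable replacing $q'\in H$ (change of variables $q'\mapsto y$, whose Jacobian is absorbed, together with the volume densities $\Theta_M,\Theta_H$, into the amplitude $\wt A$). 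The arguments $\langle y,\omega\rangle$ of $\hat\rho$ and $\langle y,\wt\omega\rangle$ of $\hat\psi$ appear because $\langle y,\omega\rangle$ is the $M$-geodesic distance coordinate conjugate to the $t$-variable and $\langle y,\wt\omega\rangle$ is the $H$-geodesic arc-length conjugate to the $s$-variable. Careful bookkeeping of the powers of $\lambda$: the $n$-dimensional and $d$-dimensional frequency rescalings give $\lambda^n\cdot\lambda^d$, the two half-wave parametrix normalizations and the $dt$-inversion contribute $\lambda^{-2}$, yielding the claimed prefactor $\lambda^{n+d-2}$, with the remaining amplitude of semiclassical order zero. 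When $d=1$ the co-sphere $S_q^*H$ is the two-point set $\{\pm e_1\}$ and the $dS_q(\wt\omega)$ integral degenerates to a sum, as stated.

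The main obstacle I anticipate is the behaviour near $s=0$, i.e. near $y=0$ or $\wt\omega$-directions where $\langle y,\wt\omega\rangle$ is small: here the $H$-distance-sphere collapses (the "blow-down" singularity flagged in Sections \ref{COMPSECTINTRO}, \ref{BIANGLESECT}), the stationary phase reduction of the radial $H$-frequency is not uniform, and one cannot simply invoke the clean-composition FIO calculus. For the purposes of this proposition, however, one only needs the \emph{existence} of the representation \eqref{FV} with $\wt A$ of order zero, not yet the evaluation of its value at $s=0$; so I would argue that away from a small neighbourhood of $s=0$ the stationary phase expansion is standard and produces a classical symbol, while near $s=0$ the radial $H$-integral can be carried out exactly (it is essentially the model double-Bessel integral \eqref{DB}) and still yields a smooth amplitude in the remaining variables — deferring the delicate regularization of $(s+i0)^{-(n-d)/2}$, and the identification of the full symbol of $\wt A$, to Sections \ref{MODELSECT}, \ref{FULLAMP} and \ref{HADPAR}. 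The secondary technical point is checking that the $B_\epsilon(x,D)$ cutoff away from $N^*H$ may be dropped (as remarked after \eqref{LRLEM}, in the totally geodesic case the conormal directions are harmless because $G^{t+s}_M$ and $G^s_H$ agree on $T^*H$), so no $\chi_\epsilon$ factor appears in $\wt A$.
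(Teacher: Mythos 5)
Your proposal is correct and follows essentially the same route as the paper: insert the H\"ormander small-time parametrices for both $U_M$ and $U_H$, rescale $\xi\to\lambda\xi$, $\eta\to\lambda\eta$, pass to polar coordinates, eliminate the pairs $(t,\rho)$ and $(s,\sigma)$ (each with unit Hessian, contributing $\lambda^{-1}$ apiece), and use that $H$ is totally geodesic to identify $(\exp^M_q)^{-1}(q')=(\exp^H_q)^{-1}(q')=y\in T_qH$ so the phase collapses to $\langle y,\omega+\wt\omega\rangle$ with $\hat\rho(\langle y,\omega\rangle)\hat\psi(\langle y,\wt\omega\rangle)$ and a zero-order amplitude. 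The only superfluous worry is uniformity near $s=0$: those two stationary-phase reductions are non-degenerate uniformly in $y$, so no special treatment is needed here, and the blow-down issue only affects the later $(y,\omega)$ analysis, exactly as your deferral assumes.
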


\begin{proof}
On any Riemannian manifold, we  may construct  small time parametrix for $U(t) = e^{i t \sqrt{-\Delta}}$ an oscillatory
integral of the form,

\begin{equation} \label{Ut} U(t, x, y) = \int_{T^*_x M} e^{i \langle \exp_x^{-1} (y), \xi \rangle} e^{it |\xi|} A(t, y, \xi) d \xi,
\end{equation}
where $A(t, x, y, \xi) $ is a homogeneous amplitude of order $0$ and supported in the set $r(x, y) \leq t + \delta$ for any small $\delta> 0$. 
Here, $\exp_x : B_x(\epsilon) \subset T_x M \to M$ is the exponential map and $B_x(\epsilon)$ is a sufficiently small ball so that $\exp_x$
is a diffeomorphism to its image. 
The amplitude
is independent of $x$  and f or  $|t| < \rm{inj}(M,g)$ satisfies, for all  $(y, \xi) \in T^*M$,
\begin{equation} \label{AMP} \left\{ \begin{array}{l} 
 A(0, y, \xi) = 1,  \\ \\ 
A(t, y, \xi ) -1  \in S^{-1}. 
\end{array} \right. \end{equation}

Then, if $\hat{\rho}$ and $\hat{\psi}$ have sufficiently small support so that the parametrix \eqref{Ut} is valid for both $U_M(t), U_H(s)$,
\begin{equation} \label{N1HFORM} \begin{array}{lll} N^1_{H, \psi, \rho}(\lambda)  & = &  \int_{\R} \int_{H \times H} \hat{\rho}(t) e^{it \lambda} \Pi_{H \times H} \Delta_{H \times H}^* \gamma_H \psi(Q_1) e^{it P} \gamma_H^*
dt dV_H(q) dV_H(q')\\ &&\\
& = &  \int_{\R} \int_{\R} \int_{H \times H}  \hat{\psi}(s) \rho(t)  e^{it \lambda} 
U_H(-s, q, q') U_M( t + s, q, q') dV_H(q)    dV_H(q') ds dt  \end{array} \end{equation}
 has the parametrix, 
\begin{equation}\begin{array}{l} 
 \int_{H \times H}  \int_{T_q H } \int_{\R}\int_{\R} \int_{T_q^* M} \int_{T_{q'}^* H} \hat{\psi}(s) \hat{\rho}(t)  
 e^{it \lambda} e^{i \Psi_1}
\wt A_1(t, s, q, q', \xi, \eta) 
 dV_H(q) dV_H(q') ds dt d \xi d \eta.
\end{array} 
\end{equation}
where 
$$ \Psi_1 = \langle  (\exp^M_{q})^{-1} (q'), \xi \rangle + (\exp^H_{q})^{-1} (q'), \eta \rangle + ( t +  s)  |\xi| -   s |\eta|,  $$
and where $$\wt A_1(t, s, q, q', \xi, \eta) = A_M( t+  s , q, q', \xi) A_H( - c s , q, q', \eta).$$

We change variables $\xi \to \lambda \xi, \eta \to \lambda \eta$ and obtain a semi-classical oscillatory integral,
\begin{equation}\label{COV1} \lambda^{n + d}
\int_{H \times H} \int_{\R} \int_{\R} \int_{T_q^* M} \int_{T_{q'}^* H}\hat{\psi}(s) \hat{\rho}(t)  e^{it \lambda} e^{i \lambda \Psi_1 } \wt A(t, s, q, q', \xi, \eta) \;   dV_H(q) dV_H(q') ds dt d \xi d \eta. \end{equation}

To simplify the integral, we  set $\xi = \rho \omega, \eta = \sigma \wt \omega$ and change variables to get the phase,
$$\Psi_2(t, s, q, q', \rho, \omega, \sigma, \wt \omega): =  t(1 - \rho) + s( \rho - \sigma) + \rho 
 \langle (\exp^M_{q})^{-1} (q'), \omega  \rangle + \sigma \langle (\exp^H_{q})^{-1} (q'), \wt \omega \rangle. $$
 We eliminate the pair of variables $dt d \rho$ by stationary phase, which introduces a new factor $\lambda^{-1}$ and
 sets $\rho =1, t =  \langle (\exp^M_{q})^{-1} (q'), \omega  \rangle. $ The Hessian determinant equals $1$, reducing the integral to, 
 \begin{equation}\begin{array}{l} \lambda^{n + d -1}\int_0^{\infty} 
\int_{H \times H}  \int_{\R} \int_{S_q^* M} \int_{S_{q'}^* H}\hat{\psi}(s) \hat{\rho}(\langle (\exp^M_{q})^{-1} (q'), \omega  \rangle)   e^{i \lambda \Psi_2 } \\ \\ \wt A_2( \langle (\exp^M_{q})^{-1} (q'), \omega  \rangle, s q, q', \xi, \eta) \;   dV_H(q) dV_H(q') ds  d S(\omega) \sigma^{d-1} d \sigma d S(\wt \omega),\end{array}
\end{equation}
where $$\Psi_2 = s (1 - \sigma) +  \langle (\exp^M_{q})^{-1} (q'), \omega  \rangle + \sigma \langle (\exp^H_{q})^{-1} (q'), \wt \omega \rangle,$$
and where the new amplitude $\wt A_2$ still satisfies  \eqref{AMP}.

Next we eliminate $d \sigma d s$ by stationary phase  to obtain $\sigma = 1$ and $s = \langle (\exp^H_{q})^{-1} (q'), \wt \omega \rangle$. The Hessian
again equals $1$ and the oscillatory integral reduces to 
and reduce to,

\begin{equation}\label{COV2}  \begin{array}{l} \lambda^{n + d -2} 
\int_{H \times H} \int_{S_q^* M} \int_{S_{q'}^* H}\hat{\psi}( \langle (\exp^H_{q})^{-1} (q'), \wt \omega \rangle) \hat{\rho}(\langle (\exp^M_{q})^{-1} (q'), \omega  \rangle)   e^{i \lambda \Psi_3 } \\ \\ \wt A_3( \langle (\exp^M_{q})^{-1} (q'), \omega  \rangle, \langle (\exp^H_{q})^{-1} (q'), \wt \omega \rangle q, q',  \omega,   \wt \omega) \;   dV_H(q) dV_H(q')   d S(\omega)   d S(\wt \omega), \end{array}\end{equation}

where
$$\Psi_3 (q, q', \omega, \wt \omega): =  
 \langle (\exp^M_{q})^{-1} (q'), \omega  \rangle +  \langle (\exp^H_{q})^{-1} (q'), \wt \omega \rangle, $$
 and where the new phase satisfies
 $$ \wt A_3( \langle (\exp^M_{q})^{-1} (q'), \omega  \rangle, \langle (\exp^H_{q})^{-1} (q'), \wt \omega \rangle q, q',  \omega,   \wt \omega) = 1, \;\;
 (\rm{when}\;\;q = q').$$

\subsubsection{ \label{HTG} $H$ totally geodesic} 
We now use that $H$ is totally geodesic for the first time. Since $q, q' \in H$,  $(exp^M_{q})^{-1} (q')= (exp^H_{q})^{-1} (q'),$ and we may combine terms 
to simplify the phase to,
$$\Psi (q, q', \omega, \wt \omega): =  
 \langle (\exp^H_{q})^{-1} (q'), \omega +   \wt \omega \rangle. $$
 For simplicity of notation, we henceforth denote $$ y  = (\exp^H_{q})^{-1} (q') \in T_q H, r = r_H(q,q'), y = r v$$
 and obtain the universal phase, 
 \begin{equation} \label{PSIDEF} \Psi (y, \omega; \wt \omega): = \langle y,  \omega +   \wt \omega \rangle, \;\; (y \in T_q^*, \omega \in S^*_q M, \wt \omega
 \in S^*_q H). \end{equation}
 Henceforth, we regard $(q, \wt \omega)$ as parameters and consider the oscillatory integral,
 \begin{equation} \label{N1q} \begin{array}{lll}  N^1_{H, \psi, \rho}(\lambda, q, \wt \omega) & = &  \lambda^{n + d -2} 
 \int_{S_q^* M} \int_{T_q H} \hat{\psi}( \langle (y,  \wt \omega \rangle) \hat{\rho}(\langle (y, \omega  \rangle)   e^{i \lambda \langle y, \omega + \wt \omega \rangle } \\ && \\  && \wt A( \langle y, \omega  \rangle, \langle y, \wt \omega \rangle, y,  \omega,   \wt \omega) \;   d y  d S(\omega),   \end{array}\end{equation}
 where $\wt A = 1$ when $y = 0$.
 
 This completes the proof of Proposition \ref{N1PARAM}.
 
 \end{proof}
 
\subsection{Analysis of the phase}

For purposes of comparison to model phases in Section \ref{MODELSECT}, we extract from  \eqref{FV}
the sub-integrals,
 \begin{equation} \label{FVJDEFa} \begin{array}{lll} J^1_{H, M} (\lambda, q,  r v,  \chi, \psi, \rho) &:=& 
 \int_{S_q^* H} \int_{S_q^* M} \hat{\psi}(r \langle v, \wt \omega \rangle) \hat{\rho}(r \langle v, \omega  \rangle)   e^{i r\lambda  \langle v, \omega + \wt \omega \rangle } \\ &&\\ &&\wt A( \langle v,\omega \rangle, \langle v, \wt \omega \rangle,  q, r v,  \omega,  c \wt \omega) \;     d S_q(\omega)   d S_q(\wt \omega). \end{array}\end{equation}
 As will be seen in Section \ref{MODELSECT}, they are closely related to Bessel (or, rather, double-Bessel)
 integrals \eqref{DB} and exhibit the same blow-down singularities. Their relation to \eqref{FV} is given by,
\begin{equation} \label{FVb} \begin{array}{l} N^{1} _{\psi, \rho, H  }(\lambda) = \lambda^{n + d -2} \int_0^{\infty}
\int_H \int_{S_q H} J^1_{H, M} (\lambda, q,  r v,  \chi, \psi, \rho)\;  r^{d-1} dr dV_H(q) dS_q(v). \end{array}\end{equation}

%We note that $q' = \exp_q y$ is the second point of $H$ in the integral formula for \eqref{cpsirho}.

As before, we regard the variables $(q, \wt \omega) \in S^*_q H$ as parameters and the the variables
$(q', \omega) \in H \times S^{n-1}$ as the ``phase variables'' of the oscillatory integral. We change variables as in that section, 
to write \begin{equation} \label{COORDS} q' = \exp_q y= \exp_q (r v), \;\;\; y = rv \in T_q H, v \in S_q H, r > 0, \;\; (y, \omega) \in T^*_q H \times S^*_q M. 
\end{equation} This is
possible since the short-time parametrix is used only for $(q, q') $ near the diagonal $\Delta_H \times H \times H$. It should be kept in mind that $y =0$ or $r=0$  corresponds to the diagonal in $H \times H$.
 As usual, we 
identify $T^*_q H = T_q H$ using the metric $g$ without further comment.

We therefore study the inner integral of Proposition \ref{N1PARAM},  
\begin{equation} \label{udef} \begin{array}{l}u(\lambda, q, \wt \omega): =\int_{T_q H}  \int_{S_q^* M}  \hat{\psi}( \langle y, \wt \omega \rangle) \hat{\rho}( \langle y, \omega  \rangle)   e^{i \lambda  \langle y, \omega + \wt \omega \rangle } \wt A \; dy    d S_q(\omega), \end{array} \end{equation}
where $\wt A  = ( \langle y,\omega \rangle, \langle y , \wt \omega \rangle,  q, y,  \omega,   \wt \omega)$. Recall that this parametrix expression is
only valid for $|t| $ smaller than the injectivity radius of $H$.

The   phase function of \eqref{FVJDEFa} or \eqref{udef} in the coordinates \eqref{COORDS}  is,  \begin{equation}\label{PSI} 
\Psi(\wt \omega; y, \omega): = \langle y, \omega - \wt \omega \rangle : S^*_q M \times T_q H   \times S^*_q H  \to \R. \end{equation}
Here, $q \in H$ is fixed and is not  indicated further in the notation for \eqref{udef};  henceforth, we view the phase \eqref{PSI}
as defined on $S^{d-1}_{\wt \omega} \times \R^d_y  \times S^{n-1}_{\omega} $.

%%%%%%%%%%%%%%%%%%
%%%%%%%%%%%%%%%%%%%%

In this section, we analyze the Lagrangian submanifold generated by the phase \eqref{PSI} and, in particular, 
the singularity of its projection at $y = 0$ (i.e. on the diagonal). In Section \ref{MODELSECT}, we show that the 
cubic Taylor approximation to \eqref{PSI} is the normal form phase function generating a Lagrangian submanifold
with a blow-down singularity. 
We follow \cite{DG75} and \cite{HoIV} for background on clean phase functions of homogeneous oscillatory integrals, 
\cite{D74} for background on semi-classical (non-homogeneous) oscillatory integrals with a large parameter, and \cite{G89} for background
on blow-down maps.

We recall (cf. \cite[Page 71]{DG75}) that a phase function $\phi(x, \theta)$ on $X \times \R^N$ is {\it clean} if $$C_{\phi}: = \{(x, \theta) \in X \times \R^N : \nabla_{\theta} \phi(x, \theta = 0\} $$  is a submanifold of $X \times \R^N$ and at each point of $C_{\phi}$
the tangent space is the space of vectors annihilated by $d (\frac{\partial \phi}{\partial \theta_1}), \dots, d (\frac{\partial \phi}{\partial \theta_N})$. The excess
$e$ of the clean phase is  defined so that  $N-e $ is the dimension of the space spanned by these differentials. When the phase is clean,
the map
$$\iota_{\phi} : C_{\phi} \to \Lambda_{\phi}  \subset T^*X, \;\; \iota_{\phi}(x, \theta) = (x, \nabla_x \phi) $$
is a fiber mapping of fiber dimension $e$ over its image $\Lambda_{\phi}$. The Leray measure $d_{C_{\phi}}$ on $C_{\phi}$  is the pullback $\delta_0(\nabla_{\theta} \phi)$ of $\delta_0$ under the map $\nabla_{\theta}  \phi :X \times \R^N \to \R^N$. It is well-defined if $\nabla_{\theta} \phi $ is a submersion.

For  the  phase  \eqref{PSI} with $q$ fixed, 
%\begin{equation} \label{PSI} \Psi(\wt \omega; y, \omega) = \langle y, \omega - \wt \omega): S^*_H M \to \R. \end{equation}
the role of  $X$ is played by $\wt \omega \in  S^{d-1}$  and the role of the phase variable $\theta \in \R^N$ is played by $\theta = (y_1 \dots, y_{d},  \omega \in S^{n-1}),$ with $N = d + n-1$.  Then,
$$\nabla_{y, \omega} \Psi = (\omega - \wt \omega, y - \langle y, \omega \rangle \omega) \in S^{n-1} \times H \simeq S^{n-1} \times \R^d $$
and the critical set of the phase \eqref{PSI} is given by,
\begin{equation} \label{CPSI} C_{\Psi} = \{(\wt \omega; y, \omega): \omega = \wt \omega, \;\; y = \langle y, \wt \omega \rangle \wt \omega \} \subset S^*_q M \times T_q H
\times S^*_q M. \end{equation}

As the next Lemma shows, the phase \eqref{PSI} fails to be clean (much less, non-degenerate).

%The following is a more precise statement of Lemma \ref{UNCLEANLEM} 
\begin{lemma} \label{UNCLEAN2} \begin{itemize} We have, \bigskip

\item (i)\; The phase \eqref{PSI} is a clean phase of excess $e=1$ on the complement of the diagonal $q = q'$, i.e.  for $y \not= 0$.  That is, the kernel of $d \Psi'_{y, \omega} $ on $ T_{\wt \omega, y, \omega} (S^{d-1} \times \R^d \times S^{n-1})$ equals $ T_{\wt \omega, y, \omega}  
C_{\Psi} $ when $y \not= 0$. If $y = rv$, all components of $d_{\wt \omega, v, \omega} \Psi$ are independent on $C_{\Psi}$.   \bigskip

\item (ii) \; The phase \eqref{PSI} is not  clean along the set where $\{y=0\}$ (the diagonal).  The kernel of $d \Psi'_{y, \omega} $ on $ T_{\wt \omega, y, \omega} (S^{d-1} \times \R^d \times S^{n-1})$ jumps at $y =0$ to include
$SN^* \Delta_{H \times H}$,  i.e. 
$\omega \in S^*_q M: \pi_H \omega = 0. $ \bigskip

\item (iii) \;  The map $\iota_{\Psi}: C_{\Psi}  \backslash \{y =0\} \to \Lambda_{\Psi} $ is an $\R^*$ bundle over the zero section $\Lambda_{\Psi} = 0_{T^* S^{d-1}} \subset T^* S^{d-1} $. It has a blow-down singularity over $\{y = 0\}$ (see Section \ref{BDSECT} \bigskip

%\item (iv) \; for $\hat{\psi}$ vanishing near $r = |y| \not=0$, the principal symbol of \eqref{udef} is the half-density on $\Lambda_{\Psi}$ given by $%%\left(\int_0^{\infty} \hat{\psi}(r) \hat{\rho}(r) r^{- \frac{n-d}{2}} dr \right) |dV(\wt \omega)|^{\half}. $ In local coordinates,  $d_C = |dr| (D(r, \phi'_{\theta})/D(\wt %\omega, y, \omega)|)^{-1}.$.

\end{itemize}
 \end{lemma}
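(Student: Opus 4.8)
\textbf{Proof proposal for Lemma \ref{UNCLEAN2}.}

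The plan is to compute $d_{y,\omega}\Psi$ explicitly, identify its rank on $C_\Psi$, and compare the kernel with the tangent space of $C_\Psi$, treating the three assertions in turn. First I would record from the formula $\nabla_{y,\omega}\Psi = (\omega - \wt\omega,\; y - \langle y,\omega\rangle \omega)$ that the critical set is exactly \eqref{CPSI}. For part (i), on the locus $y \ne 0$ write $y = r v$ with $r>0$, $v \in S^{d-1}$, and parametrize a neighborhood of a point of $C_\Psi$ by the coordinates $(\wt\omega; r, v, \omega)$, regarding $\wt\omega$ as the base variable. One checks that $C_\Psi$ has dimension $(d-1) + 1$ (i.e. $\wt\omega$ and $r$ are free, while $v$ and $\omega$ are determined by $v = \wt\omega$, $\omega = \wt\omega$ on the critical set), so the codimension of $C_\Psi$ inside $S^{d-1}\times \R^d \times S^{n-1}$ is $N - e$ with $N = d + n - 1$ and $e = 1$. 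Then I would verify that the differentials $d(\partial_{y_i}\Psi)$, $d(\partial_{\omega_a}\Psi)$ span a space of dimension exactly $N - 1 = d + n - 2$ at each point of $C_\Psi$ with $r > 0$: the $\omega$-component of $\nabla\Psi$, namely $\omega - \wt\omega$, contributes $n-1$ independent differentials (differentiating in $\omega$ along $S^{n-1}$ gives the identity), while the $y$-component $y - \langle y,\omega\rangle\omega = r(v - \langle v,\omega\rangle\omega)$ contributes $d - 1$ independent differentials after the factor $r$ is divided out — it is precisely the $r \ne 0$ that makes these $d-1$ directions survive. Counting, $e = N - (N-1) = 1$, and a direct check of kernels shows $\ker d\Psi'_{y,\omega} = T C_\Psi$ at such points, which is the cleanliness condition; dividing the $y$-differentials by $r$ (legitimate since $r>0$) shows the $(v,\omega)$-differentials are all independent, giving the last sentence of (i).

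For part (ii), I would examine what happens as $r \to 0$. The $d-1$ differentials coming from the $y$-block carry the overall factor $r$, so at $y = 0$ they collapse: $d(\partial_{y_i}\Psi)|_{y=0}$ only sees the $\omega$-free part, and the rank of $d\Psi'_{y,\omega}$ drops by $d-1$. Consequently the kernel of $d\Psi'_{y,\omega}$ at $y = 0$ jumps up by $d-1$ dimensions. I would then identify the extra kernel directions: perturbing $\omega$ within $S^{n-1}$ in directions conormal to $H$ (i.e. those $\dot\omega$ with $\pi_H\dot\omega$ unconstrained, keeping $\omega = \wt\omega$ is impossible since $\wt\omega \in S^*_qH$, so really one perturbs $\omega$ off $S^*_qH$ into $S^*_qM$) no longer moves $\nabla_\theta\Psi$ to first order when $y = 0$, because the only term feeling $\omega$ is $y - \langle y,\omega\rangle\omega$, which vanishes with all its $\omega$-derivatives at $y=0$. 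This is exactly the set $\{\omega \in S^*_qM : \pi_H\omega = 0\}$, the unit conormal sphere $SN^*\Delta_{H\times H}$; the phase is therefore not clean along $\{y=0\}$ because the kernel is strictly larger than $TC_\Psi$ there (which still has constant dimension $d$).

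For part (iii), away from $\{y=0\}$ cleanliness gives a well-defined fiber map $\iota_\Psi : C_\Psi \to \Lambda_\Psi \subset T^*S^{d-1}$, $\iota_\Psi(\wt\omega; y,\omega) = (\wt\omega, \nabla_{\wt\omega}\Psi)$. Since $\nabla_{\wt\omega}\Psi = -y = -r v = -r\wt\omega$ on $C_\Psi$ (using $\omega = \wt\omega = v$), but the covector must be evaluated along the sphere $S^{d-1}$ where $\wt\omega$ is a unit vector, the component of $-r\wt\omega$ tangent to $S^{d-1}$ is zero — so $\nabla_{\wt\omega}\Psi|_{T S^{d-1}} = 0$ and $\Lambda_\Psi$ is the zero section. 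The fibers of $\iota_\Psi$ are then parametrized by $r \in (0,\infty)$ together with the sign ambiguity, i.e. by $\R^* = \R\setminus\{0\}$ (identifying $r>0$ with direction $v$ and $r<0$ with $-v$ after the double cover); this exhibits $\iota_\Psi$ as an $\R^*$-bundle over the zero section. As $r \to 0$ the entire line of fibers is crushed onto the zero section, which is precisely the defining local model of a blow-down map in the sense of \cite{G89}; I would simply record this and refer forward to Section \ref{BDSECT} for the normal form, rather than prove the blow-down structure here.

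The main obstacle I anticipate is bookkeeping of the rank count near $y = 0$ carefully enough to see that it is exactly $d-1$ dimensions that are lost and that the recovered kernel directions are exactly the conormal ones — the factor-of-$r$ argument is the crux, and one has to be attentive that the sphere constraints on $\omega$ and $\wt\omega$ are being differentiated correctly (e.g. using that $\langle \wt\omega, d\wt\omega\rangle = 0$). Everything else is a routine, if slightly tedious, verification of the Duistermaat--Guillemin cleanliness criterion against the explicitly computed $C_\Psi$.
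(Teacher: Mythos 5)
Your overall route is the paper's: compute the differentials of the phase \eqref{PSI} on its critical set, detect the rank drop at $y=0$, and compute the Lagrange map directly (your part (iii), with the zero section, the $\R^*$ fibers and the deferral of the blow-down structure to the appendix, matches the paper's argument essentially verbatim). But there is a concrete error in the rank bookkeeping in (i)--(ii), and it is not cosmetic. The component $\omega-\wt\omega$ of $\nabla_{y,\omega}\Psi$ is $\nabla_y\Psi=\pi_H\omega-\wt\omega$: since $y$ ranges over the $d$-dimensional space $T_qH$, this block consists of only $d$ scalar equations and its differentials span a space of dimension $d-1$ (the derivative in $\omega$ is $\pi_H$ restricted to $T_\omega S^{n-1}$, not the identity), so it cannot contribute $n-1$ independent differentials. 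It is the other block, $\nabla_\omega\Psi=y-\langle y,\omega\rangle\omega$, with $n-1$ scalar components, whose $\omega$-derivative on the critical set is $-|y|\,I_{n-1}$ (the scaled second fundamental form of $S^{n-1}$), that contributes $n-1$ independent differentials when $r\neq0$ and collapses at $r=0$. You have the two blocks swapped, and as a result your claimed rank drop and kernel jump at $y=0$ is $d-1$, whereas the correct value is $(n+d-2)-(2d-2)=n-d$. Your own identification of the extra kernel as the conormal directions $\{\delta\omega:\pi_H\delta\omega=0\}$ (dimension $n-d$) is correct and already contradicts your count of $d-1$, so the argument as written is internally inconsistent; also the assertion that "the only term feeling $\omega$" is $y-\langle y,\omega\rangle\omega$ is false as stated, since $\pi_H\omega-\wt\omega$ depends on $\omega$ as well -- what is true is that it is insensitive to conormal perturbations of $\omega$.

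This matters beyond the lemma: the size $n-d$ of the collapse is precisely what produces the Hessian factor $|y|^{-(n-d)}$ in the stationary phase analysis of Section \ref{SPSECT}, the regularization $(s+i0)^{-\frac{n-d}{2}}$ in the leading coefficient \eqref{acHpsi1}, and the order $\lambda^{\frac{n+d}{2}}$ of the Kuznecov--Weyl sums; with a drop of $d-1$ all of these exponents would come out wrong. The paper pins the count down exactly where your argument goes soft, namely in Lemma \ref{LAGLEM}: writing the $(y,\omega)$-Hessian in block form $\begin{pmatrix}0&B\\ B^T&D\end{pmatrix}$ with $B$ of constant rank $d-1$ (coming from $\pi_H$) and $D=-|y|I_{n-1}$, so the rank is $n+d-2$ for $y\neq0$ and $2d-2$ at $y=0$. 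If you redo your counting with the roles of the two blocks corrected, the rest of your argument (cleanliness with excess $e=1$ off the diagonal, failure of cleanliness at $y=0$ with the conormal kernel, and the $\R^*$-bundle/blow-down description of $\iota_\Psi$) goes through and coincides with the paper's proof.
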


\begin{proof} 

\noindent{\bf Proof of (i)}
\bigskip 

 For   fixed $\wt \omega$, the equation  $y = \langle y, \wt \omega \rangle
\wt \omega$ determines $v = \frac{y}{r} = \wt \omega$ and therefore the slice $C_{\Psi}(\wt \omega)$ with fixed
$\wt \omega$ can be identified with $r \in (0, \infty)$ when $r > 0$. $C_{\Psi}(\wt \omega)$ also contains the point $y = 0$ 
for every $\wt \omega$. It follows that $C_{\Psi} \simeq S^*_q H \times \R_+$ is a manifold with boundary
$S^*_q \times \{0\}$.  Henceforth, we put
$$\Psi^{\wt \omega}(y, \omega) = \Psi(\wt \omega ,y, \omega). $$
%$$C_{\Psi}(\omega) \cap \{|y| > 0\}  \simeq  T^*H \backslash \{0\} = \{(r, v) \in  \R_+ \times S^*_q H\},\;\; y = rv, $$
%since $|y| = r$  fails to be  determined by the homogeneous  critical point equation \eqref{CPSI}  but $v = \frac{y}{r}$ is %determined when $r \not=0$. Roughly speaking, $C_{\Psi} \simeq T^*H$ except that
%the linear equation $y = \langle y, \wt \omega \rangle \wt \omega$ is invariant under $y \to -y$ and under $\wt \omega %\to - \wt \omega$;
%thus $\langle y, \wt \omega \rangle$ is signed length of $y = \exp_q^{-1} (q') $ with respect to $\wt \omega$.

To calculate the  covectors
 $d (\frac{\partial \Psi}{\partial \theta_j}) = d \nabla_{y, \omega} \Psi$, we use
 that the phase \eqref{PSI} is symmetric under the diagonal action of $SO(d)$ on $S^{d-1} \times \R^d \times S^{n-1}$, where it acts on the third
factor by $g \cdot \omega = (g \pi_H \omega, \omega^{\perp})$. We fix $q \in H$ and define linear coordinates  $(y_1, \dots, y_d)$ on $T_q H \simeq \R^d$.  Without loss of generality we may assume that $\wt \omega = e_d$. 
 We also 
fix linear 
coordinates $(x_1, 
\dots, x_n)$  on $T_q^* M \simeq \R^n$ and endow $S^*_q M \simeq  S^{n-1}$ with the coordinates $x' := (x_1, \dots, x_{d-1}, x_{d+1},
\dots, x_n) $, with  $x_d = \pm \sqrt{1 - |x'|^2}.$
Then $\omega = (x_1, \dots, x_{d-1},  \pm \sqrt{1 - |x'|^2}, x_{d+1}, \dots, x_n)$. In these coordinates, $e_d = (\vec 0, 1, \vec 0) $ corresponds to $x' = 0$ in $B_1(\R^{n-1})$. Then, $$\omega - \wt \omega = 
 (x_1, \dots, x_{d-1},  \pm \sqrt{1 - |x'|^2} - 1, x_{d+1}, \dots, x_n),\;\;\; (\wt \omega = e_d)$$  so that for $|x'| < 1$ and 
 on the $\pm$ hemisphere $S^{d-1}_{\pm}$ where $ x_d = \pm \sqrt{1 - |x'|^2}$, 
 \eqref{PSI}  reduces to \begin{equation} \label{Psied}  \Psi_{\pm}^{e_d}: =  \sum_{j=1}^{d-1} y_j x_j \pm y_d ( \sqrt{1 - |x'|^2} -1)
=  \sum_{j=1}^{d-1} y_j x_j + y_d (x_d -1). \end{equation}
We note that the choice of $\wt \omega = e_d$ creates the sign asymmetry between $\pm \sqrt{1- |x'|^2} -1$. 

Next, we  calculate the  covectors  $d (\frac{\partial \Psi}{\partial \theta_j}) = d \nabla_{y, \omega} \Psi$ in these coordinates.  Let $\partial_{x_j} =\frac{\partial}{\partial x_j}$.  Since
$\partial_{x_j} \sqrt{1 - |x'|^2} = \frac{x_j}{\sqrt{1 - |x'|^2}}, \;\; j \not= d, $ the critical point equations of the phase \eqref{Psied} on the complement of $x_d = 0$ are, 
\begin{equation} \label{CPE} \left\{ \begin{array}{ll} (i) &  \partial_{y_j}  \Psi^{e_d}= x_j =  0 \; (j = 1, \dots, d-1);\\ &  \\ (ii) &  \partial_{y_{d}}  \Psi^{e_d}=x_d - 1  = 0 \iff x_d =1 \iff   \pm \sqrt{1 - |x'|^2} = 1 \iff +, x' =0,\\ &\\
(iii) & 
 \partial_{x_j} \Psi^{e_d} = y_j + y_d \frac{x_j}{\sqrt{1 - |x'|^2}} =  0 \iff y_j = 0,  j = 1, \dots, d-1;  \\ &  \\ (iv) & 
 \partial_{x_j} \Psi^{e_d} =  y_d \frac{x_j}{\sqrt{1 - |x'|^2}} = 0 \iff y_d =0\; \rm{or}\; x_j =0, \; j = d +1, \dots,n ; \end{array} \right.
 \end{equation}
There is a sign  asymmetry in $\pm$ due to the choice of $\wt \omega = e_d$   in (ii), since there are no solutions for the $-$ sign. 
 In (iii) we use (ii) to simplify the equation.
By (i),   $x_j = 0, $ for $j = 0, \dots, d-1$, hence on the critical set, $x' = (0, 0, \dots, 0, x_d, x_{d+1}, \dots, x_n) $. The critical point equation in 
$y_d$ implies $x_d = \pm 1$ (hence $=1$ as in (ii)  since we have chosen $e_d$ as the base point). 
It follows that $x_{d+1}, \dots, x_{n} = 0$ at a critical point. 
Combining the first critical point equation  with the second critical point equation, one finds that $y_j = 0$ for $j =1, \dots, d-1$. When $y_d = 0$, equation (iv) does not force $x_{d+1}, \dots, x_n = 0$ but it is forced by (ii).

To check the condition that $T C_{\Psi}$ is the nullspace of the differentials, we form the Hessian matrix of$\Psi^{\omega}$,
$$(D \nabla \Psi^{\wt \omega} )|_{C_{\Psi^{\wt \omega}}} = \begin{pmatrix} & y & \omega \\ & &  \\
y & D^2_y \Psi^{\wt \omega} & D^2_{y, \omega} \Psi^{\wt \omega} \\ & & \\
\omega  & D^2_{\omega y} \Psi^{\wt \omega} & D^2_{\omega} \Psi^{\wt \omega} \end{pmatrix}_{ |_{C_{\Psi^{\wt \omega}}}} = \begin{pmatrix} A & B \\ & \\ B^T & D \end{pmatrix}
= \begin{pmatrix} 0 & B \\ & \\ B^T & D \end{pmatrix}.$$
Here, $A$ is $d \times d, B$ is $(n-1) \times d$ and $D$ is $(n-1) \times  (n-1)$. Intrinsically, the Hessian acts on 
$T_q H \oplus T (S^*_q M)$. 

For future reference, we recall \cite{L88} that the signature of an invertible symmetric matrix $M$  as above, with inverse 
$\begin{pmatrix} A' & B' \\ & \\ (B')^T  & D' \end{pmatrix}$ is given by
\begin{equation} \label{SIG2} \rm{sgn} \; M = \rm{sgn} A + \rm{sgn} \; D'. \end{equation}

\noindent{\bf Proof of (ii)} The following Lemma implies (i)-(ii).

\begin{lemma}\label{LAGLEM}  Let $r = \rm{Rank} (D \nabla \Psi^{\wt \omega} )|_{C_{\Psi^{\wt \omega}}}$ and let 
$V = \rm{Ker}\; (D \nabla \Psi^{\wt \omega} )|_{C_{\Psi^{\wt \omega}}}$. 
\begin{enumerate}

\item When $y \not= 0$,  $r = n + d-2$ and $V = \rm{span} \frac{\partial}{\partial r}$, the radial vector in $T_q H$.  \bigskip

\item When $y=0$,  $r = 2 d-2$ and $V=  \rm{span} \frac{\partial}{\partial r} \oplus T S^*_q M$.

\end{enumerate}

\end{lemma}

\begin{proof}
Since $A =0$, the rank of the Hessian is $\; \rm{rank} B + \rm{rank} \begin{pmatrix} B \\ D \end{pmatrix}$.  $D_y \Psi^{\omega} = \pi_H \omega = \wt \omega$ on the critical set and its derivative there   always has rank $d-1$.  Since $\Psi''_{\omega \omega} = II_{S^{n-1}} (y) = - |y| I_{n-1} $ is the second fundamental form of $S^{n-1}$ at point
where $y$ is normal, it equals $-|y|$ times the identity matrix of rank $n-1$. 
When $y=0$ then $D=0$ and the rank equals $2\; \rm{rank} \; B = 2 d-2$. When $y \not= 0$ the rank equals $\rm{rank} \ \; B + \rm{rank} \;D = n + d-2$. \end{proof}
Thus, there is a drop in rank
by $n-d$ when $y =0$ and the phase fails to be clean when $y=0$. But it is of constant rank and is clean for $y \not= 0$.

\noindent{\bf Proof of (iii)} 
\bigskip

The associated
Lagrange map is 
$$\iota_{\Psi}: C_{\Psi} \to T^* S^{d-1}, \;\; \iota_{\Psi}(\wt \omega, y, \omega) = (\wt \omega, d_{\wt \omega} \Psi(\wt \omega, y, \omega)) = (\wt \omega, y - \langle y, \wt \omega \rangle \wt \omega) = (\wt \omega, 0)
\in 0_{T^*S^{d-1}}. $$
The fiber of this map (with $q$ fixed and suppressed) is given by
\begin{equation} \label{FIBER} \iota_{\Psi}^{-1}(\wt \omega, 0) = \{(\wt \omega, y, \omega): \omega = \wt \omega, y = \langle y, \wt \omega \rangle \wt \omega\}
\simeq \R^*,
\end{equation}
since the equation  $y = \langle y, \wt \omega \rangle \wt \omega$ is homogeneous under multiplication of $y$ by $x \in \R$. If we 
denote $v = \frac{y}{|y|}$ then $v = \pm \wt \omega$.

\end{proof}

It will be explained in Section \ref{BDSECT} that $\iota_{\Psi}$ has a blow down singularity over $\{y = 0\}$.

\subsection{\label{STPHASE}Stationary phase}

For the sake of completeness, we recall the stationary phase method (cf. \cite[Volume I]{HoIV}).

\begin{theo} Let $K \subset \R^n$ be compact, let $U$ be an open
neighborhood of $K$, and let $k \in \N$. Let $a \in
C_0^{\infty})K), S \in C^{\infty}(U)$ with $\Im S = 0$.  Assume
$S'(x_0) = 0, \det S''(x_0) \not= 0,
S' \not= 0 $ in $K \backslash
\{x_0\}.$ Then:
$$\begin{array}{l} \int_{\R^n} a(x) e^{i \lambda S(x)} dx = \\ \\
= e^{i \lambda S(x_0)}{\sqrt{\det (\lambda S''(x_0))/2\pi i)}}
\sum_{j < k} \lambda^{-j} L_j a(x_0) \\ \\
+ O(\lambda^{-k} \sum_{|\alpha| \leq 2k} \sup |D^{\alpha} u(x)|).
\end{array}$$
Here, if $g_{x_0}(x) = S(x) - S(x_0) - \langle S''(x_0) (x - x_0),
(x - x_0) \rangle/2$ then
$$L_j a = \sum_{\nu - \mu = j} \sum_{2 \nu \geq 2 \mu} \frac{i^{-j}
2^{-\nu}}{\mu! \nu!}  \langle S''(x_0)^{-1} D, D \rangle^{\nu}
(g_{x_0}^{\mu} a). $$
\end{theo}

\subsection{\label{SPSECT} Stationary phase analysis away when $\hat{\psi}(s)$ vanishes near $s = 0$}

\begin{proof} By Lemma \ref{UNCLEAN2}, stationary phase applies to the subintegral of 
 \eqref{FV} defined by, $$I_2(q, \wt \omega, \lambda) \simeq  \lambda^{n + d -2} \int_{T^*_q H}  
\int_{S^*_q M}  \hat{\psi}( \langle y, \wt \omega \rangle) \hat{\rho}( \langle y, \omega  \rangle)   e^{i \lambda \Psi } \\ \\ \wt Ady   d S(\omega), $$ in the `phase variables' $(\omega,y)$, with $\Psi$ defined in \eqref{PSI},  and with amplitude
$$\wt A = \wt A (r \langle v,\omega \rangle, r \langle v, \wt \omega \rangle,  q, rv ,  \omega,  \wt \omega), $$
where $y= r v$, 
and where  $(q, \wt \omega)$ are regarded as parameters.  By \eqref{CPSI}, the  phase   is stationary if and only if $\omega = \wt \omega =  \pm v = \frac{y}{|y|}$ with $|\det \rm{Hess } \Psi|^{-\half}
= r^{-(n-d)}$. The critical set for fixed $(q, \wt \omega)$ may be identified with $\R_x$; the phase equals zero on the critical set.
 % For $\hat{\psi}$ vanishing near $r = |y| \not=0$, the principal symbol of \eqref{udef} is the half-density on $%%T\Lambda_{\Psi}$ given by $\left(\int_0^{\infty} \hat{\psi}(r) \hat{\rho}(r) r^{- \frac{n-d}{2}} dr \right) |dV(\wt \omega)|^{\half}. %$ In local coordinates,  $d_C = |dr| (D(r, \phi'_{\theta})/D(\wt \omega, y, \omega)|)^{-1}.$.
        When $y \not=0$ we can use the Schur determinant formula to obtain,
        $$\det \rm{Hess}  =  \det D \det (B^T D^{-1} B)   =  |y|^{n-1} |y|^{-(d-1)} = |y|^{- (n-d)} = r^{- (n-d)}. $$
        The signature of the Hessian at the critical point is given by, 
        $$\rm{sgn} \; \rm{Hess} = 0. $$
        Indeed, it is recalled in \eqref{SIG2} that the signature of the Hessian is the signature of $D'$ where the inverse of the Hessian
        has lower right block $D'$; here we use that the upper left block of the Hessian equals zero. By the Schur complement inverse formula, when $A =0$ the Schur complement is $ M/D = - B D^{-1} B^T$ and 
        $$M^{-1} = \begin{pmatrix} (M/D)^{-1} & - (M/D)^{-1} B D^{-1} \\ & \\
        - D^{-1} B^T (M/D)^{-1} & D^{-1} + D^{-1} B^T (M/D)^{-1} B D^{-1} \end{pmatrix}, $$
        so (again assuming $A =0$)  $$\begin{array}{lll} \rm{sgn} M =  \rm{sgn} \left(D^{-1} + D^{-1} B^T (M/D)^{-1} B D^{-1}\right)  & = & \rm{sgn} 
        \left(D^{-1} + D^{-1} B^T(- B D^{-1} B^T)^{-1} B D^{-1}\right) \\&&\\
        & = &  - \rm{sgn} (I_{n-1} - B^T (B B^T)^{-1} B), \end{array} $$
          since by Lemma \ref{LAGLEM}, $D = - |y| I_{n-1}$, while $B = D_{\omega} \pi_H \omega$. Now $B B^T = I_{n-1}$, concluding the proof.

        Since $$\langle y, \wt \omega \rangle =r \langle v, \omega \rangle = \pm r= x$$ on the critical set, 
  modulo dimensional constants $C_{n,d} $,
        $$\begin{array}{lll} I_2(q, \wt \omega, \lambda)  & \simeq  &    \lambda^{n + d-2}  
 \int_{\omega \in S^{n-1} }  \int_{\R^d }  \hat{\psi}(\langle  y, \wt \omega \rangle) \hat{\rho}(\langle y, \omega \rangle)  \chi_2(y) e^{i \lambda \langle y, \omega - \wt \omega \rangle}  \wt A dy  dS(\omega) \\&& \\ 
& \simeq &  \lambda^{n + d-2}  \lambda^{- \frac{n + d -2}{2}} \;
 \int_{-\infty}^{\infty}  \hat{\rho}(x)  \hat{\psi}(x) \acal_0 (q, \wt \omega, x)  |x|^{- \frac{n-d}{2} + d-1} dx, 
  \\&&\\&& \rm{where}\;\; \acal_0(q, \wt \omega, x) :=  \wt A_0 (x ,x,  q, x \wt \omega, \wt \omega,  \wt \omega).
  % =  \lambda^{\frac{n + d}{2} -1}  %\int_0^{\infty}   \hat{\psi}(r)  r^{- \frac{n-d}{2} } dr.
  \end{array} $$

Assuming that
  $\hat{\rho } = 1$ on $\rm{Supp} \hat{\psi}$ and $\rm{Supp} \hat{\psi} \subset (0, \infty)$, the integral has the same singularity (to leading order) as, 
  $$ C_{n,d} \lambda^{\frac{n + d}{2} -1}  \int_0^{\infty}   \hat{\psi}(s)  s^{- \frac{n-d}{2} } ds. $$

\end{proof}

  This is a preliminary result since we have assumed
the $\hat{\psi}$ vanishes near $0$. The stationary phase expansion is not valid all the way down to $s=0$, as one can easily verify in model
cases such as for Bessel functions (Section \ref{BESSELSECT}). In the next sections we will prove the complete formula for the leading coefficient.  We also calculate the amplitude explicitly by using
a Hadamard parametrix in Section \ref{HADPAR}.
In 
the next section, we use a model integral with a sufficiently accurate Taylor approximation to the phase,  which clarifies the distributional 
coefficient \eqref{acHpsi1}   when $[0, 1] \subset \rm{Supp} \hat{\psi}$.

\section{ \label{MODELSECT} Proof of Theorem \ref{main 1} and \ref{main 2} using a model phase}
In this section, we complete the proof of Theorem \ref{main 1} and Theorem \ref{main 2} by approximating
the phase $\langle y, \omega - \wt \omega \rangle$ by its Taylor expansion up to order $4$. This allows one
to distinguish one variable that causes degeneracy of the stationary phase method. It is possible to apply
the stationary phase method in the remaining variables and then to integrate the result in the distinguished variable. 
\subsection{ Model phase}

In determining properties of the Hessian, and in calculating asymptotics of integrals,  it is convenient to 
  Taylor expand \eqref{Psied} around its critical point $x' = 0$. We define  the `model' phase,
 \begin{equation} \label{PSIMODEL}\begin{array}{lll}  \Psi_{model} (\vec y, \vec x) & =&  \sum_{j=1}^{d-1} y_j x_j - \half  y_d  |x'|^2:  \R^d \times B_1(\R^{n-1}) \to \R \\ &&\\ & = &  \sum_{j=1}^{d-1} y_j x_j - \half  y_d (x_1^2 + \cdots + x_{d-1}^2 +
x_{d+1}^2 + \dots + x_n^2) ). \end{array}  \end{equation} 
%  \edit{$$\Psi_{\pm}^{e_d}: =  \sum_{j=1}^{d-1} y_j x_j \pm y_d ( \sqrt{1 - |x'|^2} -1)
%=  \sum_{j=1}^{d-1} y_j x_j \mp \half y_d  (x_1^2 + \cdots + x_{d-1}^2 +
%x_{d+1}^2 + \dots + x_n^2) ) .$$}
   We  view the variables $(y_d, x_d, \dots, x_n)$ as parameters and consider the phase \eqref{Psied} as a function
$\Psi_{model, d-1} $  of $(y_1, \dots, y_{d-1}, x_1, \dots, x_{d-1})$.  The critical point analysis in Lemma \ref{UNCLEAN2} 
applies to the model phase as much as to \eqref{Psied} since they agree modulo terms of order $4$ by the next Lemma.
Since $ \sqrt{1 - |x'|^2} = 1 - \half   |x'|^2 + O( |x'|^4),$ we have
\begin{lemma} \label{MODELPHASELEM} In the above coordinates, when $y \not=0$ the  universal phase 
\eqref{Psied}  has a unique critical point  $\vec x = e_d$ and $y = y_d e_d$, and satisfies,
$$\langle y, \omega - \wt \omega \rangle  = \Psi_{model}(\vec y, \vec x) + O(y_d  |x'|^4). \;\; $$

\end{lemma}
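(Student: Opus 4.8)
The plan is to read off both assertions from the critical point computation \eqref{CPE} already carried out in the proof of Lemma \ref{UNCLEAN2}, together with the binomial expansion of $\sqrt{1-|x'|^2}$ recorded just before the statement. First I would extract the critical set. Working in the chart near $e_d = \wt \omega$ on the hemisphere $x_d = +\sqrt{1-|x'|^2}$, equation \eqref{CPE}(i) gives $x_1 = \cdots = x_{d-1} = 0$, equation \eqref{CPE}(ii) forces $x_d = 1$ and hence $x_{d+1} = \cdots = x_n = 0$ because $|\omega| = 1$, so $\omega = \vec x = e_d$; then \eqref{CPE}(iii) forces $y_1 = \cdots = y_{d-1} = 0$, leaving $y = y_d e_d$ with $y_d$ a free parameter. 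On the opposite hemisphere $x_d = -\sqrt{1-|x'|^2}$, equation \eqref{CPE}(ii) reads $-\sqrt{1-|x'|^2} - 1 = 0$, which has no solution, so there are no further critical points. This is precisely the $\R^*$-fiber appearing in Lemma \ref{UNCLEAN2}(iii); in particular $y = y_d e_d \ne 0$ along it, consistent with the hypothesis $y \ne 0$, and ``unique'' is to be read as uniqueness modulo the scaling $y_d$.

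Next I would prove the Taylor comparison. In the same chart, \eqref{Psied} reads
$$\langle y, \omega - \wt \omega \rangle = \sum_{j=1}^{d-1} y_j x_j + y_d\bigl(\sqrt{1-|x'|^2} - 1\bigr).$$
The binomial series $(1-u)^{1/2} = 1 - \tfrac12 u - \tfrac18 u^2 - \cdots$ converges for $|u| < 1$; substituting $u = |x'|^2$ yields
$$\sqrt{1-|x'|^2} = 1 - \tfrac12 |x'|^2 + R(|x'|^2), \qquad R(u) = O(u^2) \ \text{ smooth on } [0, 1-\delta],$$
with no odd-order term in $x'$ since the correction depends on $x'$ only through $|x'|^2$. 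Multiplying by $y_d$ and recalling the definition \eqref{PSIMODEL} of $\Psi_{model}$,
$$\langle y, \omega - \wt \omega \rangle = \Psi_{model}(\vec y, \vec x) + y_d\, R(|x'|^2) = \Psi_{model}(\vec y, \vec x) + O(y_d |x'|^4),$$
the remainder being uniform for $|x'| \le 1 - \delta$. That range suffices, because the small-time parametrix, and hence the amplitude $\wt A$, is supported near $\Delta_{H \times H}$, where $\omega$ stays in a fixed cone about $\wt \omega$ and $|x'|$ is bounded away from $1$.

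I do not expect a real obstacle here; this is the elementary observation underlying Section \ref{MODELSECT}, and the only care needed is in the bookkeeping. First, the error must be shown to carry the factor $y_d$ rather than merely being $O(|x'|^4)$: this is what forces $\Psi_{model}$ and the true phase to share the same degeneracy — the blow-down along the $y_d$-axis identified in Lemma \ref{UNCLEAN2} and Lemma \ref{LAGLEM} — and it is what makes the remainder genuinely subleading after the later rescaling, in which $y_d$ stays of size $O(1)$ while $|x'| \to 0$. Second, one should note the absence of a cubic term, so that \eqref{Psied} agrees with $\Psi_{model}$ modulo terms of order $4$, which is the form of the statement used in the text and which means the quartic model already reproduces the correct Hessian at the critical point. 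Finally, one must keep track of which quantities are held fixed — the normalization $\wt \omega = e_d$, equivalently the frozen components $x_d, \dots, x_n$ and $y_d$ entering \eqref{PSIMODEL} — so that the $O$-bound is uniform in them on the relevant compact sets.
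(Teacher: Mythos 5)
Your proposal is correct and follows the same route as the paper: the critical set is read off from the equations \eqref{CPE} established in the proof of Lemma \ref{UNCLEAN2} (with the $-$ hemisphere excluded and $y_d$ left free along the $\R^*$ fiber), and the phase comparison is exactly the Taylor expansion $\sqrt{1-|x'|^2} = 1 - \tfrac12|x'|^2 + O(|x'|^4)$ multiplied by $y_d$, which is the one-line justification the paper gives just before the statement. Your extra bookkeeping (the factor $y_d$ in the remainder, the absence of a cubic term, uniformity on $|x'|\le 1-\delta$) is consistent with, and slightly more explicit than, the paper's treatment.
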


  %around $\vec x = 0$. 
%Here we use that $(1 - t)^{\half} =1 - \half t + O(t^2)$.
%Hence,

%$$\Psi_{e_d}(\vec y, \vec x) = 
%\sum_{j=1}^{d-1} y_j x_j - \half y_d (    (x_1^2 + \cdots + x_{d-1}^2 +
%x_{d+1}^2 + \dots + x_n^2) ) 
%\Psi_{model} + R_4, \;\; \rm{where}\; R_4(\vec y, x') =  O(y_d |x'|^4)), $$
%where

 %\begin{equation} \label{MODELPHASE} \Psi_{model} (\vec y, \vec x) = : B_1(\R^{n-1}) \to \R. \end{equation}

The Hessian of \eqref{Psied} equals that of \eqref{PSIMODEL} at the critical points.

\begin{lemma} \label{NONDEGd-1} For any fixed $(y_d, x_d, x_{d+1}, \dots, x_n)$, the Hessian of \eqref{Psied} (or equivalently,  $\Psi_{model, d-1} $)  at $y' =0= x'$ is non-degenerate in the variables $(y', x') = (y_1, \dots, y_{d-1}, x_1, \dots, x_{d-1})$. Indeed, 
  $$(D^2\Psi_{model})|_{x' = 0, y= y_d e_d} = \begin{pmatrix} & \vec y & \vec x \\ && \\
 \vec y   &  0_{d-1, d-1} &   I_{d-1, d-1}  &    &\\ & &  \\ \vec x &  I_{d-1, d-1} &   - y_d I_{d-1, d-1}  \end{pmatrix}.$$
  Moreover, $\det (D^2\Psi_{model})|_{x' = 0, y= y_d e_d} =1$, and its inverse is given by,
$$(D^2\Psi_{model})^{-1}|_{x' = 0, y= y_d e_d}  = 
 \begin{pmatrix} y_d I_{d-1, d-1}   &  I_{d-1, d-1} \\ & \\  I_{d-1, d-1} &  0_{d-1, d-1}   \end{pmatrix}.$$
     \end{lemma}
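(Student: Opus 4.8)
The plan is a direct second-derivative computation, using Lemma~\ref{MODELPHASELEM} to pass freely between the universal phase \eqref{Psied} and its model \eqref{PSIMODEL} (they agree modulo a term $O(y_d|x'|^4)$ whose $x'$-Hessian vanishes at $x'=0$, so their Hessians coincide at the critical point). First I would differentiate $\Psi_{model}(\vec y,\vec x)=\sum_{j=1}^{d-1}y_jx_j-\half y_d|x'|^2$ in the genuine phase variables $y'=(y_1,\dots,y_{d-1})$ and $x'=(x_1,\dots,x_{d-1})$, keeping $(y_d,x_d,x_{d+1},\dots,x_n)$ frozen as parameters; only $y_d$ and the parameter-only squares $x_{d+1}^2+\cdots+x_n^2$ from $|x'|^2$ actually intervene. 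This gives, for $1\leq j\leq d-1$, $\partial_{y_j}\Psi_{model}=x_j$ and $\partial_{x_j}\Psi_{model}=y_j-y_dx_j$.

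Differentiating a second time and evaluating at the critical point $y'=0=x'$, $y=y_de_d$ yields
\[
\partial^2_{y_iy_j}\Psi_{model}=0,\qquad \partial^2_{y_ix_j}\Psi_{model}=\delta_{ij},\qquad \partial^2_{x_ix_j}\Psi_{model}=-y_d\,\delta_{ij}\qquad(1\leq i,j\leq d-1),
\]
and assembling these into the $(y',x')$-block Hessian reproduces exactly the matrix displayed in the statement (zero $y'y'$-block, identity $y'x'$-block, and $-y_dI_{d-1}$ in the $x'x'$-block).

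For the nondegeneracy and the inverse I would simply exhibit the candidate and check it by block multiplication:
\[
\begin{pmatrix}0&I_{d-1}\\ I_{d-1}&-y_dI_{d-1}\end{pmatrix}\begin{pmatrix}y_dI_{d-1}&I_{d-1}\\ I_{d-1}&0_{d-1}\end{pmatrix}=\begin{pmatrix}I_{d-1}&0\\ 0&I_{d-1}\end{pmatrix}.
\]
This establishes invertibility for every value of $y_d$, hence nondegeneracy of the model phase in the $2(d-1)$ variables $(y',x')$; it is consistent with Lemma~\ref{LAGLEM}(1) and Lemma~\ref{UNCLEAN2}(i), the only degenerate direction of the full phase away from $y=0$ being the radial direction $\partial_r$, which in these coordinates is $\partial_{y_d}$ and has been held fixed as a parameter. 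The value of $\det(D^2\Psi_{model})|_{x'=0,\,y=y_de_d}$ is then immediate from the block form — e.g.\ a Schur-complement evaluation with respect to the invertible lower-right block $-y_dI_{d-1}$ (valid for $y_d\neq 0$ and continuous in $y_d$), or, uniformly, a reordering of the variables into the pairs $(y_i,x_i)$ that block-diagonalizes the Hessian into $d-1$ copies of the $2\times2$ block $\begin{pmatrix}0&1\\ 1&-y_d\end{pmatrix}$.

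There is no real obstacle here: once the parameter/phase-variable split is fixed and the reduction to the model phase via Lemma~\ref{MODELPHASELEM} is in place, the statement is elementary linear algebra. The only point requiring a little care is the bookkeeping — correctly quarantining the $x_d,x_{d+1},\dots,x_n$ dependence and the constraint $x_d=\pm\sqrt{1-|x'|^2}$ into the parameters before differentiating, so that the $2(d-1)\times2(d-1)$ matrix above really is the full second-derivative matrix in the variables over which stationary phase is applied in Section~\ref{MODELSECT}.
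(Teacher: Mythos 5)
Your proof is correct and follows essentially the same route as the paper: compute the second derivatives of $\Psi_{model}$ in the variables $(y',x')$ at $x'=0$, $y=y_de_d$ to get the block form with $A=0$, $B=I_{d-1}$, $D=-y_dI_{d-1}$, and then read off invertibility and the determinant by elementary block linear algebra (the paper uses row operations and the Schur formula after a column interchange, while you exhibit the inverse directly, which is if anything cleaner). One tiny point, shared with the paper's own statement: the block (or $2\times2$-pairing) computation actually gives $\det = (-1)^{d-1}$, so the claimed value $1$ should be read as $|\det|=1$, which is all that enters the stationary phase argument.
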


  In particular, we note that the determinant and inverse  of this Hessian are uniformly bounded, i.e. do not blow up
  when $y_d \to 0$. 
However, when  $y_d = 0$ the critical point equations do not imply that
$x_{d+1} = \cdots = x_n =0$.  In invariant terms, $y_d = 0$ corresponds to $\Delta_{H \times H}$ and the coordinates $(x_{d+1}, \dots, x_n)$
run over the fiber of $N^* \Delta_{H \times H}$.

\begin{proof} The Hessian has the form   $$(D^2\Psi_{model})|_{x' = 0, y= y_d e_d}  = \begin{pmatrix} & y & x'\\ & &  \\
y & D^2_y \Psi_{model} & D^2_{y, x'} \Psi_{model}\\ & & \\
x' & D^2_{x' y} \Psi_{model} & D^2_{x'x'} \Psi_{model} \end{pmatrix}_{ |_{C_{\Psi^{\wt \omega}}}} = \begin{pmatrix} 0 & B \\ & \\ B^T & D \end{pmatrix}
%= \begin{pmatrix} 0 & B \\ & \\ B^* & D \end{pmatrix}
,$$
with $A = 0$ and with,
$$\begin{array}{ll} B_{kj} =  D^2_{y_k x_j}  (\sum_{j=1}^{d-1} y_j x_j - \half  y_d  |x'|^2) = \delta_{jk} = I_{d-1,d-1} & \;\; k =1, \dots, d-1, j = 1, \dots, d-1, 
%\\& \\
% B_{d j} =D^2_{y_d x_j}  (\sum_{j=1}^{d-1} y_j x_j - \half  y_d  |x'|^2) =  - x_j = 0 &  j = 1, \dots, n, j \not=d 
\\ & \\
 D_{jk} : = D^2_{x_j x_k}    (\sum_{j=1}^{d-1} y_j x_j - \half  y_d  |x'|^2) = - y_d \delta_{jk}, & j,k = 1, \dots, d-1,
\end{array}$$
proving that the Hessian has the stated form. Since we can multiply the top row block by $y_d$ and add it to the bottom row block without
changing the rank, the matrix has full rank.

We calculate the determinant  Schur determinant formula by interchanging the two columns and using the
Schur formula $\det M = \det D \det (A - B D^{-1} C)$.

\end{proof}

%Note that $D^2 \Psi_{model} $ is a $(2 d -1) \times (2d-1) $ matrix, while $(D \nabla \Psi^{e_d}|_{C_{\Psi^{e_d}}}) $ 
%is $(d + n-1) \times (d + n-1)$. They have the same $A$ block, but different $B,D$ blocks. However, the $B$ block has %the same rank $d-1$ in both cases; $D$ has rank $n-1$ for $(D \nabla \Psi^{e_d}|_{C_{\Psi^{e_d}}}) $, resp. rank
%$d-1$ for $D^2 \Psi_{model} $ when $y \not=0$.

 \subsection{\label{DEGPHINTRO} Asymptotics of the model integral}

In this section, we  drop the factors $\wt A e^{i \lambda R_4}$ from the amplitude.\eqref{N1q} and \eqref{udef}, and study
the model oscillatory integral,
\begin{equation} \label{MODELOSCINT} I_{model} (\lambda): = \int_{B_1(\R^{n-1}) } \int_{\R^d} \chi(y) \hat{\psi}(y_d) e^{i \lambda \left( \sum_{j=1}^{d-1} y_j x_j  - \half   y_d (x_1^2 + \cdots + x_{d-1}^2 +
x_{d+1}^2 + \dots + x_n^2) \right)}  d \vec y d \vec x. \end{equation}
In the next section, we restore the factors and explain their role in the final answer.

By Lemma \ref{NONDEGd-1}, we can remove the variables $(y_1, \dots, y_{d-1}, x_1 \dots, x_{d-1})$ by applying  stationary phase to the sub-integral,  $$I(\lambda, y_d, x_{d+1}, \dots, x_n): = \int_{\R^{d-1}}\int_{\R^{d-1}}  \chi(y', y_d)  e^{i \lambda \left( \sum_{j=1}^{d-1} y_j x_j - \half y_d  (x_1^2 + \cdots + x_{d-1}^2 +
x_{d+1}^2 + \dots + x_n^2) \right)}  d \vec y' d \vec x''. $$
As in Section \ref{STPHASE}, there exists a complete asymptotic expansion with leading term,
 $$I(\lambda, y_d, x_{d+1}, \dots, x_n) \simeq \lambda^{- \frac{d-1 + d-1}{2}}  \chi(0, y_d)e^{- \half i \lambda 
y_d (x_{d+1}^2 + \dots + x_n^2) }. $$
The higher order terms in $\lambda^{-1}$ involve the inverse Hessian derivatives of the amplitude multiplied by 
$e^{i \lambda R_3}$ where $R_3$ is the third and higher order terms of the phase. As noted below Lemma \ref{NONDEGd-1}, the inverse Hessian operators have smooth coefficients and so the remainders are as stated
in Section \ref{STPHASE}.

After applying stationary phase,  the integral is reduced to a series of which the leading term is,
$$\lambda^{- \frac{d-1 + d-1}{2}}  \int_{\R} \int_{B_1(\R^{n-d})} \chi(0, y_d) \hat{\psi}(y_d) e^{- \half i \lambda y_d (x_{d+1}^2 + \dots + x_n^2) }  dy_d dx_{d+1} \cdots d_{x_d}. $$
We put the integral in polar coordinates with radial variable $R^2 = (x_{d+1}^2 + \dots + x_n^2) $  to get,
$$I(\lambda, y_d, x_{d+1}, \dots, x_n) \simeq  \lambda^{- \frac{d-1 + d-1}{2}}  \int_{\R} \int_0^1 \chi(0, y_d) \hat{\psi}(y_d) e^{- \half i \lambda y_d 
R^2} dy_d  R^{n-d-1} dR,$$
where we obtain the new amplitude $\wt A_1$ from the stationary phase expansion and integration over the unit
sphere in $\R^{n-d}$.
Let us write $\rho = \half R^2$ to get 
$$I(\lambda, y_d, x_{d+1}, \dots, x_n) \simeq    \lambda^{- \frac{d-1 + d-1}{2}}  \int_{\R} \int_0^{\half} \chi(0, y_d) \hat{\psi}(y_d) e^{- \half i \lambda y_d \rho}  dy_d \rho^{\frac{n-d-1}{2}} \rho^{-\half} d\rho.$$
Note that the integrand is in $L^1$ for any $d \leq n -1$.
%Thus, the stationary phase expansion 
%reduces the phase to
%\begin{equation} \label{PSI2} \Psi_{\wt \omega, 2} (y_d, R): =  y_d \;
%R^2 : \R \times \R_+ \to \R. \end{equation}
Thus, the  model oscillatory integral reduces to
$$\begin{array}{lll}  \int_{\R}  \int_0^{\half}   \hat{\psi}(y_d)  e^{i \lambda y_d  R^2} R^{n-d-1}  dR d y_d & = &  \int_0^{\half} \psi( \lambda  R^2)  R^{n-d-1}  dR \\ &&\\ &&= \lambda^{- \frac{n-d}{2} } \int_0^{\lambda} \psi( \rho) \rho^{\frac{n-d-2}{2}} d\rho\\&&\\
& = & \lambda^{- \frac{n-d}{2} }   \int_0^{\infty} \psi( \rho) \rho^{\frac{n-d-2}{2}} d\rho + \ocal(\lambda^{-\infty}). \end{array}$$

We then rewrite the answer in terms of the Fourier transform \eqref{GS},\begin{equation} \label{FT1} \begin{array}{lll}   \int_{\R} \psi( \rho) \rho_+^{\frac{n-d-2}{2}} d\rho & = &  \int_{\R}  \hat{\psi}(s) \fcal^{ *}_{\rho \to s} \rho_+^{\frac{n-d-2}{2}}  ds
\\&&\\ &=& i e^{i \lambda \pi/2} \Gamma(\frac{n-d-2}{2}+ 1)  \int_{\R} \hat{\psi}(s) (s + i 0)^{- \frac{n-d}{2}} ds. \end{array} \end{equation}

Multiplying by the factor $\lambda^{n + d -2} \lambda^{-d +1}  $ from the prior calculations, the model integral becomes
\begin{equation} \label{MODELFT}\begin{array}{lll}  I_{model} (\lambda) & \sim &C_{n,d}  \lambda^{n + d -2} \lambda^{-d +1}  \lambda^{- \frac{n-d}{2} } \int_{\R} \hat{\psi}(s) (s + i 0)^{- \frac{n-d}{2}}ds\\ &&\\  &= &C_{n,d} \lambda^{ \frac{n+d}{2} -1 } \int_{\R} \hat{\psi}(s) (s + i 0)^{- \frac{n-d}{2}} ds.\end{array} \end{equation}

 \subsection{\label{FULLAMP} Completion of the proof of Theorem  \ref{main 2} }
 The purpose
of the above calculation was to exhibit a simple model which gives the same type of leading coefficient. We now complete the proof
of the first (short-time) statement of Theorem \ref{main 1} and Theorem \ref{main 2} by including  the additional amplitude factors $e^{i \lambda R_4}\wt A$
of  Lemma \ref{MODELPHASELEM} 
in the integrand.

\begin{proof}

We  repeat the analysis in Section \ref{DEGPHINTRO} but replacing the amplitude
$\chi(y) \hat{\psi_d} $ in \eqref{MODELOSCINT} by the full amplitude
$$\chi(y)  \hat{\psi}(y_d) \wt A (x_1, \dots, x_{d-1}, y_1, \dots, y_{d_1},  y_d, x_{d+1}, \dots, x_n) e^{i \lambda R_4(x_1, \dots, x_{d-1}, y_1, \dots, y_{d_1}, , y_d, x_{d+1}, \dots, x_n)}. $$
As in the proof of the stationary phase method in \cite[Volume I]{HoIV}, the factor $e^{i \lambda R_4}\wt A$ can be absorbed
into the amplitude and then produces the expansion reviewed in Section \ref{STPHASE}.  The stationary phase procedure applies with this additional factor as in the  model case since the
phase is the same as the model case and  since, by Lemma \ref{NONDEGd-1},  the inverse Hessian derivatives are smooth. 

Stationary phase in the variables $(x_1, \dots, x_{d-1}, y_1, \dots, y_{d-1})$ localizes the integrand to 
 $(x_1, \dots, x_{d-1}, y_1, \dots, y_{d-1}) = \vec 0.$  The new part of the integrand is,
$$ \wt A (\vec 0, \vec 0,   y_d, x_{d+1}, \dots, x_n) e^{i \lambda R_4(\vec 0, \vec 0 , y_d, x_{d+1}, \dots, x_n)}. $$
  We then use polar coordinates $(x_{d+1}, \dots, x_n) = R \omega'$ and again use that the model phase is
  $e^{- \half i \lambda y_d R^2}.$
  
 The other factor in the amplitude is  
$$ \wt A (\vec 0,  \vec 0,   y_d, x_{d}, \dots, x_n) $$
We set
$$\acal (y_d) = \int_{\omega' }  \wt A (\vec 0,  \vec 0,   y_d, x_{d}, \dots, x_n) d \mu.$$  
  
 The resulting integrals have the form,
 $$\lambda^{- \frac{d-1 + d-1}{2}}  \int_{\R} \int_{B_1(\R^{n-d})} \chi(0, y_d) \hat{\psi}(y_d) \acal(y_d) e^{- \half i \lambda y_d \sqrt{1 - R^2} -1)  } R^{n-d-1}  dy_d d R . $$
 Next we integrate in $y_d$ to get $$\fcal_{y_d \to \eta}  \chi(0, y_d) \hat{\psi}(y_d) \acal(y_d) |_{\eta =   (\half  \lambda  \sqrt{1 - R^2} -1)}.   $$
 We then get the explicit integral above, and inverse Fourier transform to get 
 $$\int_{\R} \left(\chi(0, y_d) \hat{\psi}(y_d) \acal(y_d)\right) (y_d + i 0)^{- \frac{n-d}{2}} dy_d. $$

 Modulo calculating $\acal(y_d)$, this gives all the details of  the leading order term  in Theorem \ref{main 1} and Theorem \ref{main 2}.  The calculation 
 of the density is given in Section \ref{HADPAR}.
 \end{proof}

\subsection{\label{SINGINTRO} Apriori properties of  $a^0_1(H, \psi)$. }

In view of the complications in computing the leading coefficient using a hybrid stationary phase 
and Fourier inversion method, we use an indirect argument
to prove that the regularization $(s + i 0)^{-\frac{n-d}{2}}$ in \eqref{acHpsi1} is the correct regularization. 

\begin{lemma} \label{a10LEM}  The functional $\psi \to a_1^0 (H, \psi) $ in \eqref{acHpsi1} 
 is a positive measure on $\R$
which  is supported on $\R_+$. 
\end{lemma}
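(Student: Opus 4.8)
The plan is to read off both assertions from the positivity of the sums \eqref{cpsi} together with the asymptotics of Theorem~\ref{main 2} (equivalently Theorem~\ref{main 1}). Throughout, fix once and for all a test function $\rho\ge 0$ admissible for Theorem~\ref{main 2} — that is, $\widehat\rho\in C_0^\infty$, $\widehat\rho(0)=1$, and $\operatorname{supp}\widehat\rho$ a small neighbourhood of $0$ meeting $\Sigma^1$ only at $0$ and contained in $(-r_0,r_0)$; such $\rho$ exists, e.g.\ $\rho=(2\pi c)^{-1}(\widehat\eta)^2$ with $\eta\in C_0^\infty(\R)$ real, even, $\ge 0$ and $c=(\eta*\eta)(0)$. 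First I would show positivity. The map $\psi\mapsto a^0_1(H,\psi)$ is linear, being the leading coefficient of a $\psi$-linear expansion (and \eqref{acHpsi1} is manifestly linear in $\widehat\psi$). Given $g\in C_0^\infty(\R)$ real, even, $\ge 0$ with $\operatorname{supp} g\subset(-r_0/2,r_0/2)$, put $\psi=(2\pi)^{-1}(\widehat g)^2$; then $\widehat\psi=g*g$ is a real, even, nonnegative element of $C_0^\infty((-r_0,r_0))$ while $\psi\ge 0$ pointwise. Hence $N^1_{\rho,\psi,H}(\la)=\sum_{j,k}\rho(\la-\la_j)\,\psi(\la_j-\mu_k)\,|\langle\gamma_H\phi_j,e_k\rangle_H|^2\ge 0$ for all $\la$, and Theorem~\ref{main 2} gives $\la^{-(n+d)/2+1}N^1_{\rho,\psi,H}(\la)\to C_{n,d}\,a^0_1(H,\psi)$, so $a^0_1(H,\psi)\ge 0$ for every such $\psi$ (we normalise, as elsewhere in the paper, so that $C_{n,d}>0$; equivalently one works with the manifestly real functional $L(\psi):=\lim_\la\la^{-(n+d)/2+1}N^1_{\rho,\psi,H}(\la)=C_{n,d}\,a^0_1(H,\psi)$).

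Next I would promote this to a positive measure. For each $\la$ set $\nu_\la:=C_{n,d}^{-1}\la^{-(n+d)/2+1}\sum_{j,k}\rho(\la-\la_j)\,|\langle\gamma_H\phi_j,e_k\rangle_H|^2\,\delta_{\la_j-\mu_k}$, a finite positive measure on $\R$. By Theorem~\ref{main 2}, $\int_\R\psi\,d\nu_\la\to a^0_1(H,\psi)$ for every $\psi$ with $\widehat\psi\in C_0^\infty((-r_0,r_0))$ (the symmetry hypotheses in Theorem~\ref{main 2} are inessential to its proof). Dominating the indicator of $[-R,R]$ by a nonnegative $\psi$ of the above type — exactly the device used in the proof of Corollary~\ref{theo JUMP}(ii) — yields $\sup_\la\nu_\la([-R,R])<\infty$ for every $R$. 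Thus $(\nu_\la)$ is vaguely precompact; any vague limit $\nu$ is a positive tempered measure with $a^0_1(H,\psi)=\int_\R\psi\,d\nu$ for all admissible $\psi$. (Distinct vague limits differ by a measure whose Fourier transform vanishes on $(-r_0,r_0)$; this is harmless, since $a^0_1(H,\cdot)$ depends on $\psi$ only through $\widehat\psi|_{(-r_0,r_0)}$.)

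It then remains to show $\operatorname{supp}\nu\subset\R_+$, i.e.\ $\nu\big((-\infty,-\delta)\big)=0$ for each $\delta>0$. By the portmanteau inequality for the open set $(-\infty,-\delta)$, $\nu\big((-\infty,-\delta)\big)\le\liminf_\la\nu_\la\big((-\infty,-\delta)\big)$, and $\nu_\la\big((-\infty,-\delta)\big)=C_{n,d}^{-1}\la^{-(n+d)/2+1}\sum_{j}\rho(\la-\la_j)\sum_{k:\,\mu_k>\la_j+\delta}|\langle\gamma_H\phi_j,e_k\rangle_H|^2$. So the claim is precisely that the spectral mass of the restriction $\gamma_H\phi_j$ on $H$-frequencies $\mu_k>\la_j+\delta$ is $o(\la^{(n+d)/2-1})$ after averaging $\la_j$ against $\rho(\la-\cdot)$ — this is the rigorous form of the remark after \eqref{cpsi} that the argument of $\psi$ is asymptotically positive; on $\Ss^d\subset\Ss^n$ the inner sum is literally empty, a degree-$N$ harmonic restricting to a sum of $\Ss^d$-harmonics of degree $\le N$. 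The far part $\mu_k\ge(1+\epsilon_0)\la_j$ ($\epsilon_0>0$ fixed) is $O(\la^{-\infty})$, by a dyadic decomposition in $c=\mu_k/\la_j$ and the rapid decay of the $c>1$ Kuznecov sums of \cite{WXZ21}. The near part $\la_j+\delta\le\mu_k\le(1+\epsilon_0)\la_j$ is controlled by the wave-front analysis of Section~\ref{WFSECT}: the canonical relation of the trace $S(t,\psi)$ in Propositions~\ref{M0PROP}–\ref{MAINFIOPROP} contains only covectors with $|\eta|_{g_H}=|\xi|_g$, i.e.\ only $\mu_k=\la_j$ survives to leading order, so after the $\rho$-localisation and with $\operatorname{supp}\widehat\psi$ small this contribution is of strictly lower order than $\la^{(n+d)/2-1}$. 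Combining, $\nu_\la\big((-\infty,-\delta)\big)\to 0$, which gives the support statement.

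Finally I would note that this closes the loop with \eqref{acHpsi1}: the parametrix of Section~\ref{FULLAMP} determines $a^0_1(H,\psi)$ as $\int_\R\widehat\psi(s)\,[\,c\,(s^{-(n-d)/2})_{\mathrm{reg}}+\cdots]\,ds$ up to the choice of regularisation of $s^{-(n-d)/2}$ at $s=0$, and among the admissible choices only $(s+i0)^{-(n-d)/2}$ — whose inverse Fourier transform in $s$ is $\propto w_+^{(n-d-2)/2}$ — is simultaneously a positive measure and supported in $\R_+$; so the positivity and support just proved force exactly the regularisation written in \eqref{acHpsi1}. The positivity and the promotion to a measure are soft; the genuine obstacle is the near-edge estimate $\la_j+\delta\le\mu_k\le(1+\epsilon_0)\la_j$, where $c=\mu_k/\la_j\to 1$ so that neither the $c<1$ nor the $c>1$ results apply off the shelf, and one must invoke the Fourier integral operator composition calculus of Section~\ref{WFSECT} (equivalently, read the $i0$-regularisation off the explicit short-time parametrix of Section~\ref{FULLAMP}).
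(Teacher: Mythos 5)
Your overall route is the same as the paper's: positivity of $a_1^0(H,\cdot)$ is read off from the nonnegativity of the sums \eqref{cpsi} together with the already-proved asymptotics (Theorem \ref{main 2}/\ref{main 1}), the support in $\R_+$ is traced to the ordering $\lambda_j-\mu_k$ and the smallness of the contributions with $\mu_k>\lambda_j+\delta$, and the conclusion is then used to pin down the $(s+i0)^{-\frac{n-d}{2}}$ regularization in \eqref{acHpsi1}. Your vague-limit/portmanteau scaffolding with the measures $\nu_\lambda$ is a careful way of making explicit what the paper states as ``positive distribution, hence positive measure,'' and the positivity half is fine.

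The one genuine gap is exactly where you flag it: the near-edge window $\lambda_j+\delta\le\mu_k\le(1+\epsilon_0)\lambda_j$. The justification you offer -- that the canonical relation of $S(t,\psi)$ only contains covectors with $|\eta|_{g_H}=|\xi|_g$, so ``only $\mu_k=\lambda_j$ survives to leading order'' -- does not produce the needed bound $o(\lambda^{\frac{n+d}{2}-1})$ for the \emph{unweighted} window sum appearing in $\nu_\lambda\big((-\infty,-\delta)\big)$: the wave front analysis of Section \ref{WFSECT} controls traces smoothed by Schwartz weights $\psi(\lambda_j-\mu_k)$, and such weights cannot isolate an additive window at distance $\delta$ from the edge (at the level of canonical relations, $\mu_k=\lambda_j$ and $\mu_k=\lambda_j+\delta$ are indistinguishable). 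The obvious repair -- dominating the window indicator by an admissible $\psi\ge 0$ -- only reproduces the bound $C\,a_1^0(H,\psi)\lambda^{\frac{n+d}{2}-1}$, which is circular, since vanishing of the leading coefficient on the negative axis is precisely what you are trying to prove. The paper dispatches this point in one line by invoking, as a known input, that the Fourier coefficients \eqref{FC} are negligible for $\mu_k\ge\lambda_j+\epsilon$ (the ordering convention plus the decay of the supercritical coefficients); your dyadic-in-$c$ argument covers only the multiplicative regime $\mu_k\ge(1+\epsilon_0)\lambda_j$ cited from the $c>1$ results, so you should either quote the additive negligibility statement as the paper does, or actually supply an estimate for the near-edge window -- the appeal to the composition calculus, as written, is a restatement of the claim rather than a proof of it.
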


\begin{proof} Since it is now proved that \eqref{cpsi} has an asymptotic expansion of order $ \lambda^{\frac{n+d}{2}} $,
the leading coefficient is given by,
$$a_1^0 (H, \psi) := \lim_{\lambda \to \infty} \lambda^{-\frac{n+d}{2}} N_{\psi, H}^{1} (\lambda). $$ 
Since $N_{\psi, H}^{1} (\lambda) \geq 0$ when $\psi \geq 0$, also
$a_1^0 (H, \psi) \geq 0 $ if $\psi \geq 0$. To prove that  it is supported on $\R_+$ we note that by the convention in \eqref{cpsi}, the differences of eigenvalues
are  ordered as $\lambda_j - \mu_k$, and from the fact that the Fourier coefficients \eqref{FC} are negligible for $\mu_k \geq \lambda_j +
\epsilon$ we see that the positive measure  $ a_1^0(H, \psi)  $ is supported on the positive reals.

In \eqref{MODELFT} we get the inverse Fourier transform  of this measure. It is determined on $(0, \infty)$ by the stationary phase analysis above, which 
shows that it agrees with the formula \eqref{acHpsi1}  for $\psi$ such that $\hat{\psi} = 0$ in some interval around 
$0$. This leaves two points unclear: (i) how the Fourier transform is regularized at $s=0$; (ii) whether there exists
a component of the Fourier transform supported at $s=0$.
Regarding point (i), there is apriori only one regularization $s^{-\frac{n+d}{2}}$ for $s > 0$ whose  Fourier transform is a temperate  positive measure  supported on $\R_+$. Indeed, all regularizations must agree on $(0, \infty)$ and their
differences must be supported at $s=0$. Regarding point (ii) the analysis of the model phase shows that there is no component supported at $s=0$.
\end{proof}

%Analysis of a model integral
%also gives the left side of the formula (see Section \ref{MODELSECT}).  

\subsection{\label{BESSELSECT} Relation to Bessel integrals}

The  singularity at $s=0$ is therefore universal. The  above model integral with $\wt A = 1$ is essentially the `double Bessel integral'. To explain this, we recall the well-known formula,    \begin{equation} \label{FTSn}|\lambda \xi|^{- \frac{n-2}{2}} J_{\frac{n-2}{2}} (2 \pi \lambda |\xi|) = \int_{S^{n-1}} e^{2 \pi i \lambda \langle \xi, \omega \rangle} dS(\omega), \;\;\; (\xi \in \R^n). \end{equation}
   Stationary phase asymptotics apply when $|\lambda \xi | \to \infty$ but do not apply when $|\lambda \xi| \leq M$ for some $M > 0$.  The rapid decay of the  integral (as $\lambda \to \infty$),
$$\begin{array}{l} \int_{\R^n} \hat{\psi}(y) \left(\int_{S^{n-1}} e^{i \lambda \langle y, \omega \rangle } d \omega \right) dy= \int_{S^{n-1}} \psi(\lambda \omega) d \omega = \psi(\lambda), \;\; \rm{if}\;\; \psi(y) = \psi(|y|),
\end{array}$$
for a radial function $\psi$ on $\R^n$ is obvious from the fact that there are no critical points
of $y \to \langle y, \omega \rangle$,  but    becomes opaque if one tries to first  apply
stationary phase in $\omega$, or to express it in terms of Bessel functions,
$$\begin{array}{lll} \int_{\R^n} \hat{\psi}(y) \left(\int_{S^{n-1}} e^{i \lambda \langle y, \omega \rangle } d \omega \right) dy
& = & \int_0^{\infty} \hat{\psi}(r) (\lambda r)^{-\frac{n-2}{2}} J_{\frac{n-2}{2}}(\lambda r) r^{n-1} dr. 
\end{array}$$
The well-known (stationary phase)  asymptotics of $  (\lambda r)^{-\frac{n-2}{2}}  J_{\frac{n-2}{2}}(\lambda r)$ only
apply when $\lambda r \to \infty$. 

    The singularity
   at $s=0$ is essentially of this type.  To be more exact, it arises in the Euclidean case as  the ``double-Bessel'' function,
    \begin{equation} \label{DB} \begin{array}{lll} J^1_{\R^d, \R^n} (\lambda, y, \psi): & = &  \int_{\Ss^{n-1}}  \int_{\Ss^{d-1}} \hat{\psi} ( \langle  y,
   \wt \omega \rangle)  e^{i \lambda \langle y,  \pi_{\R^d} \omega - \wt \omega \rangle}  dS_n (\omega) dS_d(\wt \omega), \;\; y \in \R^d, \\ &&\\
   & = &  (r \lambda)^{- \frac{n-2}{2}} J_{\frac{n-2}{2}}(\lambda r)
 (r \lambda)^{- \frac{d-2}{2}} J_{\frac{d-2}{2}}(\lambda r), \;\; \rm{if}\;\; \hat{\psi} =1, \;\; r = |y|.  \end{array}\end{equation}
% When $d=1$ then $S^{d-1}= \pm 1, \wt \omega =\pm e_1$ and
 %    \begin{equation} \label{DB1} \begin{array}{lll} J^1_{\R^1, \R^n} (\lambda, y, \psi): 
%   & = &  (r \lambda)^{- \frac{n-2}{2}} J_{\frac{n-2}{2}}(\lambda r) \left(\sum_{\pm} e^{\mp i \lambda y_1} \right)
 %  =  (r \lambda)^{- \frac{n-2}{2}} J_{\frac{n-2}{2}}(\lambda r) (\cos  \lambda y_1).   \end{array} \end{equation}
%When $n=2, d= 1$, the right side is $J_0(\lambda r) (\cos  \lambda y_1)$. 
%Here, $r$ corresponds to $s$ in  \eqref{acHpsi1} and (heuristically) the singularity is the price one pays for applying %stationary phase asymptotics
% for values of $r \lambda $ where they are not valid.
The model phase \eqref{PSIMODEL} approximates the phase of this integral. The parameter $s$ is $|y|$. The asymptotics of \eqref{DB} for
$\lambda R \leq C$ are obtained by Taylor expansion and are clearly of larger order in $\lambda$ than the stationary phase asymptotics
for $(\lambda r) \to \infty$.

\subsection{\label{COMPSECT} Comparison to the case $c < 1$}
The asymptotics above involve test functions $\rho$ such that the support of $\hat{\rho}$ is an interval $[-\epsilon,
\epsilon]$ which contains no periods $T \not=0$ of the $G^t_M$. We now compare the results for such test
functions in the case $c < 1$ and $c=1$. Later, we will compare results for general test functions $\hat{\rho}$.
In the case $c < 1$ of \cite{WXZ21} the phase $\langle y, \omega + \wt \omega \rangle$ of Proposition \ref{N1PARAM} and of \eqref{PSI} 
gets replaced by $\Psi_c (\wt \omega; y, \omega) = \langle y,  \omega - c \wt \omega \rangle$. Its critical set is given by $C_{\Psi_c} = \{ \pi_H \omega = c \wt \omega, y =  \langle y, \omega \rangle \pi_H \omega$, rather than the set \eqref{CPSI}. The main difference is that the critical point
equations do not constrain $\pi_H^{\perp} \omega \in N^* H$ when $c < 1$  except in its norm $|\pi_H^{\perp} \omega | = \sqrt{1 - c^2}$; when $c=1$,
the normal component vanishes.  Hence, 
$\dim C_{\Psi_c} = \dim C_{\Psi} + (n-d-1)$, and   $\dim C_{\Psi_c} =  d-1 + (n-d - 1) =  n -2$.  
%From an invariant viewpoint,  $ C_{\Psi_c} = S^c_q H \oplus N^c_q H$, and $\dim N_q^c H = n-d -1$. \edit{For $c=1$?}
When $H$ is a hypersurface, 
 $\dim N^c_q H = 0$ and so $\dim C_{\Psi}  = \dim C_{\Psi_c} +1$. When $\dim H = n-2$ then $\dim N_q^c H =1$
 and $\dim C_{\Psi}  = \dim C_{\Psi_c}$. Otherwise, $\dim C_{\Psi}  > \dim C_{\Psi_c}$.
This difference in dimensions is responsible for the change in order of growth of $N^c_{\psi, H}(\lambda)$
from $\lambda^{n-1}$ when $c < 1$ \cite[Theorem 1.1]{WXZ21}  to $\lambda^{\frac{n+d}{2} }$ in Theorem \ref{main 1}. Moreover, the degeneracy of the\eqref{PSI} when $y = 0$ does not occur when $c < 1$ since in the coordinates above, $\omega = (x_1, \dots, x_d, 
x_{d+1}, \dots, x_n)$ with $x_1^2 + \cdots + x_d^2 = c^2, x_{d+1}^2 + \cdots + x_n^2 = 1 -c^2$ and 
$\langle y,  \omega -  c \wt \omega \rangle =   \sum_{j=1}^{d-1} y_j x_j +  y_d (x_d - c)  $, and the  analogue of the local model \eqref{PSIMODEL} for $c=1$  is 
$\Psi_{c, model} (\vec y, \vec x) =  \sum_{j=1}^{d-1}  y_j x_j  - y_d (\sqrt{c^2 - (x_1^2 + \cdots + x_{d-1}^2)} - c).  $ The critical points are given by,
$x_j = y_j =  0, (j=1, \dots, d-1), x_d = c $ but $y_d$ and  $(x_{d+1}, \dots, x_n)$ are unconstrained by the critical point equation except that the norm 
of  $(x_{d+1}, \dots, x_n)$ is
$\sqrt{1 -c^2}$. The analogue of the  equation \eqref{GEO1} when $c < 1$ is $G_H^{-s} \pi_H G^{t + c s} (q, \xi)
= \pi_H (q, \xi)$, and there are  solutions along the diagonal only when $s=t= 0$, since otherwise  the bi-angle must make a non-zero angle with $H$. 
Since $(x_{d+1}, \dots, x_n)$ lie in the critical set,  the normal  Hessian has no $(x_{d+1}, \dots, x_n)$-block as in Section \ref{SPSECT} and therefore
does not acquire the factor $|y|^{-(n-d)}$.

\section{\label{HADPAR} Calculation of the amplitude in     \eqref{acHpsi1} using the Hadamard parametrix} 

In this section, we use Hadamard parametrices for $U_M(t, x, y)$ resp. $U_H(s, x, y)$ to  give a  formula \eqref{ALT} for $N^1_{\psi, \rho, H}$. The Hadamard parametrices are simple and explicit enough to identify the geometric
invariants in \eqref{acHpsi1}. On the other hand, the calculations in polar coordinates become singular at $r=0$ and the Hadamard parametrix does not
seem to give a simple approach to the singularity at $s=0$.

The Hadamard parametrix uses the phase $\sigma (t^2 - r^2(x, y) $ where $\sigma > 0$ and where $r(x, y)$ is the Riemannian distance
between $x, y$. The distance squared $r^2(x,y)$ is smooth in a neighborhood of the diagonal $x=y$ but is not smooth when $y$ is a cut point
to $x$. Hence we need to cutoff the parametrix using a cutoff $\chi(x,y)$ sufficiently near the diagonal so that $r^2$ is smooth. We absorb the
cutoff into the amplitude and suppress it from the notation.  The neighborhood
is the union of the same balls $B_x(\epsilon)$ in the definition of the H\"ormander parametrix. We then denote the volume density in geodesic coordinates centered
 at $x$ by $dV_g = \Theta(x, y) dy$ (see Section \ref{HADPAR} and  \cite{Be} for  background on $\Theta(x,y)$).
%\edit{Define Jacobi
%fields and $\Theta(x,y)$ here}

The Hadamard parametrices for the half-wave group $U_M(t, x, y)$ of $M, $ resp. $U_H(t, x, y)$ of $H$ are given (at least
for $t \geq 0$)  by
$$\left\{ \begin{array}{l} U_M(t, x, y) = \int_0^{\infty} e^{i \sigma (t^2 - r_M^2)} A_M(t, x, y, \sigma) \sigma_+^{\frac{n-1}{2}} d \sigma, \\ \\
U_H(t, x, y) = \int_0^{\infty} e^{i \sigma (t^2 - r_H^2)} A_H(t, x, y, \sigma) \sigma_+^{\frac{d-1}{2}} d \sigma. \end{array} \right.,$$
where $r_M$ resp. $r_H$ are the distance functions of $M$ resp. $H$. Since $H$ is totally geodesic, $r_H = r_M$ on $H \times H$.   The amplitudes $A_M, $ resp. $A_H$ are divisible by $t$  have  the asymptotic symbol expansions as $\sigma \to \infty$,
$$\left\{ \begin{array}{l} A_M(t, x, y, \sigma) \simeq t \sum_{j=0}^{\infty} U_j^M(x, y) \sigma^{-j}, \\ \\
A_H(t, x, y, \sigma) \simeq t \sum_{j=0}^{\infty} U_j^H(x, y) \sigma^{-j }. \end{array} \right. $$

 The original Hadamard-Riesz parametrix constructions did not express these wave kernels as oscillatory integrals. Rather, they
expressed them as infinite series in what are now called Riesz kernels. For instance,
\begin{equation} \label{COS} \cos t\sqrt{\Delta}(x,y) =
 C_0 |t| \sum_{j=0}^{\infty}(-1)^j U_j(x,y)\frac{
(r^2-t^2)_{-}^{j-\frac{n -3}{2} - 2}}{4^j \Gamma(j - \frac{n-3}{2}
- 1)}
 \;\;\;\mbox{mod}\;\;C^{\infty}. \end{equation}
 The oscillatory integral formula in \cite{Be} is obtained by using  the Fourier transform formula \eqref{GS}.
 % \cite[Page 172]{GS}, 
%\begin{equation} \label{FCALFORM} \fcal \frac{\sigma_+^{\lambda}}{\Gamma(\lambda +1)} = i\; e^{i \lambda \pi/2} (x + i %0)^{-\lambda -1}. \end{equation}

For small $t$, and for any Riemannian manifold $(X, g)$  of dimension $n$, $U_X(t, x, y)$ is to leading order similar to the Euclidean
case  of $\R^n$, whose  half-wave kernel is given by  \begin{equation} \label{UFORM} U_{\R^n}(t, x, y) = C_n'\; \frac{ t}{ ( (t + i 0)^2 - r^2)^{ \frac{n +1}{2}}} = C_n\;\lim_{\tau \to 0} \frac{ i t}{ ( (t + i \tau))^2 -  r(x, y)^2  )^{ \frac{n + 1}{2}}}, \end{equation}
for  constants $C_n', C_n$ depending only on the dimension. A good way to understand the relevant regularizations  is that  $U(t)  = e^{it \sqrt{-\Delta}}$ has a holomorphic extension to $U(t + i \tau)$ for $\tau > 0$ because
$\sqrt{-\Delta}$ is a positive operator. Note that \eqref{UFORM} is Poisson kernel at imaginary time and that $U(t, x, y)$
does not have finite propagation speed, hence does not satisfy Huyghen's principle and is therefore different from 
the cosine kernel \eqref{COS}.

We briefly review the construction of the amplitude by a series of transport equations. Our main references are \cite{Be,Zel12}. The following complications should be kept in mind.
\begin{itemize}

\item As is carefully explained in \cite{Be}, the parametrices for $\cos t \sqrt{-\Delta}$ and for $\frac{\sin t \sqrt{-\Delta}}{\sqrt{-\Delta}}$ are first
derived for $t > 0$ and then extended to all $t$ using the even/odd property of cosine/sine. The coefficient $|t|$ in the formula \eqref{COS} 
seems singular but of course the kernel is analytic in $t$. This only indicates that the parametrix (but not the wave kernel)  is singular at $t=0$ and $t=0$
due to polar coordinate singularities; this is discussed further below.  \bigskip 

\item The regularization procedure of Riesz (by analytic continuation of Riesz kernels) introduces many constants. One arises from the Fourier transform in \eqref{GS}.   More are introduced
by requiring that $U(0, x, y) = \delta_x(y)$.  
In \cite[(11)]{Be}, the  coefficients $U_k$ are converted to coefficients $u_k$ on a manifold of dimension $d$   by the formula $U_k(x, y) = C_0 e^{- i (\frac{d-1}{2} + k) \pi/2} 4^{-k} u_k(x, y). $ The product of these constants and others arising in stationary phase constitute the constant $C_{n,d}$
in Theorem \ref{main 1} and are ultimately responsible for the shape of \eqref{acHpsi1}. To avoid a lengthy (and futile) chasing of constants, we
note that the final coefficients are universal and can be calculated on $\R^n$ or $S^n$ (see Section \ref{Snexact} for the exact formulae on spheres).

\end{itemize}
\bigskip

We now recall some of the details of the Hadamard parametrix construction for $U_M(t) = \exp i t\sqrt{-\Delta}_M $ as an oscillatory integral
of the form,
\begin{equation} U_M(t, x, y) = C_{n,d} \; t \;  \int_{0}^{\infty} e^{i \sigma (r^2-t^2)} \sigma_{+}^{\frac{n-1}{2} - j}
\sum_{j=0}^{\infty} W_j(x,y) \sigma_{+}^{ - j}
d\theta
 \;\;\;\mbox{mod}\;\;C^{\infty}  \end{equation}
where $\sigma^s_{+}$ is the regularization of the distribution $\sigma^s$ for $s$ with negative real part (see  \cite{Be}).  

Hadamard himself did not treat $U_M(t)$ but rather the cosine propagator $\cos t \sqrt{-\Delta} t$ and the sine propagator
$(-\Delta^{-\half}) \sin t \sqrt{-\Delta} $, both of which are formally Taylor series in $\Delta$. To obtain the Hadamard parametrix
for $U_M(t)$ one may apply $\sqrt{-\Delta} $ to $(-\Delta^{-\half}) \sin t \sqrt{-\Delta} $ to obtain its imaginary part and add it to
$\cos t \sqrt{-\Delta}$. 

The 
 Hadamard-Riesz coefficients $W_j$ for  $(-\Delta^{-\half}) \sin t \sqrt{-\Delta} $  are  determined
inductively by the transport equations, 
\begin{equation}\begin{array}{l}
 \frac{\Theta'}{2 \Theta} W_0 + \frac{\partial W_0}{\partial r} = 0\\ \\
4 i r(x,y) \{(\frac{k+1}{r(x,y)} +  \frac{\Theta'}{2 \Theta})
W_{k+1} + \frac{\partial W_{k + 1}}{\partial r}\} = \Delta_y W_k.
\end{array}\end{equation} The solutions are given by:
\begin{equation}\label{HD} \begin{array}{l} W_0(x,y) = \Theta^{-\half}(x,y) \\ \\
W_{j+1}(x,y) =  \Theta^{-\half}(x,y) \int_0^1 s^k \Theta(x,
x_s)^{\half} \Delta_2 W_j(x, x_s) ds
\end{array} \end{equation}
where $x_s$ is the geodesic from $x$ to $y$ parametrized
proportionately to arc-length and where $\Delta_2$ operates in the
second variable. As above, 
%\begin{equation} \label{W0} \left\{ \begin{array}{l} W^M_0(x, y) = \Theta^{-\half}_M(x, y), \\ \\
%W^H_0(x, y) = \Theta_H^{-\half}(x, y). \end{array} \right., \end{equation}}
 $\Theta(x,y)$ is the volume density in geodesic normal coordinates based at $x$, $dV_g = \Theta(x, y) dy. $ If we change
to geodesic  polar coordinates $(r, \omega)$, we get $dV_g = J(r, \omega) dr d \omega$ where 
% $$J(r, \omega) = r^{n-1} \Theta(r, \omega, y)
%=  ||V_1(r) \wedge \cdots \wedge V_{n-1}(r) \wedge \frac{\partial}{\partial r} ||$$
where $J$ is defined by \eqref{VOLDEN} 
%$\Theta(r, \omega)= \Theta(x, y)$ when $x = (r, \omega)$. Above, we use the  well-known formula that
%$J(r, \omega) = ||V_1(r) \wedge \cdots \wedge V_{n-1}(r) \wedge \frac{\partial}{\partial r} ||$
%where $V_j$ is a basis of vertical Jacobi fields along the geodesic with direction $\omega$ and initial point $x$, i.e. Jacobi55fields satisfying $V_j(0) = 0$ and $\frac{DV_j}{D t}(0)$ is an orthonormal basis of the normal space to the geodesic. Also,
%$\frac{\partial}{\partial r} $ is the unit tangent vector to the geodesic.  
 In particular, we see that apart from an overall factor of $t$, the amplitudes $A_M, $ resp. $A_H$, above are independent of $t$. The parametrix
 for $\cos t \sqrt{-\Delta}$ is obtained by differentating that for $(-\Delta^{-\half}) \sin t \sqrt{-\Delta} $  in $t$ and the parametrix for $U_M(t)$ is 
 of course obtainined by applying $\sqrt{-\Delta}$ to the latter and adding the real part.  It is straightforward to see that the leading order amplitude
 remains $J$.  The details are given in \cite{Zel12} and will not be repeated here.

Since $U_H(-s) = U_H(s)^*$, the Hadamard parametrix for $U_H(-s)$ has the form, 
\begin{equation} \label{CXCONJ} U_H(-s, x, y) = \overline{U_H(s, y, x)} = \int_0^{\infty} e^{- i \sigma (s^2 - r_H^2)} \overline{A_H(s, y, x, \sigma) }d \sigma. \end{equation}
If we use the Hadamard parametrices and recall \eqref{CXCONJ},  \eqref{NpsirhoDEF}   becomes,

\begin{equation} \label{ALT} \begin{array}{l} 
%\int_{H \times H} \int_{\R} \int_{\R}  \hat{\psi}(s) \rho(t)  \hat{\psi}(s) e^{it \lambda} 
%U_H(-s, q, q') U_M(c t + s, q, q')  dV_H(q) dV_H(q')  
% ds dt \\ \\
%= \int_{H \times H} \int_{\R}\int_{\R} \int_0^{\infty} \int_0^{\infty}
%\int_{T_q^* M} \int_{T_{q'}^* H} 
%\hat{\psi}(s) \hat{\rho}(t)  
% e^{it \lambda} 
 %e^{i \langle (\exp^M_{q})^{-1} (q'), \xi \rangle} e^{i\langle (\exp^H_{q})^{-1} (q'), \eta \rangle}  e^{i ( t + c s)  |\xi|} e^{-i  s |\eta|} \\ \\
% A_M( t+  s , q, q', \xi) A_H( - c s , q, q', \eta) 
%  e^{i \sigma_1 ((t+s)^2 - r_M^2(q,q'))} e^{i \sigma_2 (s^2 - r_H^2(q,q'))} \\ \\
% A_M(t+s, q, q', \sigma_1)  A_H(-s, q, q', \sigma_2)  ds dt d \sigma_1 d\sigma_2 dV_H(q) dV_H(q') 
 %d \xi d \eta.
%\\ \\
N^1_{\psi, \rho, H}(\lambda) 
%= \lambda^{n +d-2}  \int_{H \times H} \int_{\R}\int_{\R} \int_0^{\infty} \int_0^{\infty}
%\int_{T_q^* M} \int_{T_{q'}^* H} 
%\hat{\psi}(s) \hat{\rho}(t)  
% e^{it \lambda} 
 %e^{i \langle (\exp^M_{q})^{-1} (q'), \xi \rangle} e^{i\langle (\exp^H_{q})^{-1} (q'), \eta \rangle}  e^{i ( t + c s)  |\xi|} e^{-i  s |\eta|} \\ \\
% A_M( t+  s , q, q', \xi) A_H( - c s , q, q', \eta) 
%  e^{i\lambda \sigma_1 ((t+s)^2 - r_M^2(q,q'))} e^{i\lambda \sigma_2 (s^2 - r_H^2(q,q'))} \\ \\
 %A_M(t+s, q, q', \sigma_1)  A_H(-s, q, q', \sigma_2)  ds dt d \sigma_1 d\sigma_2 dV_H(q) dV_H(q') \\ \\
 =
 \lambda^{\frac{n +d}{2}+1}  \int_{H \times H} \int_{\R}\int_{\R} \int_0^{\infty} \int_0^{\infty}
%\int_{T_q^* M} \int_{T_{q'}^* H} 
\hat{\psi}(s) \hat{\rho}(t)  
 e^{-it \lambda} 
  e^{i\lambda \left( \sigma_1 (t+s)^2 - \sigma_2 s^2 - (\sigma_1 - \sigma_2) r_H ^2(q,q'))\right)}\\ \\ 
s (t + s) A_M(q, q', \lambda \sigma_1)    A_H(q, q',\lambda  \sigma_2)  \sigma_{1 +}^{\frac{n-1}{2}}   \sigma_{2 +}^{\frac{d-1}{2}} ds dt d \sigma_1 d\sigma_2 dV_H(q) dV_H(q').

\end{array}  \end{equation}
  Again the amplitudes$A_H, A_M$ are defined above and incorporate the
notational conventions for the constants. This is the starting point for the stationary phase analysis in the
next section \ref{HDPFSECT}.

We remark that the     Hadamard parametrix is singulart  at $t=0$ on the diagonal, which motivated the
  the use of  the H\"ormander parametrix method in Section \ref{HORPAR}. 
The singularity arises because the phase of the Hadamard parametrix is expressed in geodesic polar coordinates which become singular on the diagonal. As a result, the canonical relation,
$$C = \{(t, 2t \sigma, x, - \sigma d_x r^2, y, \sigma d_y r^2): t^2 = r^2\} \subset T^*(\R \times M \times M),$$
generated by the phase $\sigma (t^2 - r^2)$ of the half-wave kernel  has an apparent singularity (a $0$ in the wave front relation) when $r = t = 0$, whereas
in fact when $t=0$ it is the graph of the identity map. I.e. the co-normals to the distance spheres collapse to the unit cotangent space at the origin.

\subsection{\label{Snexact} Exact calculations for $M = \Ss^n$. } Since $U_M(t)$ is more complicated than  $(-\Delta^{-\half}) \sin t \sqrt{-\Delta} $  
or  $\cos  t \sqrt{-\Delta} $, we illustrate the result in the case of the standard sphere.  
We define the wave kernel  $U_{\Ss^n}(t) = \exp i t A$ in terms of  the degree  operator $A = \sqrt{- \Delta + \frac{(n-1)^2}{4}} - \frac{n-1}{2}$, which has eigenvalue $N$ in the
space of $N$th degree spherical harmonics. Then as calculated in \cite{Tay},
$$U_{\Ss^n}(t, x, y) = \frac{2 i \sin t}{|S^{n-1}|  } \lim_{\epsilon \to 0}  (2 \ \cos  (t + i \epsilon) - 2 \cos r(x,y))^{-\frac{n+1}{2}}. $$
%Formally, the limit (in the sense of distributions) gives $$U_{\Ss^n}(t, x, y) =  2^{-\frac{n+1}{2}} \frac{2 i \sin t}{|S^{n-1}|  }  (2 \cos t - 2 \cos r(x,y))^{-%\frac{n+1}{2}},$$ 
%but this formula is not explicit as to the regularization at $t=0$. 
As above, $U_{\Ss^n}(t, x, y) $ has a holomorphic
extension in $t$ to the upper half-plane and on the real $t$ axis is the boundary value of this holomorphic function.

\subsection{\label{HDPFSECT} Determination of the amplitude in Theorem \ref{main 1} by the Hadamard parametrix method}
We employ the Hadamard parametrix to give a simple determination of the amplitude in the leading coefficint of Theorem \ref{main 1}. As mentioned above, we denote any  dimensional constant by  $C_{n,d}$; 
it is understood that the constant may change in each usage. 

In what follows, we
 restrict the stationary phase analysis to the regimes $t + s > 0$ for $M$ and $s > 0$ for $H$ or
 $t + s < 0$ for $M$ and $s < 0$ for $H$ and show that there are no  points in the canonical relation $C$ in  the complementary cases.
We therefore break up the proof into the cases $t + s > 0, s > 0$ and $t + s < 0, s < 0$ and explain (in more detail than above) why the complementary
cases $t + s < 0, s > 0$ and $t + s > 0, s < 0$ do not contribute to the asymptotics. The universal constants arising
in the two cases may be different and we denote them by $C_{n,d}^+$ and $C_{n,d}^-$.

\subsection{Critical point analysis for $t + s > 0, s > 0$}

\begin{proof}

 We rewrite the integral  \eqref{ALT} in geodesic polar coordinates centered at $q \in H$. 
\begin{equation} \label{INT} \begin{array}{l}
N^1_{\psi, \rho, H}(\lambda) =   \lambda^{\frac{n +d}{2}+1}   \int_{S^* H}  \int_{\R} \hat{\psi} (s) I_{\rho} (q, s, \omega, \lambda)  dV_H(q) d S(\omega), 
\end{array}
\end{equation} with phase,
\begin{equation} \label{phaseDEF} \begin{array}{lll} \Psi & = &  -t +  \left( \sigma_1 (t+s)^2 - \sigma_2 s^2 - (\sigma_1 - \sigma_2) r_H ^2(q,q'))\right) \\&&\\&& = -t  +  \sigma_1[ (t + s)^2 - r^2]  - \sigma_2 [s^2 - r^2], \end{array} \end{equation}
where \begin{equation} \label{IDEF}\begin{array}{l}
I_{\rho}(q, s, \omega, \lambda): = 
 \int_0^{\infty} \int_{\R} \int_0^{\infty} \int_0^{\infty}
\hat{\rho}(t)  
 e^{i  \lambda \Psi} \wt{A}(s,t,  r, \lambda \sigma_1, \lambda \sigma_2)  r^{d-1}  \sigma_1^{\frac{n-1}{2}}   \sigma_2^{\frac{d-1}{2}}  dt dr d \sigma_1 d\sigma_2,
\end{array}  \end{equation}
where $\sigma_j^z$ is regularized by $(\sigma_j)_+^z$ as described in the previous section, and where
$$\wt{A} (s, t, r, \lambda \sigma_1,  \lambda \sigma_2): = (s +t) s
\int_{S^* H }  A_{M \times H}  (r, \omega, \lambda \sigma_1,\lambda \sigma_2) dV_H(q) d S(\omega). $$
Here, 
$$A_{M \times H} (q,q', \lambda \sigma_1, \lambda \sigma_2): =  A_M (q, q', \lambda \sigma_1) \overline{A_H(q,q', \lambda \sigma_2)}, $$
is a semi-classical symbol with principal term as $\lambda \to \infty$,
\begin{equation} \label{A0} A^0_{M \times H} = \Theta_M^{-\half} (q,q') \Theta_H^{-\half}(q,q') =  \Theta_H^{-1}(q,q')  . \end{equation}
We treat $q$ as a parameter and write $q' = \exp_q r \omega$ with $\omega \in T_qH$ (identified with $S^{d-1}$) 
and $r = r_H(q,q')$. As above, we note that although $ \wt{A}(s,t,  r, \sigma_1, \sigma_2)$ apriori depends on $s,t$,
in fact it is independent of $s,t$.
We now impose the restriction that $t + s > 0, s > 0$ and therefore write \eqref{IDEF} as $I^+$.

%The critical point equations for the phase $$\Psi = -t  +  \sigma_1[ (t + s)^2 - r^2]  - \sigma_2 [s^2 - r^2] $$
%in the variables $(t, \sigma_1, r, \sigma_2, s)$ with $t +s \geq  0, s \geq 0$ are,
%$$\left\{ \begin{array}{l} d_{\sigma_2}  \Psi_1 = 0 \iff r^2  = s^2, \\ \\ d_{\sigma_1} \Psi = 0 \iff r^2 = (t +s)^2, \\ \\
%d_t (-t + (t + s)^2 \sigma_1) =0 \iff 1 = 2 (t +s) \sigma_1, \\ \\ d_r [- \sigma_1 r^2 + \sigma_2 r^2] = 2 r (\sigma_2 - \sigma_1) = 0 \\ \\ d_s ((t +s)^2 \sigma_1 %- \sigma_2 s^2) = 0 \iff 2(t +s) \sigma_1 - 2 s \sigma_2 =0.
%\end{array} \right. $$
%\edit{Got a different sign for $s$ than for sphere case in next section}

%The first two equations imply that $t=0, r = s$ at any critical point in the support of the test functions $\hat{\rho}(t),
%\hat{\psi}(s)$. The $d_r$ critical point equation implies then that $\sigma_1 = \sigma_2$ when $s\not=0$.
%This is also the content of the $d_s$ critical point equation, suggesting correctly that 
%the Hessian  of $\Psi$ at a critical points is degenerate in the variables $t, \sigma_1, r, \sigma_2, s$;  it has a one-dimensional kernel,  spanned by the %vector 
%$
% \begin{pmatrix} 0 \\  1 \\  - \frac{1}{ 2 s^2} \\  - \frac{1}{2 s^2} \\  1 \end{pmatrix} =
% \frac{\partial}{\partial s} +
%\frac{\partial }{\partial r} - \frac{1}{2 s^2} (\frac{\partial}{\partial \sigma_1} + \frac{\partial}{\partial \sigma_2}). $
x

%The Hessian $\wt \hcal_2 $ of the phase  
%in the variables $(\sigma_1, \sigma_2, t, r)$ is given on the critical set  $Cr = \{\sigma_j = \frac{1}{2s}, t = 0, r = s\}$  by,

%$$ \begin{pmatrix} & t & \sigma_1&  r &\sigma_2 \\ &&&&\\
%t & 2 \sigma_1 & 2 (t + s) & 0  & 0 \\ &&&&\\ 
%\sigma_1 & 2 (t + s)  & 0 &  - 2 r &  0  \\ &&&&\\
%r &  0 & - 2r &- 2 (\sigma_2 - \sigma_1)  &   2r\\&&&&\\
%\sigma_2 & 0 & 0  &2r & 0\\ &&&&\\

%\end{pmatrix} |_{Cr}  \;  =  \begin{pmatrix} & t & \sigma_1&  r &\sigma_2 \\ &&&&\\
%t & \frac{1}{s}  & 2 s & 0  & 0 \\ &&&&\\ 
%\sigma_1 & 2 s  & 0 &  - 2 s &  0  \\ &&&&\\
%r &  0 & - 2 s & 0  &   2s\\&&&&\\
%\sigma_2 & 0 & 0  &2s & 0\\ &&&&\\

%\end{pmatrix} $$

\begin{lemma} \label{detLEM}
The phase  has  non-degenerate critical points 
$$t =0, r = s, \sigma_2 = \sigma_1 = \frac{1}{2 s}, $$
 for $s\not=0$ in the variables $t,r, \sigma_1, \sigma_2$.  The determinant and signature of the  Hessian are given by,
\begin{itemize}
\item 
 $\det \wt \hcal_2 = (2s)^4$, hence,  $\det^{-\half} = (2s)^{-2}$. 
 \item $\rm{sgn} \wt \hcal_2 =0$
 \end{itemize}
\end{lemma}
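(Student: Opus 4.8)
The plan is a direct critical-point and Hessian computation for the phase $\Psi = -t + \sigma_1[(t+s)^2 - r^2] - \sigma_2[s^2-r^2]$ of \eqref{phaseDEF}, with $q$ and $s$ treated as parameters and $(t,r,\sigma_1,\sigma_2)$ the integration variables of \eqref{IDEF} in the regime $t+s>0,\ s>0,\ r>0$. First I would write out the four equations $\partial_t\Psi=\partial_r\Psi=\partial_{\sigma_1}\Psi=\partial_{\sigma_2}\Psi=0$. The $\sigma_2$-equation is $r^2-s^2=0$, which forces $r=s$ in this regime; the $\sigma_1$-equation is then $(t+s)^2-r^2=0$, forcing $t+s=s$ (the root $t+s=-s$ being excluded by $t+s>0$), i.e. $t=0$; the $r$-equation reads $2r(\sigma_2-\sigma_1)=0$, so $\sigma_1=\sigma_2$ since $r=s\neq 0$; and the $t$-equation $2\sigma_1(t+s)-1=0$ gives $\sigma_1=\tfrac1{2s}$. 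This produces the asserted critical point and shows it is the only one in the chosen regime (the complementary regime $t+s<0,\ s<0$ being handled identically up to signs, as promised in the surrounding text).

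Next I would assemble the Hessian $\wt\hcal_2$ in the ordering $(t,r,\sigma_1,\sigma_2)$ and evaluate it at the critical point. Since $\Psi$ is affine in $(\sigma_1,\sigma_2)$, the lower-right $2\times2$ block vanishes, and a short computation gives the anti-triangular block form
$$
\wt\hcal_2=\begin{pmatrix} A & B \\ B^T & 0\end{pmatrix},\qquad
A=\begin{pmatrix} 1/s & 0 \\ 0 & 0\end{pmatrix},\qquad
B=\begin{pmatrix} 2s & 0 \\ -2s & 2s\end{pmatrix},
$$
where $A=D^2_{(t,r)}\Psi$ and $B=D^2_{(t,r),(\sigma_1,\sigma_2)}\Psi$ at the critical point. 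Since $\det B=4s^2\neq 0$ for $s\neq 0$, the phase is non-degenerate. I would then invoke the standard identity $\det\!\begin{pmatrix} A & B \\ B^T & 0\end{pmatrix}=(\det B)^2$ for $2\times2$ blocks (the permutation sign $(-1)^{n^2}$ being $+1$ when $n=2$), which yields $\det\wt\hcal_2=(4s^2)^2=(2s)^4$ and hence $|\det\wt\hcal_2|^{-\half}=(2s)^{-2}$.

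For the signature I would use the congruence $P\,\wt\hcal_2\,P^{T}=\begin{pmatrix} 0 & B \\ B^T & 0\end{pmatrix}$ with $P=\left(\begin{smallmatrix} I & -\tfrac12 A(B^T)^{-1} \\ 0 & I\end{smallmatrix}\right)$, which cancels the $A$-block while leaving the off-diagonal blocks unchanged; since $B$ is invertible, $\begin{pmatrix} 0 & B \\ B^T & 0\end{pmatrix}$ has two positive and two negative eigenvalues (the $\pm$ singular values of $B$), so $\rm{sgn}\,\wt\hcal_2=0$. Equivalently, this is formula \eqref{SIG2} with the upper-left block contributing nothing to the inertia. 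I do not expect a genuine obstacle here: the only thing requiring care is the bookkeeping of which $\pm$ branch of the square roots is selected by the regime $t+s>0,\ s>0$, done consistently with the companion case $t+s<0,\ s<0$, together with the routine check that in the excluded sign combinations the stationary set lies outside the range of integration so those regimes contribute only $O(\lambda^{-\infty})$.
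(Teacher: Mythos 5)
Your computation is correct: the four critical-point equations, the restriction to the regime $t+s>0,\ s>0$, the Hessian entries, $\det\wt\hcal_2=(2s)^4$, and $\operatorname{sgn}\wt\hcal_2=0$ all check out, and this is exactly the elementary verification the paper declines to write out ("we leave the calculations \dots to the reader"). The only difference from the route the paper has in mind is organizational: the paper's intended bookkeeping pairs the variables as $(t,\sigma_1)$ and $(r,\sigma_2)$, so that the Hessian has blocks $A=\begin{pmatrix}1/s & 2s\\ 2s & 0\end{pmatrix}$, $D=\begin{pmatrix}0 & 2s\\ 2s & 0\end{pmatrix}$, and it reads off the signature from the block-inverse formula \eqref{SIG2} (each block having signature $0$), whereas you pair $(t,r)$ against $(\sigma_1,\sigma_2)$, exploit that the phase is affine in $(\sigma_1,\sigma_2)$ to get the anti-triangular form $\begin{pmatrix}A & B\\ B^T & 0\end{pmatrix}$, compute the determinant as $(\det B)^2$, and obtain the signature by the congruence $P\,\wt\hcal_2\,P^T=\begin{pmatrix}0 & B\\ B^T & 0\end{pmatrix}$. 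Your grouping is arguably cleaner (no Schur-complement inversion is needed, and the $\pm$ pairing of eigenvalues of the anti-diagonal block matrix gives the signature immediately), while the paper's grouping plugs directly into the formula \eqref{SIG2} it already quotes; both give the same determinant, signature, and hence the same stationary-phase factor $(2s)^{-2}$ that cancels against the amplitude factor $(s+t)s|_{t=0}$.
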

The calculations are straightforward; since our only purpose is to calculate the amplitude, in the interest of brevity, we leave the calculations
in Lemma \ref{detLEM}  to the reader.

We then apply stationary phase to \eqref{IDEFSP}  in the variables $(t, \sigma_1, r, \sigma_2)$ to obtain the
complete asymptotic expansion claimed in the Proposition.  Using Lemma \ref{detLEM},  and cancelling the determinant factor of $s^{-2}$ with the Hadamard parametrix factors $(s + t) s |_{t=0}$, we obtain for $s \not= 0$, and for a dimensional constant $C_{n,d}$, the leading term in the asymptotic expansion has the form, for $s > 0$,
\begin{equation} \label{IDEFSP}\begin{array}{l}
I^+_{\rho}(q, s, \omega, \lambda) \simeq C_{n,d}^+ \lambda^{-2} \hat{\rho}(0) s^{d-1} 
s^{- \frac{n-1}{2}}   s^{-\frac{d-1}{2}} 
\wt{A}(s,  \frac{\lambda }{2s}, \frac{\lambda}{2s}). 
\end{array}  \end{equation}
We further integrate over $S^* H$ to obtain the final result.  Taking
into account the regularization of $\sigma_+^s$, the leading term with $s + t \geq 0, s \geq 0$  takes the form,
\begin{equation} \label{N1SP2} \begin{array}{l}
N^{1 +}_{\psi, \rho, H}(\lambda) \simeq    C_{n,d}^+ \lambda^{\frac{n +d}{2}+1}  \lambda^{-2} \hat{\rho}(0) \int_{\R} \hat{\psi} (s) s^{d-1} 
s_+^{- \frac{n-1}{2}}   s_+^{-\frac{d-1}{2}}   \left(  \int_{S^* H} 
\wt{A}( s,  \frac{\lambda }{2s}, \frac{\lambda}{2s}) dV_H(q) d S(\omega) \right)ds,  \end{array} \end{equation}
or more precisely, replacing $\wt{A}$ by its principal term \eqref{A0},
$$
\lambda^{\frac{n +d}{2}-1} C^+_{n,d}  \hat{\rho}(0) \int_{\R} \hat{\psi} (s) 
s_+^{- \frac{n-d}{2}}  \left(  \int_{S^* H} 
\Theta^{-\half}_M(q, \exp_q s \omega) \Theta^{-\half}_H(q, \exp_q s \omega)   dV_H(q) d S(\omega) \right) ds.  $$

\end{proof}

%\subsection{Stationary phase for $t + s < 0, s < 0$}

 % To find the  critical point(s) for $s < 0$ and for
%$t=0$ as well, we change variables to
The stationary phase expansion for $t + s < 0, s < 0$ is the complex conjugate of that for $t + s > 0, s> 0$, as one 
sees by changing varibles  $s = - S, t = - T$ with $S, T > 0$. Hence the amplitude is the same. 
%The contribution from points with $s + t < 0, s < 0$ is then given by, 
%$$\begin{array}{l} \int \int \hat{\rho}(-T) \hat{\psi}(-S) e^{i \lambda T} U_M(- (T+S), q,q') U_H(S, q, q') dV_H(q) dV_H(q') dS dT\\ \\ 
%= \int \int \hat{\rho}(-T) \hat{\psi}(-S) e^{i \lambda T} \overline{U_M( (T+S), q,q')}  \overline{U_H(-S, q, q') } dV_H(q) dV_H(q')dS dT,\end{array}$$
%with phase
%$$\Psi_- = T - \sigma_1 ((T+ S)^2 - r^2)) + \sigma_2 (s^2 - r^2). $$
%At the critical point $T=0$, we  obtain the complex conjugate of \eqref{N1SP2} with $\hat{\psi}(s)$ replaced by $\hat{\psi}(-s)$ and with $ C^+_{n,d} $
%replaced by $\overline{ C^+_{n,d}}  = C^-_{n,d}$.
%Evidently, $\Psi_- = - \Psi$ (see \eqref{phaseDEF}), and the solutions of the critical point equations
%are therefore, $\sigma_j = \frac{1}{2 S}, r = S, T =0$. Assuming that $\psi$ is real and even, the  contribution from these critical points is therefore, 
%$$C_{m,d}^- \int_0^{\infty} \hat{\psi}(-S) \left( e^{\frac{i \pi}{4} \rm{sgn} H_{-}} \frac{1}{\sqrt{|\det H_{-}}|} \acal_{-} 
%S^{- \frac{d-1}{2}} S^{- \frac{n-1}{2}} S^{d-1} \right) dS  |_{r = \pm s, \sigma = \frac{1}{2 |s|}},$$
%where $\frac{1}{\sqrt{|\det H_{-}}|}  = S^{-2}$. As above, $\rm{sig} H_-  =0$ at the critical point. Hence, to leading order  the integral equals,
%$$C_{m,d}^- \int_0^{\infty} \hat{\psi}(-S)  \acal_0(S) 
%S^{- \frac{n-d}{2}}  S^{-2}  dS. $$

There are no critical points when  $t + s > 0, s < 0$, resp.
 $t + s < 0, s > 0$. 
This is because, by Proposition \ref{M0PROP}, the only $(s,t)$ for which there exist points in the canonical
relation contributing to the singularities of $S(t, \psi)$ are those 
for which there exist solutions of  \eqref{GEO1}, i.e. for which there exist $\xi \in T^*H$ such that 
$  G_H^{-s} \pi_H G_M^{t + s} (q, \xi) = \pi_H(q, \xi) $. This forces $t=0$ and then $s > 0$ and $s < 0$ are incompatible.

Thus, we have proved that the amplitude in \eqref{acHpsi1}  is as claimed in Theorem \ref{main 1}.

\section{\label{LONGtSINGS} Singularities of $S(t, \psi)$ for long times}

To prove the last statement of Theorem \ref{main 1} we will need a generalization of Theorem \ref{main 2}   on the asymptotics of $N_{\psi, \rho,H}^1(\lambda)$ to the case where $\rm{supp} \hat{\rho}$ is an arbitrarily long interval. We  assume as before 
that the solution set of \eqref{GEO1} is clean. The statement and proof are analogous to \cite[Proposition 1.20]{WXZ21},
and only involve the wave front analysis in Section \ref{WFSECT}. We only sketch the main points and refer to the
discussion of the $c < 1$ case in \cite{WXZ21} for further details.

\begin{proposition} \label{MORESINGS} 
  Let $\rho \in \scal(\R)$ with $\hat{\rho} \in C_0^{\infty}$ and with $0 \notin {\rm supp} \hat{\rho}$. Assume that  the fixed point set of $G_H^S$ at a period $S \in \Sigma^1$  is clean, and denote by  $d_j$ the dimension  of a component $Z_j(T)$ of the fixed point set.   Then, there exists $\beta_j \in \R$ and a complete asymptotic expansion,
$$  N^{c} _{\rho, \psi, H  }(\lambda) \sim \lambda^{-1 + \half (n -d) }  \sum_{T \in \Sigma^1} \sum_{\ell=0}^{\infty} \beta_{\ell} (t -T) \; \lambda^{\frac{d_j(T) }{2} -\ell},$$
The asymptotics corresponding to $T$ are  of lower order than the principal term of Theorem \ref{main 2}  unless $G_H^T = id$.\end{proposition}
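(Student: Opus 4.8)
The plan is to run the argument of \cite[Proposition 1.20]{WXZ21} in the present setting, substituting the order computation of Lemma \ref{ORDERLEM} for the one used there and carrying along the blow-down singularity at $s=0$ exactly as in the short-time analysis of Sections \ref{SPSECT} and \ref{MODELSECT}. One starts from $N^{1}_{\psi,\rho,H}(\lambda)=\int_{\R}\hat\rho(t)\,e^{it\lambda}\,S(t,\psi)\,dt$ and from the fact, recorded in Section \ref{WFSECT} (Propositions \ref{MAINFIOPROP} and \ref{M0PROP}), that microlocally near any $t_{0}$ the distribution $S(t,\psi)$ is a Lagrangian distribution whose wave front set projects into $\Sigma^{1}=\{t:\operatorname{Fix}(G^{t}_{H})\neq\emptyset\}$. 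Since $\hat\rho\in C_{0}^{\infty}$, its support meets only finitely many periods $T_{1},\dots,T_{N}\in\Sigma^{1}$; inserting a partition of unity subordinate to a cover of $\operatorname{supp}\hat\rho$ by a small neighbourhood of each $T_{m}$ together with an open set avoiding $\Sigma^{1}$, and using that $S(t,\psi)$ is $C^{\infty}$ off $\bigcup_{m}\{T_{m}\}$, we split $N^{1}_{\psi,\rho,H}(\lambda)$ into an $O(\lambda^{-\infty})$ term and a finite sum of contributions, one for each $T_{m}$.

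Fix $T=T_{m}$ and a cutoff $\hat\rho_{T}$ equal to $1$ near $T$ and supported so close to $T$ that no other period of $G^{t}_{H}$ lies in $\operatorname{supp}\hat\rho_{T}$. On this support the governing geometry is that of Proposition \ref{M0PROP}: the relevant critical set consists of $(t,s,q,\eta)$ with $s\in\operatorname{supp}\hat\psi$ and $G^{t}(q,\eta)=(q,\eta)$, and under the hypothesis that $\operatorname{Fix}(G^{T}_{H})$ is clean this critical set is, on each component $Z_{j}(T)$ of dimension $d_{j}(T)$, cut out cleanly in the $(q,\eta,t)$ variables. As at $t=0$, the only obstruction to cleanness is the degeneracy in the $s$-direction at $s=0$, caused by the collapse of the distance spheres of the $H$-factor along the diagonal (Lemma \ref{UNCLEAN2}(ii)). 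I would therefore handle the range $|s|\ge\delta$ by the stationary phase method in the $(q,\eta,t)$ variables, exactly as in \cite{DG75} and Section \ref{SPSECT}, and the range $|s|<\delta$ by the hybrid model-integral argument of Section \ref{MODELSECT}; the order of the resulting contribution of $Z_{j}(T)$ is then read off as in the proof of Theorem \ref{main 2}, with $\dim S^{*}H$ replaced by $d_{j}(T)$, giving the leading exponent $-1+\tfrac12(n-d)+\tfrac12 d_{j}(T)$ and the complete expansion asserted in the Proposition after summing over $j$ and over $m$.

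Finally, since $\operatorname{Fix}(G^{T}_{H})\subset S^{*}H$ and $\dim S^{*}H=2d-1$, every component satisfies $d_{j}(T)\le 2d-1$, and $d_{j}(T)=2d-1$ for some component forces $\operatorname{Fix}(G^{T}_{H})=S^{*}H$, i.e. $G^{T}_{H}=\operatorname{id}$; the blow-down enhancement that raises the order to $\tfrac{n+d}{2}-1$ is present precisely on such a full-dimensional component, exactly as at $t=0$. Comparing $-1+\tfrac12(n-d)+\tfrac12 d_{j}(T)$ (respectively its enhanced value when $Z_{j}(T)=S^{*}H$) with the order $\tfrac{n+d}{2}-1$ of the $t=0$ principal term of Theorem \ref{main 2} then shows that every period $T\neq 0$ with $G^{T}_{H}\neq\operatorname{id}$ contributes strictly lower order, which is the last assertion.

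The main obstacle is the interaction between the long-time clean-composition calculus and the blow-down at $s=0$: one must verify, uniformly over the finitely many $T$, that the model-integral reduction of Section \ref{MODELSECT} still applies near $s=0$ for $t$ near $T$ — here the $M$-arc has length $T+s\approx T$ and does not collapse, so the relevant model degenerates to a single rather than a double Bessel-type integral — and that cleanness of $\operatorname{Fix}(G^{T}_{H})$ is the only additional hypothesis one needs, i.e. that the intermediate wave front relations of Section \ref{WFSECT} are the graphs asserted there even once $\operatorname{supp}\hat\rho$ is large; this last point is where conjugate points along $H$-arcs would intervene, and is precisely why the assumption $\operatorname{supp}\hat\psi\subset(-r_{0},r_{0})$ is retained throughout.
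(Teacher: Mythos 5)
Your proposal is correct and follows essentially the same route as the paper: the paper's own proof is just a brief sketch in the style of \cite{DG75} and \cite[Proposition 1.20]{WXZ21}, treating the (isolated, Lagrangian) singularities of $S(t,\psi)$ one period $T\in\Sigma^1$ at a time and writing the local homogeneous expansion whose order on each clean component $Z_j(T)$ is determined by $d_j(T)$, exactly as you do after localizing $\hat\rho$ near each period. Your additional care with the $s=0$ blow-down for $t$ near $T\neq 0$ (via the model-integral reduction) is a refinement the paper leaves implicit, and it is consistent with its argument.
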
 

\begin{proof} We follow \cite{DG75, WXZ21}. 
The singularities of $S(t, \psi)$ are isolated and Lagrangian and we treat them one at a time. For $t$ sufficiently close to $T$,  $$S(t, \psi) = 
\sum_j \beta_j(t - T), $$ where $\beta_j$ is a homogeneous Lagrangian distribution given by,
$$
\beta_j(t) = \int_{\R} \alpha_j(s) e^{- i s t} ds, \;\; {\rm with}\;\; \alpha_j(s) \sim (\frac{s}{2 \pi i})^{ -1 + \half (n -d)+\frac{d_j(T)}{2}}\;\; i^{- \sigma_j} \sum_{k=0}^{\infty} \alpha_{j,k} s^{-k},$$
where $d_j(T) $ is the dimension of the component $Z_j(T)$.  
\end{proof}

\section{Tauberian theorems and proofs of Theorem \ref{main 1} and Corollary \ref{theo JUMP}}In the next section, we  use the following Tauberian theorems. 
Let  $\rho$ be a nonnegative Schwartz-class function on $\R$ with compact Fourier support let  $N$  be a tempered, monotone non-decreasing function with $N(\lambda) = 0$ for $\lambda < 0$, and $N'$ its distributional derivative as a nonnegative measure on $\R$.

\begin{proposition}[Corollary B.2.2 in \cite{SV}] \label{tauberian 1} Let $\rho \in \scal(\R)$ be a positive, even  
test function with $\hat{\rho}(0) = 1$ and $\hat{\rho} \in C_0^{\infty}(\R)$. Let $N(\lambda)$ be a monotone non-decreasing temperate function. 
	Fix $\nu \geq 0$. If $N' * \rho(\lambda) = O(\lambda^\nu)$, then
	\[
		N(\lambda) = (N * \rho)(\lambda) + O(\lambda^\nu).
	\]
	This estimate holds uniformly for a set of such $N$ provided $N' * \rho(\lambda) = O(\lambda^\nu)$ holds uniformly.
\end{proposition}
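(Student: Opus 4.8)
The plan is to run the classical monotonicity Tauberian argument (Levitan--H\"ormander, as in \cite{SV, HoIV}), which upgrades the smoothed bound on $N'\ast\rho$ to a pointwise bound on $N-N\ast\rho$ by exploiting that $dN\ge 0$ and $\rho\ge 0$. We may assume $\lambda$ is large, the bounded range being trivial.

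\emph{Step 1: a local window bound.} Since $\rho$ is positive, even and continuous, there are $\delta,c>0$ with $\rho\ge c$ on $[-\delta,\delta]$. Because $dN$ is a nonnegative measure, $(N'\ast\rho)(\mu)=\int_{\R}\rho(\mu-x)\,dN(x)\ge c\bigl(N(\mu+\delta)-N(\mu-\delta)\bigr)$. Hence the hypothesis $(N'\ast\rho)(\mu)=O(\mu^{\nu})$ (which in fact decays rapidly as $\mu\to-\infty$, since $dN$ is supported in $[0,\infty)$) gives $N(\mu+\delta)-N(\mu-\delta)\le C\langle\mu\rangle^{\nu}$ for all $\mu$. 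Subdividing an arbitrary interval $[a,b]$ into $O((b-a)/\delta)$ windows of length $2\delta$ and summing yields the ``almost Lipschitz'' estimate
$$0\le N(b)-N(a)\le C\,(1+b-a)\,\langle b\rangle^{\nu}\qquad(a\le b),$$
and, taking $a=-1$, the a priori polynomial bound $N(\lambda)=O(\langle\lambda\rangle^{\nu+1})$. All constants so far depend only on $\delta$, $c$ and the constant in $(N'\ast\rho)(\mu)=O(\mu^{\nu})$, which is exactly what produces the uniformity clause.

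\emph{Step 2: from smoothed to sharp.} Using $\int_{\R}\rho=\hat\rho(0)=1$, write
$$N(\lambda)-(N\ast\rho)(\lambda)=\int_{\R}\bigl(N(\lambda)-N(\lambda-t)\bigr)\rho(t)\,dt,$$
and split the $t$-integral into $|t|\le 1$, $1<|t|\le\lambda/2$, and $|t|>\lambda/2$. On $|t|\le 1$, Step 1 gives $|N(\lambda)-N(\lambda-t)|\le N(\lambda+1)-N(\lambda-1)=O(\langle\lambda\rangle^{\nu})$, and $\int_{|t|\le 1}|\rho|<\infty$. On $1<|t|\le\lambda/2$, Step 1 gives $|N(\lambda)-N(\lambda-t)|\le C(1+|t|)\langle\lambda\rangle^{\nu}$, while $\int_{\R}(1+|t|)\,|\rho(t)|\,dt<\infty$ since $\rho\in\scal(\R)$; this contributes $O(\langle\lambda\rangle^{\nu})$. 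On $|t|>\lambda/2$, the crude bound $|N(\lambda)-N(\lambda-t)|=O(\langle t\rangle^{\nu+1})$ from Step 1 together with the Schwartz decay $|\rho(t)|=O_k(\langle t\rangle^{-k})$ for any $k>\nu+2$ makes $\int_{|t|>\lambda/2}\langle t\rangle^{\nu+1-k}\,dt=O(\langle\lambda\rangle^{\nu+2-k})$ negligible. Summing the three pieces gives $N(\lambda)-(N\ast\rho)(\lambda)=O(\langle\lambda\rangle^{\nu})$, with constants depending only on the data listed above, proving both the estimate and its uniform version.

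\emph{Main obstacle.} The only genuinely nontrivial point is Step 1: passing from a one-sided bound on the smoothed measure to two-sided control of the increments of $N$. This is precisely the Tauberian mechanism, and it relies essentially on $\rho\ge 0$, so that $N'\ast\rho$ dominates $dN$ on short intervals; it would fail for a general, non-sign-definite weight. Step 2 involves no cancellation or oscillation --- only the polynomial growth of $N$ played against the rapid decay of $\rho$ --- and is pure bookkeeping.
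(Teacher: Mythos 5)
Your argument is correct and is precisely the standard Levitan--H\"ormander monotonicity Tauberian proof that underlies the cited result: the paper gives no proof of Proposition \ref{tauberian 1}, quoting it directly from \cite[Corollary B.2.2]{SV}, whose proof runs along the same lines (positivity of $\rho$ on an interval plus monotonicity of $N$ gives the window bound, and the Schwartz decay of $\rho$ then converts it into the sharp estimate). The only minor points to keep in order are that the window estimate $N(b)-N(a)\le C(1+b-a)\langle b\rangle^{\nu}$ should be invoked with $a\ge -1$ (harmless, since $N$ vanishes on the negative axis, and needed because window centers near a very negative $a$ would otherwise spoil the bound), and that the uniformity clause presupposes the hypothesis $N'\ast\rho(\lambda)=O(\lambda^{\nu})$ with a constant uniform over the family and over $\lambda$, which is exactly what your constants depend on.
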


The next one is  \cite[Theorem B.5.1]{SV} with  $\nu = \frac{n+d}{2} - 1$.

\begin{proposition}\label{tauberian 2}
Let $\rho \in \scal(\R)$ be a positive, even  
test function with $\hat{\rho}(0) = 1$ and $\hat{\rho} \in C_0^{\infty}(\R)$. Let $N(\lambda)$ be a monotone non-decreasing temperate function.  If $N' * \rho(\lambda) = O(\lambda^{\frac{n+d}{2} - 1})$ and additionally
	\[
		N' * \chi(\lambda) = o(\lambda^{\frac{n+d}{2} - 1})
	\]
	for every Schwartz-class $\chi$ on $\R$ whose Fourier support is contained in a compact subset of $(0,\infty)$. Then,
	\[
		N(\lambda) = N * \rho(\lambda) + o(\lambda^{\frac{n+d}{2} - 1}).
	\]
\end{proposition}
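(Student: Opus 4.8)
\emph{Proof proposal.} This is a Tauberian theorem of precisely the form of \cite[Theorem~B.5.1]{SV}, valid there for any tempered monotone non-decreasing $N$ vanishing on $(-\infty,0)$ and any $\nu\ge 0$; with $\nu=\frac{n+d}{2}-1$ it follows at once, so the quickest course is to cite that reference. I outline the direct argument. Put $e(\lambda):=N(\lambda)-(N*\rho)(\lambda)$; the goal is $e(\lambda)=o(\lambda^{\nu})$. By Proposition~\ref{tauberian 1} applied to $\rho$ one already has $e(\lambda)=O(\lambda^{\nu})$; and since $\rho\ge 0$ is Schwartz with $\int_{\R}\rho=\hat\rho(0)=1$, it is bounded below by a positive constant on some fixed interval, so translating that interval upgrades the hypothesis $N'*\rho(\lambda)=O(\lambda^{\nu})$ to the uniform local mass bound $N'\bigl([\mu-1,\mu+1]\bigr)=O(\langle\mu\rangle^{\nu})$, hence $N'*\chi(\lambda)=O(\langle\lambda\rangle^{\nu})$ for \emph{every} $\chi\in\scal(\R)$ with $\hat\chi\in C_0^{\infty}(\R)$.

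It remains to improve $O$ to $o$. Write $e'=dN'-(N'*\rho)\,d\lambda$, whose Fourier transform relative to $\widehat{N'}$ is the multiplier $1-\hat\rho(\tau)$; since $\rho$ is even with $\hat\rho(0)=1$, this vanishes to second order at $\tau=0$ and is $\equiv 1$ for $|\tau|$ large. Resolving $e$ in frequency: (a) the slowly-varying part of $N'$ (including its leading Weyl-type density of order $\mu^{\nu}$) contributes only $O(\lambda^{\nu-1})=o(\lambda^{\nu})$, because the second-order zero of $1-\hat\rho$ at the origin suppresses it; (b) the part with Fourier content in a bounded band $0<a\le|\tau|\le b$ reduces, using that $dN'$ is real and positive to pass from $\{a\le|\tau|\le b\}$ to $(0,\infty)$, to a finite sum of terms $N'*\chi(\lambda)$ with $\hat\chi$ in a compact subset of $(0,\infty)$, each $o(\lambda^{\nu})$ by the second hypothesis; (c) the high-frequency tail $|\tau|>b$ is $o(\lambda^{\nu})$ for $b$ large, by the local mass bound above and the rapid decay of the Schwartz class. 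Summing these, $e(\lambda)=o(\lambda^{\nu})$.

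The step I expect to be the main obstacle is the interplay of (a) and (b): proving, for a \emph{general} monotone $N$, that the only ways $e$ could retain order-$\lambda^{\nu}$ behaviour are a slowly-varying piece (excluded by the second-order zero of $1-\hat\rho$) or genuine oscillation/jumps at positive frequencies (excluded by the second hypothesis). This is exactly where the monotonicity of $N$ is essential — through the local mass bound and the cancellation in the identity $e(\lambda)=\int_{0}^{\infty}\bigl(N'((\lambda-t,\lambda])-N'((\lambda,\lambda+t])\bigr)\rho(t)\,dt$ — and where one must use the \emph{full} strength of the second hypothesis, that $N'*\chi(\lambda)=o(\lambda^{\nu})$ for \emph{all} $\chi$ with $\hat\chi$ in a compact subset of $(0,\infty)$. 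In the application $N=N^{1}_{\psi,H}$ this is the vanishing of the second-term oscillation $Q_H(\lambda)$ of Section~\ref{2TERMINTRO}, supplied for aperiodic $G^{s}_H$ by the wave-front analysis of Section~\ref{WFSECT}; for the general-$N$ bookkeeping I would defer to \cite[Appendix~B]{SV}.
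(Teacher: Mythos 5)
Your first sentence is exactly what the paper does: Proposition \ref{tauberian 2} is stated there with no proof beyond the citation of \cite[Theorem B.5.1]{SV} with $\nu = \frac{n+d}{2}-1$, so citing that result is the intended argument and suffices. Your supplementary frequency-decomposition sketch is a reasonable outline of how the reference proves it, but it is not needed and is not what the paper supplies.
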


%Next, we recall   \cite[Theorem B.3.1]{SV}.

%\begin{proposition}\label{tauberian 3}if $d N * \rho(\lambda) = O(\lambda^{\nu})$ then for any $\hat{\gamma}
%\in C_0^{\infty}(\R), $  $d N * \gamma(\lambda) = O(\lambda^{\nu})$ and
%$$\limsup_{\lambda \to \infty} \lambda^{-\nu}  d N * \gamma(\lambda) \leq C\;
%\limsup_{\lambda \to \infty} \lambda^{-\nu}  d N * \rho (\lambda).$$
%Here $C = C_{\rho, \gamma}$ is independent of $N, \nu$.
%\end{proposition}

Last, we recall \cite[Theorem 29.1.5-Corollary 29.1.6]{HoIV} in a form stated by Ivrii \cite{I80}. \begin{proposition}\label{tauberian 4} Let $\beta \in C_0^{\infty}(\R), \beta \equiv 1$ in $(-\half, \half)$, $\beta = 0$ for $|t| \geq 1$. Let
$\beta_T(t) = \beta(t/T)$. Let $N(\lambda)$ be a non-decreasing function such that $N(\lambda) \leq C \lambda^d$. 
Suppose that
$$\int_0^{\infty} \hat{\beta}_T(\lambda - \mu) d N(\mu) = a_0 d \lambda^{d-1} + a_1(d-1) \lambda^{d-2} +
o(\lambda^{d-2}). $$
Then,
$$|N(\lambda) - a_0 \lambda^d + a_1 \lambda^{d-1} | \leq C \frac{a_0}{T} \lambda^{d-1} + o(\lambda^{d-1}). $$
\end{proposition}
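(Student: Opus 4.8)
This is the classical sharp two--term Tauberian theorem --- it is essentially \cite[Theorem 29.1.5--Corollary 29.1.6]{HoIV} in the packaging of Ivrii \cite{I80} --- and the plan is to reproduce its (by now standard) proof. Write $\rho:=\widehat{\beta_T}$; since $\beta_T(t)=\beta(t/T)$ one has $\rho(\nu)=T\hat\beta(T\nu)$, so $\rho$ is real, even, Schwartz, concentrated near $0$ at scale $T^{-1}$, with $\int_{\R}\rho$ a fixed positive constant which after a harmless renormalization we take to be $1$, and with $\int_{\R}|\nu|\,|\rho(\nu)|\,d\nu=O(T^{-1})$ while $\int_{\R}|\rho|=O(1)$. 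Let $P(\lambda)$ denote the antiderivative (vanishing at $0$) of the right--hand side of the hypothesis, i.e.\ $P(\lambda)=a_0\lambda^d\pm a_1\lambda^{d-1}$ with the sign dictated by the hypothesis; the claim is $|N(\lambda)-P(\lambda)|\le \frac{Ca_0}{T}\lambda^{d-1}+o(\lambda^{d-1})$. The first step is purely formal: because $\frac{d}{d\lambda}(N*\rho)(\lambda)=(dN*\rho)(\lambda)$ (integration by parts, using $N(\mu)=0$ for $\mu<0$ and $N$ polynomially bounded), integrating the hypothesis from $0$ to $\lambda$ gives the smoothed two--term asymptotics $(N*\rho)(\lambda)=P(\lambda)+o(\lambda^{d-1})$.

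The heart of the matter is the \emph{de--smoothing} estimate $|N(\lambda)-(N*\rho)(\lambda)|\le \frac{Ca_0}{T}\lambda^{d-1}+o(\lambda^{d-1})$. To obtain it, introduce a nonnegative Fej\'er--type function $\psi\ge0$ with $\hat\psi\ge 0$, $\hat\psi$ supported in $(-\tfrac12,\tfrac12)$, and $\psi(\mu)\ge c>0$ for $|\mu|\le 1$; for $h\in[T^{-1},1]$ set $\psi^{(h)}(\mu)=h^{-1}\psi(\mu/h)$, so that $\widehat{\psi^{(h)}}$ is supported in $(-\tfrac1{2h},\tfrac1{2h})$, which lies inside the set where $\beta_T\equiv 1$, whence the identity $\psi^{(h)}=\psi^{(h)}*\rho$. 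Convolving this with $dN$ and inserting the hypothesis, one finds --- expanding $(\lambda-\nu)^{d-1}$ and using that $\psi^{(h)}$ has moments $O(h^k)$ and that the $o(\cdot)$ term contributes uniformly --- that $(dN*\psi^{(h)})(\lambda)=a_0\,d\,m_\psi\,\lambda^{d-1}+o(\lambda^{d-1})$ uniformly in $h\in[T^{-1},1]$, where $m_\psi=\int\psi$. Positivity of $\psi^{(h)}$ then yields the refined increment bound $N(\lambda+\tfrac h2)-N(\lambda-\tfrac h2)\le C_\ast a_0\,h\,\lambda^{d-1}+o(\lambda^{d-1})$, uniformly in $h\in[T^{-1},1]$; taking $h=1$ and combining with $N\le C\lambda^d$ controls the increments of $N$ on all larger scales as well. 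Finally, write $N(\lambda)-(N*\rho)(\lambda)=-\int_{\R}\bigl(N(\lambda-\nu)-N(\lambda)\bigr)\rho(\nu)\,d\nu$ and split the integral into $|\nu|\le\tfrac12$, $\tfrac12<|\nu|\le\tfrac\lambda2$, and $|\nu|>\tfrac\lambda2$: the first range is controlled by the refined increment bound with $h\sim |\nu|+T^{-1}$, giving $\lesssim a_0\lambda^{d-1}\bigl(\int|\nu||\rho|+T^{-1}\int|\rho|\bigr)+o(\lambda^{d-1})=O(a_0 T^{-1}\lambda^{d-1})+o(\lambda^{d-1})$, while the last two ranges are $o(T^{-1}\lambda^{d-1})$ by the rapid decay of $\rho$ together with the increment and growth bounds. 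Adding the de--smoothing estimate to the smoothed asymptotics of the first step gives the theorem.

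The step I expect to be the main obstacle is precisely obtaining the \emph{sharp} constant $Ca_0/T$ rather than a mere $O(\lambda^{d-1})$: a crude increment bound $N(\lambda+h)-N(\lambda)=O(\lambda^{d-1})$ would only yield $|N-P|=O(\lambda^{d-1})$, because $\int|\rho|=O(1)$ is not small. One must exploit that $dN*\rho$ has \emph{density} $\sim a_0 d\lambda^{d-1}$ to leading order, so that the mass of $dN$ in a window of length $h\ge T^{-1}$ is $\lesssim a_0 h\lambda^{d-1}$ up to lower--order terms that are uniform in $h$; this is what forces one to feed the full two--term hypothesis --- not just its $O(\lambda^{d-1})$ consequence --- through the auxiliary function $\psi^{(h)}$, and to track that the resulting error stays uniform over the whole range $h\in[T^{-1},1]$. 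Once this uniform increment bound is established, the concluding integration against $\rho$ uses nothing beyond $\rho$ being Schwartz and concentrated at scale $T^{-1}$.
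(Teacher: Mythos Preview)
The paper does not prove this proposition at all: it is stated as a citation of \cite[Theorem 29.1.5--Corollary 29.1.6]{HoIV} in the form given by Ivrii \cite{I80}, with no argument supplied. Your sketch is a faithful and correct outline of the standard proof of that result---smoothed asymptotics by integration, then de-smoothing via the refined increment bound obtained from an auxiliary $\psi^{(h)}$ with $\widehat{\psi^{(h)}}$ supported where $\beta_T\equiv 1$ so that $\psi^{(h)}=\psi^{(h)}*\rho$. Your closing paragraph correctly identifies the one genuinely delicate point (uniformity of the increment bound in $h\in[T^{-1},1]$, which is what produces the sharp $a_0/T$ rather than a bare $O(1)$).
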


\subsection{ \label{SS Tauberian} Completion of the proof of Theorem \ref{main 1} }

Except for the last statement, 
Theorem \ref{main 1} follows from Theorem \ref{main 2} and a standard cosine Tauberian theorem:
Except for the last statement on aperiodic manifolds, the remainder of the proof of Theorem \ref{main 1} is similar to the end of the proof of Theorems 1.16 and 1.20 
of \cite{WXZ21}. We therefore sketch the overlapping proofs and refer to the earlier paper for complete details.

\begin{proof} 
Theorem \ref{main 1} pertains to the Weyl function $N^{1} _{\psi, H  }(\lambda)$ of  \eqref{cpsi}, which for convenience we repeat here,
$$N^{1} _{\psi, H  }(\lambda): = 
\sum_{j, k:  \lambda_k \leq \lambda}  \psi( \lambda_j - \mu_k)  \left| \int_{H} \phi_j \overline{e_k}dV_H \right|^2. $$
We assume with no essential loss of generality that $\psi \geq 0$. Then, $N^{c} _{\psi, H  }(\lambda)$ 
 is monotone non-decreasing and has Fourier transform $S(t, \psi)$ \eqref{St}.\eqref{SpsiDEF}

For $\psi \geq 0$, we apply Proposition \ref{tauberian 1} with $\hat{\rho}\; \cap\; {\rm singsupp}\; S(t, \psi) = \{0\}$ and  to $d N^{1} _{\psi, H  }(\lambda) $. By  Theorem \ref{main 2},  $\rho* d N^{1} _{\psi, H  }(\lambda)  = \beta_0 \;\lambda^{\frac{n+d}{2} -1} + O(\lambda^{\frac{n+d}{2} -2})$, and therefore,
$$\begin{array}{lll}  N^{1} _{\psi, H  }(\lambda) & = &  \rho* N^{1} _{\psi, H  }(\lambda) + O(\lambda^{\frac{n+d}{2} -2} ) \\ &&\\
& = & \beta_0 \lambda^{\frac{n+d}{2}} +  O(\lambda^{\frac{n+d}{2} -1} ),\end{array}  $$
where $\beta_0$ is the principal coefficient,
concluding the proof of Theorem \ref{main 1}.
\end{proof}

\subsection{\label{Aperiodic} Aperiodic case: Proof of the last statement of  Theorem \ref{main 1}. }

It remains to prove the last statement of Theorem \ref{main 1}, that 
if the geodesic flow $G^t_H$ of $H$ is aperiodic, then $$N_{\psi, H}^{1} (\lambda) =
C_{n,d} \;\;   a^0_1(H, \psi) \; \lambda^{\frac{n+d}{2}   } + R_{\psi, H}^{1} (\lambda), \;\; \rm{where}\; R_{\psi, H}^{1} (\lambda) =   o(\lambda^{\frac{n+d}{2} -1}).$$

In the case where the fixed point sets of $G_H^S$ at a period $S \in \Sigma^1$  are all clean, the last statement
follows immediately from Proposition \ref{MORESINGS} and Proposition \ref{tauberian 2}. Hence, the problem is to prove the same estimates
without assuming cleanliness.
 We only assume that the closed geodesics at all non-zero periods
forms a set of Liouville measure zero. 

\begin{rem} In principle, there could exist a second term of order $\lambda^{\frac{n+d}{2} -1}$. It requires a calculation 
to prove that it vanishes in the Kuznecov $c=1$ case (Section \ref{SUBPRINCIPAL}), as it did in the case $c < 1$ \cite{WXZ21}. \end{rem}

\begin{proof} In Theorem \ref{main 1}-Theorem \ref{main 2}, we have
already proved an  asymptotic result  when $\hat{\rho}$ only contains
$\{0\}$ among the singularities. 
To obtain the two-term Weyl law for longer times, we use a pseudo-differential cutoff argument generalizing the one for pointwise
Weyl asymptotics in   \cite[Theorem 29.1.5-Corollary 29.1.6]{HoIV}.

Let $\hat{B}_T, \hat{b}_T: =I - \hat{b}_T  \in \Psi^0(H)$ be zeroth order pseudo-differential operators on $H$  so that the support of the principal symbol  $b_T$ of $\hat{b}_T$  contains the union of
all closed geodesics of $H$ of period $\leq T$.  Let us briefly review
the construction of $B_T, b_T$ from \cite{HoIV}. First define the microlocal period function of $H$, \begin{equation}\label{M1}
L_H^*(q,\eta)=\inf \{t>0: \,G^t_H(q, \eta) =(q, \eta)  \},
\end{equation}
where $L^*$ is defined to be $+\infty$ if no such $t$ exists.  It is  homogeneous
of degree zero and lower semicontinuous.  Henceforth, we restrict it to $S^*H$.  The  set of periodic points of $G^t_H$ is the closed set defined by  \begin{equation} \pcal_H  =  \{(q,\eta) \in S^*H: \, 1/L_H^*(q,\eta)\ne 0\}.  \end{equation}
 If $T>1$
is a large parameter, then we can find a function $b_T\in C^\infty (S^*H, [0,1])$ so that
\begin{equation} \label{W4} \int_{S^*H }b_T(q, \eta) d \mu_L(q, \eta) \le 1/T^2, \end{equation}
and so that
$$ 1/L_H^*(q,\eta)\le 1/T, \quad \text{on} \, \, \text{supp } B_T\; (= \text{supp} (1 -b_T)).$$

%We assume that 
%\begin{equation}\label{W4}
%T^2\int_{|\xi| \le 1}|b_T(x,\xi)|^2 \, d\xi + \int_{|\xi| \le 1}|B_T(x,\xi)|^2 \, d\xi \le C,
%\end{equation}
%where $C$ is independent of the large
%parameter $T$.\edit{Above need revision. There are two W4 labels.}

 We then define $\hat{B}_T = Op_H(B_T)$ for a fixed choice
of quantization; similarly for $\hat{b}_T$ and
 use the partition of unity $I = \hat{B}_T + \hat{b}_T$ to introduce pseudo-differential  cutoffs on $L^2(H)$  to decompose the trace \eqref{S(t, psi)DEF}. There are several ways to introduce  cutoffs in the composition $\gamma_H U_M\gamma_H^* (t  + s, q, q') U_H(-s, q, q') $: (i) to introduce  $I = \hat{B}_T + \hat{b}_T$ only on the left and right  sides of 
$U_H(-s) $ (with adjoint on the right side) or only on the left and right sides of $\gamma_H U_M(t +s) \gamma_H^*$; (ii)  to introduce the partition of unity on both sides of both factors. 
%We note that
%\eqref{NpsirhoDEF} is the (distribution) trace inner product 
%\begin{equation} \label{S(s,t)DEFB}\int_{\R}  \hat{\psi}(s)  \ \langle   \gamma_H U_M\gamma_H^* (t  + s, q, q'),  U_H(-s, %q, q') \rangle_{H \times H} ds, \end{equation}
%where $\langle A_1(x, y), A_2(x, y) \rangle_{H \times H} = \rm{Tr} A_1 A^*_2. $
%hence (i) - (ii) are equivalent. Option (iii) is redundant because a cutoff on a single factor operates to cutoff the trace. 
It turns out that (ii) is a convenient choice in apply Proposition \ref{tauberian 2}. In terms of eigenfunction 
expansions, it corresponds to 
\begin{equation} \label{NpsirhoDEFB} N^{1} _{\psi, \rho, H  }(\lambda) = \sum_{j, k=0}^{\infty} \rho(\lambda - \lambda_j)  \psi(\lambda_j - \mu_k)   \left| \langle (B_T + b_T) \gamma_H \phi_j),\overline{(B_T + b_T)e_k} \rangle_{H} \right|^2, \end{equation}
where $\langle f, g \rangle_H = \int_H f \bar{g} dV_H$. The goal is to use Proposition \ref{tauberian 2} to show
that \eqref{NpsirhoDEFB}  is $o(\lambda^{\frac{n+d}{2} -2})$.

A crude but effective approach is to multiply out the inner product and the modulus-square and estimate each 
resulting term separately. 
 Multiplying out the inner product 
$\langle (B + b) \gamma_H \phi_j),\overline{(B + b)e_k} \rangle_{H} $ we obtain four terms. We call the ones
with $(B,B)$, resp.  $(b,b)$ `diagonal terms'  $D_1$ resp. $D_2$ and the ones with mixed $(B,b)$ resp. $(b,B)$,  `off-diagonal terms'
$A_1, $ resp. $A_2$. Then multiplying out $|D_1 + D_2 + A_1 + A_2|^2$ gives the `pure' products $|D_1|^2 + |D_2|^2 + |A_1|^2 + |A_2|^2$
plus the `mixed' products, of which one is  $\Re D_1 \bar{D}_2$, four are of the form $\Re D_k \bar{A}_j$ and one is $\Re A_1 \bar{A}_2$. Using $|\Re a \bar{b} | \leq \half (|a|^2 + |b|^2)$ we can bound all of the mixed products by a universal
constant times the pure products. It therefore suffices to show that the analogue of \eqref{NpsirhoDEFB} 
with summand  $|D_1|^2 + |D_2|^2 + |A_1|^2 + |A_2|^2$ is $o(\lambda^{\frac{n+d}{2} -2})$.
We denote the corresponding sums by,
\begin{equation}\label{FOUR}
N^1_{pure, \psi, \rho, H}(\lambda) = N^1_{|D_1|^2,\psi, \rho, H}(\lambda) + N^1_{|D_2|^2, \psi, \rho, H}(\lambda)
+  N^1_{|A_1|^2\psi, \rho, H}(\lambda) +  N^1_{|A_2|^2,\psi, \rho, H}(\lambda). \end{equation}
Each term is a monotone  non-decreasing temperate function in the sense of hypotheses of Proposition \ref{tauberian 2}.

As mentioned in Section \ref{SIMPLE}, the  asymptotics of the four terms of \eqref{FOUR} can be determined by the same method as for \eqref{NpsirhoDEF}. We express each term as a semi-classical Fourier transform
$\fcal_{t \to \lambda}$ of the corresponding part  (e.g. $S_{|D_1|^2} (t, \psi)$) of of the Kuznecov trace, which we express in terms  of the wave kernels composed with the designated
pseudo-differential operators. The wave front analysis of these four operator traces is the same as in Section \ref{WFSECT}, the only change being
in the formulae for the amplitudes and symbols. 

%The remainder
%estimates are somewhat similar to those of \cite[Proposition 29.1.2]{HoIV}. The $B_T$ terms get credit for  the time %$T$ of where they lack singularities, and the $b$ terms get credit for the volume $\frac{1}{T}$ of the support of the $b_T$
%integrals. 

%We then apply the Tauberian theorem Proposition \ref{tauberian 1} to $\nu^{+}_{BB}, \nu^{+}_{bb} $
%and we apply the Tauberian Theorem Proposition \ref{tauberian 2} to 
% $ \nu^-_{Bb}, \nu^-_{bB}$. \bigskip
 
\subsubsection{The $(B,B)$ term}
By  the assumption on $B$ and the wave
front analysis in Section \ref{WFSECT},  the kernel  $$K_{BB} (s, t, q,q') 
=B_T \gamma_H U(t +s ) \gamma_H^* B_T^* \circ B_T U_H(-s)B_T^*(q,q')$$ is smooth for $0<|t|<T$.  Indeed, 
there do not exist any solutions of  \eqref{GEO1} for $t \not= 0$ in the support of $B_T$ and therefore the only 
solutions are those of \eqref{GEO2}. 
It follows that, as long as $\rm{Supp} \hat{\chi} \subset (-T, T)$,  $ \chi * d  N^1_{|D_1|^2,\psi, \rho, H}(\lambda)   $  has a complete asymptotic expansion as in 
Theorem \ref{main 2} and Proposition \ref{MORESINGS}. 
To employ Theorem \ref{tauberian 2}, we start with a given $\chi \in \scal(\R)$ with $\hat{\chi} $ vanishing near $0$
and supported in $[-T, T]$ and then decompose $1 = B_T + b_T$. Then $S_{BB}(t, \psi)$ is smooth for
$t \in \rm{Supp} \hat{\chi}$. Hence,  $ \chi * d  N^1_{|D_1|^2,\psi, \rho, H}(\lambda) = O(\lambda^{-\infty})  $.

\subsubsection{The $(b,b)$ term}

With $\chi, T$ fixed as above, we next consider   $ \chi * d  N^1_{|D_2|^2,\psi, \rho, H}(\lambda) $. To prove
that this monotone function is $o(\lambda^{\frac{n+d}{2} - 2})$ we use Proposition \ref{tauberian 3} to prove that for any $\epsilon > 0$,  \begin{equation} \label{ADDCONDEP} 
	 \chi * d N_{\psi, H}^1 (\lambda) \leq C \; \epsilon\; \lambda^{\frac{n+d}{2}-1}.
\end{equation}
Indeed, this estimate holds if $\chi$ is replaced by $\rho$ with $\hat{\rho} $ supported in $(-r_0, r_0)$. As mentioned
above, the proofs of the  asymptotic expansion of Theorem \ref{main 2} and of the Kuznecov-Weyl law Theorem \ref{main 1} extend with only minor modifications if we compose with $b(x, D)$. The modification is that the integrand of the principal
term $a^1_0(H, \psi)$ acquires the additional factor of the principal symbol $b_0$ of $b(x, D)$ and is therefore
of order $\frac{1}{T} < \epsilon$. By Proposition \ref{tauberian 3} the same estimate holds general $\chi$.

\end{proof}

 \subsection{$(b,B)$ terms} For such `off-diagonal terms, we move both cutoffs onto the $e_k$ factor of the inner products, as in 
  $$\left| \langle  \gamma_H \phi_j,(b_T)^*\overline{(B_T)e_k} \rangle_{H} \right|^2. $$
As in the case of $(B,B)$ terms, $(b_T)^* (B_T) U_H(s)$ has a smooth kernel for $s \in (0, T)$. Hence, the contributions
of these term is the same as their cutoff to a small interval around $s=0$ multiplied by $\frac{1}{T}$. 

\subsection{Conclusion} It follows that if the expansion of $N^{1}_{\psi, \rho, H}(\lambda)$ is the same as the
expansion in the case where $\rm{Supp} \hat{\rho} \subset (-r_0, r_0)$ plus $\epsilon \lambda^{\frac{n+d}{2}-1}$
plus $\frac{1}{T} \lambda^{\frac{n+d}{2}-1}$. Hence, assuming that the second term in the expansion at $t=0$ vanishes,
$$N^{1}_{\psi, \rho, H}(\lambda) = a_0^1(H, \psi) \lambda^{\frac{n+d}{2}-1} + \epsilon \lambda^{\frac{n+d}{2}-1}$$
for any $\epsilon > 0$, proving the last statement of Theorem \ref{main 1}.
 
\subsection{\label{PFTHEOJUMP} Proof of Corollary  \ref{theo JUMP} } 

\begin{proof}

Theorem \ref{theo JUMP} is a consequence of the remainder estimate of Theorem \ref{main 1}.
To prove Theorem  \ref{theo JUMP} it suffices to prove that, for any $\epsilon > 0$ there exists a test
function $\psi \geq 0, \hat{\psi} \in C_0^{\infty}(\R), \hat{\psi}(0) =1$ with $\rm{Supp} \hat{\psi} \subset (-r_0, r_0)$ and  $\psi \geq \mathbf{1}_{[-\epsilon,\epsilon]}$. Then there exists   a universal constant $C(\epsilon)$
depending only on $(\epsilon, \delta)$  so that for all $\lambda_j$,
\begin{equation} \label{LB} J_{\psi, H}^1(\lambda_j) \geq C(\epsilon) \; J_{\epsilon, H}^{1} (\lambda_j). \end{equation}
Then, 
\[
	\sum_{ k: |\mu_k - c \lambda_j | \leq \epsilon  }    \left| \int_{H} \phi_j \overline{e_k}dV_H \right|^2 \leq \sum_k  \psi(\lambda_j -\mu_k)\left| \int_{H} \phi_j \overline{e_k}dV_H \right|^2,
\] and the upper bound for $ J_{\psi, H}^1(\lambda_j) $ given  in Corollary \ref{theo JUMP} provides the
upper bound for $ J_{\epsilon, H}^{1} (\lambda_j)$. 

The construction of $\psi = \psi_{\epsilon}$ is elementary and we follow the discussion in \cite[Lemma 2.3]{DG75}. 
Let $\hat{\psi} \in \scal(\R)$ with $\hat{\psi} \in C_0^{\infty}$. Replacing $\psi$ by $\psi \cdot \bar{\psi}$ one has $\psi \geq 0, 
\psi(0) > 0$ and $\hat{\psi} \in C_0^{\infty}$. Replacing $\hat{\psi}$ by $\hat{\psi}(\frac{s}{\delta})$ and $\psi$ by $\delta \psi(x \delta)$, and
taking $\delta$ sufficiently small one can assume that $\rm{supp} \hat{\psi} \subset (-r_0, r_0)$ and by multiplying by a
positive scalar we have,  $\psi > 0$ on $[- K, K]$ for any $K > 0$.

\end{proof}

In the aperiodic case, the same argument gives \eqref{JUMPAP}. 

\subsection{\label{SUBPRINCIPAL} Subprincipal term} 

The vanishing of the subprincipal term is a result pertaining to the smooth expansions in Theorem \ref{MORESINGS} and is independent of 
the Tauberian argument. As in \cite{WXZ21},  it follows from the fact that the subprincipal symbol of $\sqrt{\Delta_X}$ vanishes
for any Riemannian manifold $X$, together with some parity arguments from \cite{DG75}. We assume that 
 $\hat{\psi}$ has small support and both $\hat{\psi}$
and $\hat{\rho} $ are even, and that $\hat{\rho} \equiv 1$ near $0$.

We claim that     the subprincipal symbol is  odd, so that  its integrals over cospheres vanishes.
We first note that     the subprincipal symbols of $\sqrt{-\Delta_M} \otimes I$ and of $Q_c$ both vanish.   The homogeneous
part of degree k in  $\sigma_P(x, \xi)$ is even, resp. odd if k is even, resp. odd. By induction with respect to  $r$ it follows that $(\frac{\partial}{\partial t})^r
a_{-j}$ is an even, resp. odd. if $r-j$ is even, resp. odd. 
    The amplitude of
    $e^{it P} e^{i s Q_c}$    is obtained by integrating  $e^{i (t - c s) P_M} \otimes     e^{i s P_H} $. The parities of the terms in the amplitude agree with those of \cite{DG75},
for $s = t = 0$. The restriction of the $M$-amplitudes to $H$ have the same parity. The further restriction  to
    the diagonals in $H \times H$  seems to multiply the amplitudes, but  the subprincipal term can only be obtained as the product of the
    principal symbol and the  subprincipal symbol. Hence it is odd.

%\subsection{\label{TAUBERSECT} Appendix on Tauberian theorems} 

%\begin{corollary}\label{PSIJUMPS2}  With the same notation and assumptions as Theorem \ref{main 2b}, 
%\begin{equation} \label{INNER2} N^c_{j, \psi}(\lambda_j) := \sum_{k}  \psi( \mu_k - c \lambda_j)  \left| \int_{H} \phi_j \overline{e_k}dV_H \right|^2 = o_{\psi} (\lambda_j^{n-2}). \end{equation} 
%\end{corollary} 

  \section{Appendix}

\subsection{\label{BDSECT} Blow-down singularity}

In this section, we  review the definition of a blowdown map  $f: \R^N \to \R^N$, following  \cite[Page 111]{G89}.
$f: X^n \to Y^n$ is a blow-down map with singularity along  a submaifold  $S$ if 
\begin{itemize}
\item $S =  \{x: \det D f(x) = 0\}$ is the critical set of $f$. One assumes that
$d (\det D f(x)) \not= 0\;\; \rm{on} \;\; S$, so that $S$ 
is a  smooth submanifold. 
\item $\ker d_s f \subset T_s S$ for all $s \in S$;
\item $f$ is of constant rank along $S$ and $f^* dV_x$ vanishes to order $n-k$ along $S$.

\end{itemize}

Roughly,  $f |_S :S \to W$ is a fibration over a submanifold $W \subset Y$ of codimension $n- k +1$.
Under these assumptions,  there exist coordinates $x_1, \dots, x_n$ around each $s \in S$ and $y_1, \dots, y_n$ around
$f(s)$ in $Y$ so that $f^* y_j = x_j$ for $j=1, \dots, k$ and $f^* y_i= x_i x_1$ for $i = k +1, \dots, n$.
In this case, $S = \{x_1 = 0\}$ and $W = \{\vec y: y_1 = y_{k+1} = \cdots y_n =0\}. $

We claim that the Lagrange map $\iota_{\Psi}$ of Lemma \ref{LAGLEM} and of the model phase \eqref{PSIMODEL} is a blow-down map. The
 critical point set of \eqref{PSIMODEL} is given by,
$$\begin{array}{ll} \nabla_{y_1, \dots, y_{d-1}}  \Psi_{model} = \vec x \in \R^{d-1},  & \;\;
\nabla_{y_d} \Psi_{model} = -\half (x_1^2 + \cdots + x_{d-1}^2 +
x_{d+1}^2 + \dots + x_n^2) ) = 0, \\ & \\
\nabla_{(x_1, \dots, x_{d-1})} \Psi_{model} = \vec y \in \R^{d-1},  & \; 
\nabla_{(x_{d +1}, \dots, x_n)}\Psi_{model}  = y_d \vec x \in \R^{n-d}. \end{array} $$
The phase variables are $(y_1 \dots, y_{d-1}, x_1, \dots, x_{d-1})$. 
The second equation forces $x_d =1$. 

In the definition of $f$, we let  $$X = \{ (y_d, x_{d+1}, \dots, x_n)\} \simeq \R^{n-d}, \;\;
Y= \{ ((y_1, \dots, y_{d-1}, x_1, \dots, x_{d-1})\} \simeq \R^{2(d-1)}, $$
and define the critical set of the phase by,
\begin{equation} \label{CphiMOD} C_{\Psi_{model}} =\{ (y_1, \dots, y_d; x_1, \dots, x_n) : (x_1, \dots, x_{d-1}) = 0 =  (y_1, \dots, y_{d-1})\}
\subset X \times \R^{2(d-1)}. \end{equation}
Then the associated Lagrange map $\iota_{\Psi_{model}} : C_{\Psi_{model}} \to T^* \R^{n-d}$ is given by,
\begin{equation} \label{iotaPhiMOD} \begin{array}{lll}\\\ &&\\
\;\; \iota_{\Psi_{model}}(\vec 0, y_d, \vec 0, x_{d+1} \dots, x_n) & = &  (\vec 0, y_d, d_{\vec y} \Psi_{model}, \vec 0, (x_{d+1}, \dots, x_n), d_{x''} \Psi_{model}) \\ &&\\ 
& = & (\vec 0, y_d, 0, \vec 0, (x_{d+1}, \dots, x_n), y_d(x_{d+1}, \dots, x_n) ). \end{array} \end{equation}
The image is a (non-homogeneous) Lagrangian submanifold of $T^* \R^{n-d}$, in which the fiber $(x_{d+1}, \dots, x_n)
\in \R^{n-d-1}$
gets blown down to a point when $y_d =0$. In the original model over $T^* H$ and with $\wt \omega = e_d$, the 
set $y_d $ corresponds to the diagonal, and $S^* H$ gets blown down to a point when $s =0$.

\end{document}